\date{}
\newlength{\defbaselineskip}
\long\def\salta#1{\relax}
\numberwithin{equation}{section}
\theoremstyle{plain}
\newtheorem{theorem}{Theorem}[section]
\newtheorem{proposition}[theorem]{Proposition}
\theoremstyle{definition}
\newtheorem{definition}[theorem]{Definition}
\newtheorem{remark}[theorem]{Remark}
\theoremstyle{remark}
\renewcommand{\theequation}{\thesection.\arabic{equation}}
\def\eps{\varepsilon}
\def\dys{\displaystyle}
\def\oeps{\Omega^\eps}
\def\t1p0{T^{1,p}_{0}(\Omega)}
\def\m2{M^{\frac{N(p-1)}{N-1}}(\Omega)}
\def\into{\int_{\Omega}}
\def\w-1p'{W^{-1,p'}(\Omega)}
\def\pw-1p'u{L^{p'}(0,1;W^{-1,p'}(\Omega))}
\def\dys{\displaystyle}
\def\lp'n{(L^{p'}(\Omega))^{N}}
\def\supp{\text{\rm{supp}}}
\def\oeps{\Omega^\eps}
\numberwithin{equation}{section}
\journal{Journal de Math\'ematiques Pures et Appliqu\'ees}
\begin{document}

\begin{frontmatter}



\title{
A semilinear elliptic equation \\ with a mild singularity at $u=0$:\\ existence and homogenization \\ 
------------------------------------ \\
{\it revised version, March 8, 2016} \\
{\it accepted for publication in J. Math. Pures et Appl.} \\
}


\author{Daniela Giachetti}

\address{Dipartimento di Scienze di Base e Applicate per l'Ingegneria \\ Facolt\`a di Ingegneria, Sapienza  Universit\`a di Roma \\ 
 Via Scarpa 16, 00161 Roma, Italy \\ 
 {\tt daniela.giachetti@sbai.uniroma1.it}}

\author{Pedro J. Mart\'inez-Aparicio}

\address{ Departamento de Matem{\'a}tica Aplicada y Estad\'istica \\ 
Universidad Polit{\'e}cnica de Cartagena \\ Paseo Alfonso XIII 52, 30202 Cartagena (Murcia), Spain \\
{\tt pedroj.martinez@upct.es}}

\author{Fran\c cois Murat\corref{corresponding author}}

\address{Laboratoire Jacques-Louis Lions, Universit\'e Pierre et Marie Curie \\  Bo\^ite Courrier 187, 75252 Paris Cedex 05, France \\
{\tt murat@ann.jussieu.fr}}

\begin{abstract}
In this paper we consider singular semilinear elliptic equations whose prototype is the following
\begin{equation*}
\begin{cases}
\displaystyle - div \,A(x) D u  = f(x)g(u)+l(x)& \mbox{in} \; \Omega,\\
u = 0 & \mbox{on} \; \partial \Omega,\\
\end{cases} 
\end{equation*}
where  $\Omega$ is an open bounded set of $\mathbb{R}^N,\, N\geq 1$, $A\in L^\infty(\Omega)^{N\times N}$ is a coercive matrix, $g:[0,+\infty[ \rightarrow [0,+\infty]$ is continuous, and $0\leq g(s)\leq {{1}\over{s^\gamma}}+1$ for every $s>0$, with $0<\gamma\leq 1$ and 
$f, l \in L^r(\Omega)$, $r={{2N}\over{N+2}}$ if  $N\geq 3$, $r>1$ if $N=2$, $r=1$ if $N=1$, $f(x), l(x)\geq 0$ a.e. $x \in \Omega$.

We prove the existence of at least one nonnegative solution as well as a stability result;  
we also prove  uniqueness if $g(s)$ is nonincreasing or ``almost nonincreasing".

Finally, we study the homogenization of these equations posed in a\break sequence of domains $\Omega^\eps$ obtained by removing many small holes from a fixed domain $\Omega$.

\bigskip

\noindent {\bf Titre fran\c cais} 

Une \'equation elliptique semi-lin\'eaire avec une singularit\'e faible en z\'ero : existence et homog\'en\'eisation 

\noindent {\bf R\'esum\'e fran\c cais}

Dans cet article nous \'etudions des \'equations elliptiques semi-lin\'eaires singuli\`eres dont le prototype est le suivant
\begin{equation*}
\begin{cases}
\displaystyle - div \,A(x) D u  = f(x)g(u)+l(x)& \mbox{dans} \; \Omega,\\
u = 0 & \mbox{sur} \; \partial \Omega,\\
\end{cases} 
\end{equation*}
o\`u  $\Omega$ est un ouvert born\'e de $\mathbb{R}^N,\, N\geq 1$, 
$A\in L^\infty(\Omega)^{N\times N}$ est une matrice coercive, 
$g:[0,+\infty[ \rightarrow [0,+\infty]$ est une fonction continue qui v\'erifie\break $0\leq g(s)\leq {{1}\over{s^\gamma}}+1$ pour tout $s>0$, 
avec $0<\gamma\leq 1$ et 
$f, l \in L^r(\Omega)$ avec $r={{2N}\over{N+2}}$ si  $N\geq 3$, $r>1$ si $N=2$ et $r=1$ si $N=1$, $f(x), l(x)\geq 0$\break p.p. $x \in \Omega$.

Nous d\'emontrons l'existence d'au moins une solution positive de ces \'equations et un r\'esultat de stabilit\'e ;  de plus nous d\'emontrons l'unicit\'e de la solution si $g(s)$ est d\'ecroissante ou  ``presque d\'ecroissante".

Nous \'etudions enfin l'homog\'en\'eisation d'une suite de ces \'equations pos\'ees dans des domaines 
$\Omega^\eps$ obtenus en perforant un domaine fixe $\Omega$ par des trous de plus en plus petits et de plus en plus nombreux.

\end{abstract}

\bigskip\bigskip

\begin{keyword}
Semilinear equations \sep singularity at $u = 0$ \sep existence \sep stability \sep uniqueness \sep homogenization

35B25 \sep 35B27 \sep 35J25 \sep 35J67


\end{keyword}

\end{frontmatter}

\section{Introduction}
We deal in this paper with nonnegative solutions to the following singular semilinear problem
\begin{equation}
\label{eqprima}
\begin{cases}
\displaystyle - div\, A(x) D u  = F(x,u) & \mbox{in} \; \Omega,\\
u = 0 & \mbox{on} \; \partial \Omega,\\
\end{cases} 
\end{equation}
where the model for the function $F(x,u)$ is 
\begin{equation*}
F(x,u)= f(x)g(u)+l(x),
\end{equation*}
for some continuous function $g(s)$ with $0\leq g(s)\leq {{1}\over{s^\gamma}}+1$ for every $s>0$, with $0<\gamma\leq 1$, and some nonnegative functions  $f(x)$ and $l(x)$ which belong to suitable Lebesgue spaces.\\
\indent 
Note that (except as far as uniqueness is concerned)  we do not require $g$ to be nonincreasing, so that  functions $g$ like 
$$g(s)= {{1}\over{s^\gamma}}(2+ \sin {{1}\over{s}})$$ 
can be considered. 

In the present paper we are first interested in existence, uniqueness and stability results for this kind of problems. After this, we will study the asymptotic behaviour, as $\eps$ goes to zero, of a sequence of problems posed in domains $\Omega^{\eps}$ obtained by removing many small holes from a fixed domain $\Omega$, in the framework of \cite{CM}.

As far as existence and regularity results for this kind of problems are concerned, we refer to the classical paper \cite{BCR} by M.G.~Crandall, P.H.~Rabinowitz and L.~Tartar, and to the paper \cite{BO} by L.~Boccardo and L.~Orsina which inspired our work. We also refer to the references quoted in these papers as well as 
those quoted in the paper \cite{BC} by L.~Boccardo and J.~Casado-D\'iaz
which deals with the homogenization of this problem for a sequence of\break matrices $A^\eps (x)$.\\
\indent In  \cite{BCR} the authors show the existence of a classical positive solution if the matrix $A(x)$, the boundary $\partial \Omega$ and the function $F(x, s)$ are smooth enough; the function $F(x, s)$, which is not supposed to be nonincreasing in $s$, is bounded from above uniformly for $x\in \overline \Omega$ and $s\geq 1$.
Boundary behaviour of $u(x)$ and $|D u (x)|$ when $x$ tends to $ \partial \Omega$ is also studied.   

In  \cite{BO} the authors study the problem \eqref{eqprima} with $F(x,u)=\frac{f(x)}{u^\gamma},\,\gamma>0$ and $f$ in Lebesgue spaces. They prove existence, uniqueness and regularity results depending  on the values of $\gamma$ and on the summability of $f$. Specifically, they prove the existence of strictly positive distributional solutions. In order to prove their results, they work by approximation and construct an increasing sequence $(u_n)_{n\in \mathbb{N}}$ of solutions to the (nonsingular) problems
\begin{equation*}
\begin{cases}
\displaystyle - div\, A(x) D u_n  = \frac{f_n(x)}{(u_n+\frac 1n)^\gamma} & \mbox{in} \; \Omega,\\
u_n = 0 & \mbox{on} \; \partial \Omega,\\
\end{cases} 
\end{equation*}
where $f_n(x)=\min\{f(x), n\}$.
This sequence satisfies, for every $\omega \subset \subset \Omega$,
\begin {equation} \label {strict}
u_n(x)\geq u_{n-1}(x)\geq...\geq u_1(x) \geq c_\omega >0,\,\, \forall x\in \omega.
\end {equation}
In order to prove this property, it is essential to assume that the nonlinearity $F(x,s)$ is nonincreasing in the $s$ variable and to use, as a main tool, the strong maximum principle. Note that \eqref {strict} provides the existence of a limit function $u=sup_n \,u_n$ which is strictly positive on every compact set $\omega$ of $\Omega$; in addition, \eqref{strict} implies that, on every such set $\omega$, the functions $\frac{f_n(x)}{(u_n+\frac 1n)^\gamma} $ are uniformly dominated by a   function $h_\omega \in L^1(\omega)$.  This allow the authors to prove that the function $u$ is a solution in the sense of distributions.

In the present paper,  we are interested in giving existence and stability results  without assuming that $F(x,s)$ is nonincreasing in the $s$ variable and without using the strong maximum principle in the proofs of these results. The main interest of this lies in the fact that this kind of proofs provides the tools to deal with the homogenization of the problem 
\begin{equation*}
\begin{cases}
\displaystyle - div \,A(x) D u^\eps  = F(x,u^\eps) & \mbox{in} \; \oeps,\\
u^\eps = 0 & \mbox{on} \; \partial \oeps,\\
\end{cases} 
\end{equation*}
when $\oeps$ is obtained by removing many small holes from $\Omega$
(see Theorem~\ref{homogenization}). Of course, the existence and stability results (see Theorem~\ref{EUS} and Theorem~\ref{est}) have also an autonomous interest, due to the more general assumptions and to a different method of proof.
 
Moreover, we point out that this method, which avoids using the strong maximum principle, also has a strong interest in other problems where one cannot expect the strict positivity of the solution on every compact set of $\Omega$.
Let us briefly describe some of these situations.

A first situation is the case of singular parabolic problems with $p\mbox{-laplacian}$ type principal part, $p>1$, and nonnegative data $u_0$ and $f$, whose model is the following
\begin{displaymath}
\left\{ \begin{array}{ll}
\dys u_t-div(|\nabla u|^{p-2}\nabla u)=f(x,t)\left(\frac{1}{u^\gamma}+1\right) & \textrm{in $\Omega\times(0,T)$},\\
u(x,t)=0 & \textrm{on $\partial\Omega\times(0,T)$},\\
u(x,0)=u_0(x) & \textrm{in $\Omega$},
\end{array} \right.
\end{displaymath}
where $\gamma>0$. In this case, due to the assumption $p>1$ and the fact that the initial datum $u_0$ is not assumed to be strictly positive, no method of expansion of positivity can be applied and one cannot guarantee that the solution is strictly positive inside $\Omega\times(0,T)$ (see \cite{BG}).

A second situation deals with existence and homogenization for elliptic singular problems in an open domain $Q$ of $\mathbb{R}^N$ which is made of an upper part $Q_1^\eps$ and a lower part  $Q_2^\eps$ separated by an oscillating interface $\Gamma^\eps$, when the boundary conditions at the interface $\Gamma^\eps$ are the continuity of the flux and the fact that this flux is proportional to the jump of the solution through the interface.
Our method also applies in this case.

A third situation where our method applies is the case of a singular semilinear problem which involves a zeroth-order term whose coefficient is a nonnegative measure  $\mu\in H^{-1}(\Omega)$, namely 
 \begin{equation}
\label{eqprimagen}
\begin{cases}
\dys u\geq 0 & \mbox{in }  \Omega,\\
\displaystyle - div\, A(x) D u+\mu u = F(x,u) & \mbox{in} \; \Omega,\\
u = 0 & \mbox{on} \; \partial \Omega.\\
\end{cases} 
\end{equation}
Problem \eqref{eqprimagen} naturally arises when performing the homogenization of \eqref{eqprima} (where there is no zeroth-order term) posed on a domain $\Omega^\eps$ obtained from $\Omega$ by perforating $\Omega$ by many small holes. Our method allows us to obtain results of existence, stability, uniqueness and homogenization, even if  the strong maximum principle does not hold true in general in such a context (see \cite{GMM1} and \cite{GMM2}).

\bigskip
 \indent In the present paper we consider the case $0<\gamma\leq 1.$ We  consider the case $\gamma>1$ (and more generally the case of a general singularity) in the papers \cite{GMM1} and  \cite{GMM2}
 (see also \cite{GMM3}).
Let us point out that in the latest case, no global energy estimate is available for the solutions when the singularity has a strong behaviour. This makes the problem more difficult, in particular from the point of view of homogenization. For this reason, we have to introduce a convenient (even if rather complicated) framework, in which we prove existence, stability, uniqueness and homogenization results. Let us emphasize that despite the changes which are made necessary by this framework, the method of the present paper provides the guide to follow also in the case of a general singularity.
  
The precise definition of the solution that we use in the present paper is given in Definition~\ref{sol}. Note that the solutions are nonnegative.

The keystone in our proofs is the analysis of the behaviour of the singular terms near the singularity, which is done in Proposition~\ref{prop3}  of Section 6.

 On the other hand, if we suppose that $F(x,s)$ is ``almost nonincreasing" in $s$) (see \eqref{eq0.2}), we  prove the uniqueness of the solution (see Theorem~\ref{uniqueness}).

Let us now come to the homogenization problem
\begin{equation*}
\begin{cases}
\displaystyle - div \,A(x) D u^\eps  = F(x,u^\eps) & \mbox{in} \; \oeps,\\
u^\eps = 0 & \mbox{on} \; \partial \oeps,\\
\end{cases} 
\end{equation*}
when $\oeps$ is obtained by removing many small holes from a given domain $\Omega$ according to the framework of \cite{CM}
(for the study of this problem we have to assume that $N \geq 2$, see Remark~\ref{remN>1} below.)
 The general questions we are concerned with are the following: do the solutions $u^\eps$ converge to a limit $u$ when the parameter $\eps$ tends to zero? If this limit exists, can it be characterized? Will the result be the same result as in the non singular case?
In principle the answer is not obvious at all since, as $\eps$ tends to zero, the number of holes becomes greater and greater and the singular set for the right-hand side (which includes at least the holes' boundary) tends to "invade" the\break entire~$\Omega$. 

Actually we will prove that a strange term appears in the limit of the singular problem in the same way as in the non singular case studied in \cite{CM}. This result is a priori not obvious at all, and a very different behaviour could have been expected. 

\bigskip

We now describe the plan of the paper.
Section~$2$ deals with the precise assumptions on problem \eqref{eqprima}. In Section~$3$ we give the precise definition of a solution to problem \eqref{eqprima} which we will use in the whole of this paper. Section~$4$ is devoted to the statements of the existence, stability and uniqueness results; in addition a regularity result dealing with the boundedness of solutions is stated in this Section. In Section~$5$ we give the statement of the homogenization result in a domain with many small holes and Dirichlet boundary condition as well as a corrector result. In Section~$6$ we prove a priori estimates.  Section~$7$ is devoted to the proofs of the stability, existence and regularity results stated in Theorems~\ref{est} and \ref{EUS}  and in Proposition \ref{lem1}. In Section~$8$ we state and prove a comparison principle and we prove the uniqueness Theorem~\ref{uniqueness}. Finally we prove in Section~$9$ the homogenization Theorem~\ref{homogenization} and the corrector Theorem~\ref{corrector}.

\section{Assumptions}
In this Section, we give the assumptions on problem \eqref{eqprima}.

We assume that $\Omega$ is an open bounded set of $\mathbb{R}^N,\, N\geq 1$ (no regularity is assumed on the boundary $\partial\Omega$ of $\Omega$),  that the matrix $A$ satisfies 
\begin{equation}\label{eq0.0}
\begin{cases}
A \in L^\infty(\Omega)^{N\times N},\\
\exists \alpha>0,\, \,A(x)\geq \alpha I \,\,\,\,\,{\rm a.e.}\,\,  x\in\Omega,
\end{cases}
\end{equation}
and that the function $F$ satisfies
\begin{equation}
\label{car} 
\begin{cases}
F: \Omega\times [0, +\infty[ \rightarrow [0, +\infty] \,\, \text {is a Carath\'eodory function},\\ \mbox{i.e. } F \mbox{ satisfies}\\
i)\, \mbox{for a.e. } x\in\Omega, \, s\in [0,+\infty[\rightarrow F(x,s)\in [0,+\infty] \mbox{ is continuous},\\
ii)\, \forall s\in [0,+\infty[, x\in\Omega\rightarrow F(x,s)\in [0,+\infty] \mbox{ is measurable},
\end{cases}
\end{equation}

\begin{equation}
\label{eq0.1}
\begin{cases}
\exists \gamma, \exists h \mbox{ with }\\
i)\,0<\gamma\leq 1, \\
ii)\, h \in L^r(\Omega),\, r={{2N}\over{N+2}}\,\,\,\text {if}\,\, N\geq 3,\,\, r>1 \,\,\text{if} \,\, N=2,\,\,r=1\,\, \text {if} \,\,N=1,\\
iii)\, h(x)\geq 0 \, \, {\rm a.e.}\,\, x \in \Omega,\\
\mbox{such that}\\
iv)\, \displaystyle 0 \leq F(x,s)\leq h(x)\left({{1}\over{s^\gamma}}+1\right)\, \mbox{a.e. } x\in\Omega, \forall s>0.
\end{cases}
\end{equation}
\begin{remark}
The function $F=F(x,s)$ is a nonnegative Carath\'eodory function with values in $[0,+\infty]$. But, in view of (\ref{eq0.1} {\it iv}), the function $F(x,s)$ can take the value $+\infty$ only when $s=0$ (or, in other terms, $F(x,s)$ is always finite when $s>0$).
\qed
\end{remark}

On the other hand, for proving comparison and uniqueness results, we will assume that $F(x,s)$ is ``{\sl almost nonincreasing}" in $s$: denoting by $\lambda_1$ the first eigenvalue of the operator  $-div  \,{}^s\!A(x) D$ in $H_0^1(\Omega)$, where  $ {}^s\!A(x) =$ $= (A(x) +  {}^t\!A(x)) / 2$ is the symmetrized part of the matrix $A(x)$, we will assume that
\begin{equation}
\label{eq0.2}
\begin{cases}
\mbox{there exists } \lambda\mbox{ with } 0\leq \lambda<\lambda_1 \mbox{ such that}\\
F(x,s)-\lambda s\leq F(x,t)-\lambda t\,\,{\rm a.e.}\,\,\, x \in \Omega,\,\,\forall s,\forall t,\,0\leq t\leq s, 
\end{cases}
\end{equation}
or in other terms that $F (x, s) - \lambda s$ is nonincreasing in $s$ for some $\lambda$ such that $0\leq \lambda<\lambda_1$.

\begin{remark}
Note that \eqref{eq0.2} holds with $\lambda=0$ when $F$ is assumed to be nonincreasing. But if in place of \eqref{eq0.2} one only assumes that  the function
\begin{equation}
\label{unf}
s\in[0,+\infty]\rightarrow F(x,s)-\lambda_1 s\, \mbox{ is nonincreasing,}
\end{equation}
uniqueness of the solution to problem \eqref{eqprima} in general does not hold true, see Remark~\ref{rem82} below.
\qed
\end{remark}

\noindent {{\bf Notation}}

We denote by $\mathcal{D}(\Omega)$ the space of the  $C^\infty(\Omega)$ functions whose support is compact and included in $\Omega$, and by $\mathcal{D}'(\Omega)$ the space of distributions on $\Omega$.

Since $\Omega$ is bounded, $\|D w\|_{L^2(\Omega)^N}$ is a norm equivalent to $\|w\|_{H^1(\Omega)}$ on $H_0^1(\Omega)$. We set 
$$
\|w\|_{H_0^1(\Omega)}=\|D w\|_{L^2(\Omega)^N} ,\quad\forall w\in H_0^1(\Omega).
$$

For every $s\in \mathbb{R}$ and every $k>0$ we define  
$$s^+=\max\{s,0\},\,s^-=\max\{0,-s\},$$
$$T_k(s)=\max\{-k,\min\{s,k\}\},\quad G_k (s)= s- T_k (s).$$

For $l:\Omega\longrightarrow [0,+\infty]$ a measurable function we denote
$$
\{l=0\}=\{x\in\Omega: l(x)=0\},\,\,\{l>0\}=\{x\in\Omega: l(x)>0\}.
$$
\section{Definition of a solution to problem \eqref{eqprima}}
We now give a precise definition of a solution to problem \eqref{eqprima}.
\begin{definition}\label{sol}
Assume that the matrix $A$ and the function $F$ satisfy \eqref{eq0.0}, \eqref{car} and \eqref{eq0.1}. We will say that $u$ is a solution to problem \eqref{eqprima} if $u$ satisfies
\begin{equation}
\label{300}
u\in H^1_0(\Omega),
\end{equation}
\begin{equation}
\label{3000}
u\geq 0  \quad \rm{a.e.\, in}\,\, \Omega,
 \end{equation}
 \begin{equation}\label{L}
\begin{cases}
\forall  \varphi \in H_{0}^{1}(\Omega) \mbox{ with } \varphi \geq 0, \mbox{ one has } \\
\vspace{0.1cm}
\dys\int_\Omega F(x,u) \varphi<+\infty, \\
\dys\int_{\Omega} A(x)DuD\varphi= \int_\Omega F(x,u) \varphi.
\end{cases}
\end{equation}
\qed
\end{definition}

\begin{remark}
\label{32bis}
Given $\varphi \in H_0^1(\Omega)$, one can take $\varphi^+$ and $\varphi^-$ as test functions in  \eqref{L}. This implies that \eqref{L} is actually equivalent to 
\begin{equation}\label{Lbis}
\begin{cases}
\forall  \varphi \in H_{0}^{1}(\Omega), \mbox{ one has } \\ \vspace{0.1 cm}
\dys\int_\Omega F(x,u) | \varphi | <+\infty, \\
\dys\int_{\Omega} A(x)DuD\varphi= \int_\Omega F(x,u) \varphi.
\end{cases}
\end{equation}

This also proves that every solution $u$ to problem \eqref{eqprima} in the sense of Definition~\ref{sol} satisfies
$F(x,u)\varphi \in L^1(\Omega)$ for every $\varphi \in H_0^1(\Omega)$
and that  
$$F(x,u)\in L^1_{\mbox{{\tiny loc}}}(\Omega), \quad -div\, A(x)Du=F(x,u) \mbox{ in } \mathcal{D'}(\Omega).$$ 
\qed
\end{remark}

\begin{remark}
\label{38}
The nonnegative measurable function $F(x,u(x))$ can take infinite values when $u(x)=0$. The integral $\dys \into F(x,u)\varphi$ is therefore correctly defined as a number in $[0,+\infty]$ for every measurable function $\varphi\geq 0$. 

In \eqref{L} we require that this number is finite for every $\varphi\in H^1_0(\Omega)$, $\varphi\geq 0$, when $u$ is a solution to problem \eqref{eqprima} in the sense of Definition~\ref{sol}.  This in particular implies that 
\begin{equation}
\label{32bis2}
F(x,u(x))  \mbox{ is finite almost everywhere on } \Omega,
\end{equation}
or in other terms that 
\begin{equation}
\label{3100}
\mbox{meas}\{x\in\Omega: u(x)=0\mbox{ and }F(x,0)=+\infty\}=0.
\end{equation}

A result  which is stronger than \eqref{3100} will be given in Proposition~\ref{330}, and an even stronger result will be given in Proposition~\ref{331} and Remark~\ref{39}; note however that the strong maximum principle is used
to obtain the results of Proposition 3.5 and Remark 3.7. 
\qed
\end{remark}
\begin{proposition}
\label{330}
Assume that the matrix $A$ and the function $F$ satisfy \eqref{eq0.0}, \eqref{car} and \eqref{eq0.1}. Then every solution $u$ to problem \eqref{eqprima} in the sense of Definition~\ref{sol} satisfies 
\begin{equation}
\label{314}
F(x,0)=0 \quad  \mbox{for a.e.}\,\,x\in\{u=0\}
\end{equation}
 and 
 \begin{equation}
 \label{313bis}
\into F(x,u)\varphi=\int_{\{u>0\}} F(x,u)\varphi\quad \forall \varphi\in H^1_0(\Omega).
 \end{equation}
\end{proposition}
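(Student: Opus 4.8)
The plan is to establish \eqref{314} first; the identity \eqref{313bis} will then follow in one line. To prove \eqref{314} the idea is to test the equation in \eqref{L} with a function that concentrates on the set $\{u=0\}$ as a parameter $\delta>0$ tends to zero. Given $\phi\in\mathcal D(\Omega)$ with $\phi\geq 0$, I would choose
\[
\varphi=\frac{(\delta-u)^+}{\delta}\,\phi .
\]
Since $u\in H^1_0(\Omega)$, the factor $(\delta-u)^+/\delta$ belongs to $H^1(\Omega)\cap L^\infty(\Omega)$, and multiplying it by $\phi$, whose support is a compact subset of $\Omega$, produces an element $\varphi\in H^1_0(\Omega)$ with $\varphi\geq 0$. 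Thus $\varphi$ is admissible in \eqref{L}, and $F(x,u)\varphi\in L^1(\Omega)$ by Remark~\ref{32bis}.

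Next I would insert $\varphi$ into \eqref{L}. Using $D\varphi=\frac{(\delta-u)^+}{\delta}D\phi-\frac{\phi}{\delta}\,\chi_{\{0<u<\delta\}}\,Du$ a.e.\ (recall that $Du=0$ a.e.\ on $\{u=0\}$), one gets
\[
\int_\Omega F(x,u)\,\frac{(\delta-u)^+}{\delta}\,\phi
=\int_\Omega \frac{(\delta-u)^+}{\delta}\,A(x)DuD\phi-\frac1\delta\int_{\{0<u<\delta\}}\phi\,A(x)DuDu .
\]
By the coercivity of $A$ in \eqref{eq0.0} and the sign of $\phi$, the last term is nonnegative, so it can be dropped to leave the one-sided estimate
\[
\int_\Omega F(x,u)\,\frac{(\delta-u)^+}{\delta}\,\phi\;\leq\;\int_\Omega \frac{(\delta-u)^+}{\delta}\,A(x)DuD\phi .
\]

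Then I would let $\delta\to 0^+$. On the right-hand side $(\delta-u)^+/\delta\to\chi_{\{u=0\}}$ a.e., with $\big|\frac{(\delta-u)^+}{\delta}A(x)DuD\phi\big|\leq|A(x)DuD\phi|\in L^1(\Omega)$, so dominated convergence yields the limit $\int_{\{u=0\}}A(x)DuD\phi$, which is $0$ because $Du=0$ a.e.\ on $\{u=0\}$. On the left-hand side the integrands are nonnegative and converge a.e.\ to $F(x,0)\,\chi_{\{u=0\}}\,\phi$ (they are constantly equal to this on $\{u=0\}$ and vanish on $\{u>0\}$ for $\delta$ small), so Fatou's lemma gives $\int_{\{u=0\}}F(x,0)\,\phi\leq 0$. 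Since $\phi\geq 0$ is an arbitrary element of $\mathcal D(\Omega)$ and $F(x,0)\geq 0$, this forces $F(x,0)=0$ a.e.\ on $\{u=0\}$, i.e.\ \eqref{314}. Finally, for any $\varphi\in H^1_0(\Omega)$ we have $F(x,u)|\varphi|\in L^1(\Omega)$ by Remark~\ref{32bis}, so we may split $\int_\Omega F(x,u)\varphi=\int_{\{u>0\}}F(x,u)\varphi+\int_{\{u=0\}}F(x,u)\varphi$; the second integral vanishes because $F(x,u)=F(x,0)=0$ a.e.\ on $\{u=0\}$ by \eqref{314}, which is exactly \eqref{313bis}.

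The verification that $\varphi\in H^1_0(\Omega)$ and the two limit passages are routine. The genuinely structural step --- the one I expect to be the crux --- is the observation that the term $\frac1\delta\int_{\{0<u<\delta\}}\phi\,A(x)DuDu$ coming from differentiating the truncation has a favourable sign and may simply be discarded: this is what lets the argument close using only coercivity and Fatou's lemma, with no recourse to the strong maximum principle.
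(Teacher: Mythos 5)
Your proof is correct and follows essentially the same route as the paper: the paper obtains the identical one-sided estimate by taking $T_k(\varphi)Z_\delta(u)$ as test function (Propositions~\ref{prop3} and \ref{prop69}), where the cutoff $Z_\delta$ plays exactly the role of your $(\delta-u)^+/\delta$, discards the nonnegative term $\frac{1}{\delta}\int_{\{\delta<u<2\delta\}}A(x)DuDu\,T_k(\varphi)$ by coercivity, and lets $\delta\to0$ using $Du=0$ a.e.\ on $\{u=0\}$ --- precisely the structural step you single out. The only cosmetic differences are that you conclude the pointwise statement \eqref{314} directly with test functions $\phi\in\mathcal{D}(\Omega)$ (which, being bounded, spares you the truncation $T_k$ the paper needs for general nonnegative $\varphi\in H^1_0(\Omega)$), whereas the paper first proves $\int_{\{u=0\}}F(x,u)\varphi=0$ for all such $\varphi$ and then reformulates it as \eqref{314}.
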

\begin{proof}
In Proposition~\ref{prop69} below we prove that for every $u$ solution to problem \eqref{eqprima} in the sense of Definition~\ref{sol} one has
\begin{equation}
\label{313}
\int_{\{u=0\}}F(x,u)\varphi=0\quad \forall \varphi\in H^1_0(\Omega),\varphi\geq 0,
\end{equation}
which of course implies \eqref{313bis}.

Writing  
$$
\{u=0\}=\Big(\!\{u=0\} \cap \{F(x,0)=0\}\!\Big)\cup \Big(\!\{u=0\} \cap \{0<F(x,0)\leq +\infty\}\!\Big)
$$
implies that \eqref{313} is equivalent to 
\begin{equation*}
\int_{\{u=0\}\cap\{0<F(x,0)\leq +\infty\}}F(x,u)\varphi=0\quad \forall  \varphi\in H^1_0(\Omega),\,\varphi\geq 0.
\end{equation*}
The latest assertion is equivalent to 
$$
\mbox{meas}\{x\in\Omega:u(x)=0\mbox{ and }0<F(x,0)\leq +\infty\}=0,
$$
which is equivalent to \eqref{314}. Proposition~\ref{330} is therefore proved.

Note that \eqref{314} is also equivalent to 
\begin{equation}
\label{316}
\begin{cases}
\{x\in\Omega:u(x)=0\}\subset\{x\in\Omega:F(x,0)=0\}\\
\mbox{except for a set of zero measure},
\end{cases}
\end{equation}
and also equivalent to
\begin{equation*}
\begin{cases}
\{x\in\Omega:0<F(x,0)\leq+\infty\}\subset\{x\in\Omega:u(x)>0\}\\
\mbox{except for a set of zero measure}.
\end{cases}
\end{equation*}
\end{proof}

The following Proposition~\ref{331} and Remark~\ref{39} assert that for every solution $u$ to problem \eqref{eqprima} in the sense of Definition~\ref{sol}, we can have two possibilities: either $u(x)>0$ a.e. in $\Omega$ or $u\equiv 0$ in $\Omega$. 

This assertion is stronger than \eqref{316}, but its proof uses the strong maximum principle. 

As pointed out in the Introduction, the strong maximum principle is one of the key tools used in the proofs of the results obtained in \cite{BO} by L.~Boccardo and L.~Orsina, results which inspired the present paper.

Note that, in contrast with the proofs of the results in \cite{BO}, the proofs of all the results in the present paper do not make use neither of the strong maximum principle nor of the results of Proposition~\ref{331} and Remark~\ref{39} below.

\begin{proposition}
\label{331}
Assume that the matrix $A$ and the function $F$ satisfy \eqref{eq0.0}, \eqref{car} and \eqref{eq0.1}. Then every solution $u$ to problem \eqref{eqprima} in the sense of Definition~\ref{sol} satisfies 
\begin{equation}
\label{3200}
\mbox{either } u\equiv0 \mbox{ or meas}\{x\in\Omega:u(x)=0\}=0.
\end{equation}
\end{proposition}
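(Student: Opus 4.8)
The plan is to prove a dichotomy: either $u\equiv 0$ or $u>0$ almost everywhere in $\Omega$. I would proceed by contradiction, assuming that neither alternative holds, so that both the set $\{u=0\}$ and the set $\{u>0\}$ have strictly positive measure. The key observation, already available from Proposition~\ref{330}, is that on the set $\{u=0\}$ one has $F(x,0)=0$ a.e., and more generally \eqref{313bis} holds: the right-hand side of the equation is supported (up to a null set) on $\{u>0\}$. I would then exploit the fact that $u$ satisfies, in the sense of distributions, $-\,div\,A(x)Du = F(x,u)\geq 0$ in $\Omega$, i.e.\ $u$ is a nonnegative supersolution of a linear elliptic operator in divergence form with bounded measurable coefficients.

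The heart of the argument is the strong maximum principle in the form due to Stampacchia (valid for operators $-\,div\,A(x)D\,\cdot$ with $A\in L^\infty$ coercive): if $u\in H^1_0(\Omega)$, $u\geq 0$, and $-\,div\,A(x)Du\geq 0$ in $\mathcal D'(\Omega)$, then on each connected component of $\Omega$ either $u\equiv 0$ or $u>0$ a.e. Concretely, I would first reduce to the case that $\Omega$ is connected (treating each connected component separately, which is harmless for the statement since \eqref{3200} can be checked component by component; if $u$ vanishes identically on one component and is positive a.e.\ on another, then globally $\mbox{meas}\{u=0\}=0$ fails only if some component contributes a set of positive measure where $u=0$, and on that component the dichotomy forces $u\equiv 0$ there — one has to be slightly careful here, but since a solution that is $\equiv 0$ on some components and $>0$ a.e.\ on others still has $\mbox{meas}\{u=0\}>0$, one should actually argue that if $u\not\equiv 0$ globally then on every component where $u\not\equiv 0$ we get $u>0$ a.e., and then handle the components where $u\equiv 0$). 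The cleanest route is: suppose $u\not\equiv 0$; I must show $\mbox{meas}\{u=0\}=0$. On any connected component $\omega$ of $\Omega$ where $u$ is not identically zero, the strong maximum principle applied to the supersolution $u|_\omega$ gives $u>0$ a.e.\ in $\omega$. It remains to rule out the existence of a component on which $u\equiv 0$ while $u\not\equiv 0$ elsewhere; this is where I would use that $u\in H^1_0(\Omega)$ and that $\Omega$ is merely open bounded — in fact the dichotomy as stated only claims the global alternative, so if some component carries $u\equiv 0$ and another carries $u>0$, then strictly speaking \eqref{3200} would be false. I expect the resolution is that the authors implicitly assume $\Omega$ connected, or that a further argument (e.g.\ via the equation and the sign of $F$) shows the zero set of a nontrivial solution has measure zero regardless; I would look for a unique continuation type input or simply restrict to connected $\Omega$.

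Assuming $\Omega$ connected, the proof is then short: $u\in H^1_0(\Omega)$, $u\geq 0$, and $-\,div\,A(x)Du = F(x,u)\geq 0$ in $\mathcal D'(\Omega)$ by Remark~\ref{32bis}; apply the strong maximum principle of Stampacchia (or Trudinger's Harnack inequality for such operators) to conclude that either $u\equiv 0$ in $\Omega$ or $u>0$ a.e.\ in $\Omega$, which is exactly \eqref{3200}. The only subtlety requiring care in the write-up is the precise statement and applicability of the strong maximum principle for $H^1$ supersolutions with $L^\infty$ coefficients and no regularity on $\partial\Omega$; this is standard (it is local and does not see the boundary), so I would cite it rather than reprove it.

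The main obstacle I anticipate is not the analytic content — the strong maximum principle does all the work — but the bookkeeping when $\Omega$ is disconnected: making the global dichotomy \eqref{3200} precise in that case, since a solution vanishing on one component and positive on another would be a counterexample to the literal statement unless one argues such a configuration cannot occur or restricts to connected domains. I would flag this and, if needed, note that the intended reading is per connected component, or invoke that $F$ and the finiteness condition in \eqref{L} force the zero set to be negligible once $u\not\equiv 0$.
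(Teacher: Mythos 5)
Your proposal is correct and follows essentially the same route as the paper: the paper also treats $u$ as a nonnegative $H^1$ supersolution of $-\,div\,A(x)Du\geq 0$ and invokes the strong maximum principle (Theorem~8.19 of \cite{GT}), concluding via the alternative ``either $\inf_B u>0$ for every ball $B\subset\subset\Omega$, or $\inf_B u=0$ for some ball, in which case $u\equiv 0$''. Your caveat about disconnected $\Omega$ is a fair observation (the cited theorem is stated for connected domains, and the paper leaves this implicit), but it does not change the substance of the argument.
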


\begin{proof}
 We first recall the statement of the strong maximum principle Theorem~8.19 of \cite{GT}, or more exactly of its variant where $u$ is replaced by $-u$. In this variant, Theorem~8.19 of \cite{GT} becomes
\begin{align}
\label{am}
\begin{cases}
\mbox{Let } u\in W^{1,2}(\Omega) \mbox{ which satisfies } Lu\leq 0.\\
\mbox{If for some ball } B\subset\subset \Omega \mbox{ we have } \inf_{B}u=\inf_{\Omega}u\leq 0,\\\mbox{then } u \mbox{ is constant  in } \Omega.
\end{cases}
\end{align}
In our situation one has $Lu =div\, A(x)Du$, and $L u\leq 0$  is nothing but  $-div\, A(x)Du\geq 0$. Therefore \eqref{am} implies the following result
\begin{align}
\label{bm}
\begin{cases}
\mbox{Let } u\in H^1(\Omega) \mbox{ which satisfies } -div\, A(x)Du\geq 0 \mbox{ in } \mathcal{D}'(\Omega).\\
\mbox{If } u\geq 0 \mbox{ a.e. in } \Omega\mbox{ and if } \inf_{B}u=0 \mbox{ for some ball } B\subset\subset \Omega,\\
\mbox{then } u=0 \mbox{ in } \Omega,
\end{cases}
\end{align}
since when $u$ is a constant in $\Omega$ with $\inf_B u=0$, then $u=0$ in $\Omega$.

But one has the alternative:
\begin{align*}
\begin{cases}
\mbox{either } \inf_B u>0 \mbox{ for every ball } B\subset\subset \Omega,\\
\mbox{or there exists a ball } B\subset\subset \Omega \mbox{ such that }  \inf_{B} u=0.
\end{cases}
\end{align*}
In the first case, one has meas$\{x\in\Omega:u(x)=0\}=0$; in the second case, \eqref{bm} implies that $u\equiv0.$

This proves \eqref{3200}.
\end{proof}
\begin{remark}
Actually the proof of Proposition~\ref{331} (which uses the strong maximum principle) provides a result which is much stronger than \eqref{3200},
namely
\begin{equation}
\label{32003200}
\begin{cases}
\mbox{ either } u\equiv0, \\
\mbox{ or for every ball } B \subset\subset \Omega \mbox{ one has } \\ 
\,  \inf_{B}u \geq c(u, B) \mbox{ for some } c(u, B) \in \mathbb{R},  \, c(u, B) > 0.
\end{cases}
\end{equation}

Since the strong maximum principle continues to hold if the operator $-div\, A(x)Du$
is replaced by  $-div\, A(x)Du+a_0 u$, with $a_0~\in~L^\infty(\Omega)$, $a_0~\geq~0$,
both \eqref{3200} and \eqref{32003200} continue to hold for such an operator.

But when $a_0\geq 0$ does not belong to $L^\infty(\Omega)$ and is only a nonnegative element of $H^{-1}(\Omega)$
(this can be the case in the result of the homogenization process with many small holes that we will perform in Section~\ref{sechom}), 
the strong maximum principle does not hold anymore for the operator $-div\, A(x)Du\, +$ $+\, a_0 u$ (see \cite{GMM1} for a counter-example due to G. Dal Maso), 
and therefore \eqref{32003200} does not hold anymore for such an operator.
\qed
\end{remark}
\begin{remark}
\label{39}
If $u\equiv 0$ is a solution to problem \eqref{eqprima} in the sense of Definition~\ref{sol}, then Proposition~\ref{330} implies that $F(x,0) = 0$ for almost every $x \in \Omega$.

Conversely, if $F(x,0)\not\equiv 0$, $u\equiv 0$ is not a solution to problem \eqref{eqprima} in the sense of Definition~\ref{sol}, and Proposition~\ref{331} (or more exactly \eqref{32003200}) then implies that
$$
u(x)>0\mbox{ a.e. } x\in\Omega. 
$$
\qed
\end{remark}

\section{Statements of the existence, stability, uniqueness and regularity results}

In this Section we state results of existence, stability and uniqueness of the solution to problem \eqref{eqprima} in the sense of Definition~\ref{sol}. We also state a result (Proposition~\ref{lem1}) which provides the boundedness of the solutions under a regularity assumption on the datum $h$. 
\bigskip 

\begin{theorem}{\bf(Existence)}. 
  \label{EUS}
Assume that the matrix $A$ and the function $F$ satisfy \eqref{eq0.0}, \eqref{car} and \eqref{eq0.1}. Then there exists at least one solution $u$ to  problem \eqref{eqprima} in the sense of Definition~\ref{sol}. 
\end{theorem}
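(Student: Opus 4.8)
The plan is to prove existence by approximation, following the classical scheme but \emph{without} relying on the strong maximum principle (so that the argument survives the perforated-domain setting). First I would truncate both the singularity and the datum: for $n\in\mathbb N$ set $F_n(x,s)=\min\{F(x,s),n\}$ (or, if one prefers to mimic \cite{BO}, replace $s^{-\gamma}$ by $(s+\tfrac1n)^{-\gamma}$ and truncate the datum $h$ by $h_n=\min\{h,n\}$). In either case $F_n$ is a bounded Carath\'eodory function, $0\le F_n(x,s)\le n$, and $F_n$ still satisfies the growth bound \eqref{eq0.1} with the same $\gamma$ and the same $h$. By Schauder's fixed point theorem (applied to the map $v\mapsto u$ where $-\,\mathrm{div}\,A(x)Du=F_n(x,v)$ in $H_0^1(\Omega)$, compactly from $L^2$ to $L^2$, bounded invariant ball coming from the coercivity of $A$ and $F_n\in L^\infty$) one obtains, for each $n$, a nonnegative $u_n\in H_0^1(\Omega)$ solving
\begin{equation*}
\int_\Omega A(x)Du_n D\varphi=\int_\Omega F_n(x,u_n)\varphi\qquad\forall\varphi\in H_0^1(\Omega).
\end{equation*}
Nonnegativity of $u_n$ follows by testing with $u_n^-$ and using $F_n\ge 0$.

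The second step is the key a priori estimate, which is exactly where the growth restriction $0<\gamma\le1$ and the choice $r=\tfrac{2N}{N+2}$ are used, and which I expect to be the main technical point. Using the bound $F_n(x,u_n)\le h(x)(u_n^{-\gamma}+1)$ one cannot test directly with $u_n$ when $\gamma>0$; instead I would test with $u_n$ and estimate $\int_\Omega h\,u_n^{1-\gamma}$. Since $1-\gamma\ge 0$, by H\"older (exponents $r'=\tfrac{2N}{N-2}=2^\ast$ on the $u_n^{1-\gamma}$ factor when $\gamma=0$, and an interpolation in between for $0<\gamma<1$) and Sobolev's inequality $\|u_n\|_{2^\ast}\le C\|Du_n\|_{2}$, one gets
\begin{equation*}
\alpha\|Du_n\|_2^2\le \int_\Omega h\,u_n^{1-\gamma}+\int_\Omega h\,u_n\le C\|h\|_{r}\,\|Du_n\|_2^{1-\gamma}+C\|h\|_{r}\,\|Du_n\|_2 .
\end{equation*}
Because $1-\gamma<1\le 2$ (strictly, since $\gamma>0$) and $1<2$, Young's inequality absorbs both right-hand terms into $\tfrac\alpha2\|Du_n\|_2^2$ and yields a bound on $\|u_n\|_{H_0^1(\Omega)}$ \emph{uniform in} $n$. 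This is precisely the ``global energy estimate'' mentioned in the Introduction as unavailable when $\gamma>1$. (For $N=1,2$ one uses the corresponding Sobolev embeddings and $r>1$ analogously.)

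The third step is passage to the limit. From the uniform $H_0^1$ bound, extract $u_n\rightharpoonup u$ weakly in $H_0^1(\Omega)$, strongly in $L^2(\Omega)$ and a.e. in $\Omega$, with $u\ge 0$. Fix $\varphi\in H_0^1(\Omega)$, $\varphi\ge0$. The left-hand side passes to the limit by weak convergence. For the right-hand side, $F_n(x,u_n)\to F(x,u)$ a.e. on $\{u>0\}$ by continuity of $F(x,\cdot)$; the delicate point is equi-integrability of $F_n(x,u_n)\varphi$ and control near $\{u=0\}$, and here I would invoke Proposition~\ref{prop3} of Section~6 (the analysis of the singular terms near the singularity, announced in the Introduction as ``the keystone''), which provides exactly the estimate that forces $\int_\Omega F_n(x,u_n)\varphi$ to converge to $\int_\Omega F(x,u)\varphi<+\infty$ and shows that no mass concentrates on $\{u=0\}$. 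Fatou's lemma on $\{u>0\}$ combined with this concentration-compactness-type control gives both $\int_\Omega F(x,u)\varphi<+\infty$ and the identity $\int_\Omega A Du D\varphi=\int_\Omega F(x,u)\varphi$, so $u$ is a solution in the sense of Definition~\ref{sol}. The monotone-approximation variant (with $(u_n)$ increasing, as in \cite{BO}) would give the a.e. convergence for free but requires $F$ nonincreasing, which we do not assume; hence the reliance on the quantitative Proposition~\ref{prop3} rather than on monotonicity is the crux of the non-monotone argument, and the main obstacle to write carefully.
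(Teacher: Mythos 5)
Your proposal is correct and follows essentially the same route as the paper: truncation $T_n(F(x,u^+))$ plus Schauder for the approximate problems, the uniform $H_0^1$ bound of Proposition~\ref{lem2} obtained by testing with $u_n$ and handling $\int_\Omega h\,u_n^{1-\gamma}$, and a limit passage whose control near $\{u=0\}$ rests on Proposition~\ref{prop3}, with Fatou and dominated convergence away from the singular set. The only organizational difference is that the paper packages your third step as the separately stated stability Theorem~\ref{est} (splitting $\{u_n\le\delta\}$ and $\{u_n>\delta\}$, excluding a countable set of bad $\delta$, and upgrading to strong $H_0^1$ convergence via Vitali), and then deduces existence in a few lines.
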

The proof of Theorem~\ref{EUS} will be done in Section~\ref{proofexistence}.
It is based on a stability result (see Theorem~\ref{est} below), and on a priori estimates of  $\|u\|_{H^1_0(\Omega)}$ and of $\dys \int_{\{u\leq \delta\}}F(x,u)v$ for every $v\in H_{0}^{1}(\Omega) $, $v\geq 0$ (see Propositions~\ref{lem2} and \ref{prop3} below);
these a priori estimates
 are satisfied by every solution $u$ to problem \eqref{eqprima} in the sense of Definition~\ref{sol}. 

  \begin{theorem}{\bf(Stability)}.
\label{est}
 Assume that the matrix $A$ satisfies assumption \eqref{eq0.0}. Let $F_n$ be a sequence of functions and $F_\infty$ be a function which satisfy assumptions
\eqref{car} and \eqref{eq0.1} for the same $\gamma$ and $h$. Assume moreover that 
\begin{equation}
 \label{num1}
 \mbox{ a.e. } x\in \Omega, \, F_n(x,s_n)\rightarrow\! F_\infty(x,s_\infty) \mbox{ if } s_n\rightarrow s_{\infty}, s_n\geq 0,s_\infty\geq0.
 \end{equation} 
Let $u_n$ be any solution to problem \eqref{eqprima}$_n$ in the sense of Definition~\ref{sol}, where \eqref{eqprima}$_n$ is the problem \eqref{eqprima} with $F(x,u)$ replaced by $F_n(x,u_n)$. 

Then there exists a subsequence, still labelled by $n$, and a function $u_\infty$, which is a solution to problem  \eqref{eqprima}$_\infty$ in the sense of Definition~\ref{sol}, such that
\begin{equation}
\label{num2}
 u_n\rightarrow  u_{\infty} \, \mbox{ in } H^1_0(\Omega)\, \mbox{strongly}.\\
\end{equation}
\end{theorem}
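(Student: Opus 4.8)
The plan is to obtain a priori bounds on the sequence $u_n$ in $H_0^1(\Omega)$ that are uniform in $n$, extract a weakly convergent subsequence, identify the limit as a solution of problem \eqref{eqprima}$_\infty$, and finally upgrade weak convergence to strong convergence. First I would test equation \eqref{eqprima}$_n$ with $u_n$ itself (which is a legitimate nonnegative test function). Using coercivity \eqref{eq0.0} on the left and the bound \eqref{eq0.1}$iv)$ on the right, one gets
\begin{equation*}
\alpha \|D u_n\|_{L^2(\Omega)^N}^2 \le \into F_n(x,u_n) u_n \le \into h(x)\Big(\frac{1}{u_n^\gamma}+1\Big) u_n = \into h(x)\big(u_n^{1-\gamma}+u_n\big).
\end{equation*}
Since $0<\gamma\le 1$ we have $u_n^{1-\gamma}\le u_n+1$, so the right-hand side is controlled by $\into h(u_n+ 1+u_n)\le C\|h\|_{L^r}\|u_n\|_{L^{r'}}+C\|h\|_{L^1}$. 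Because $r=\tfrac{2N}{N+2}$ is the dual Sobolev exponent, $r'=2^\ast$, and Sobolev's inequality gives $\|u_n\|_{L^{r'}}\le C_S\|D u_n\|_{L^2}$; for $N=1,2$ one argues similarly with the relevant embeddings. Hence $\alpha\|D u_n\|_{L^2}^2 \le C\|D u_n\|_{L^2}+C$, which yields a uniform bound $\|u_n\|_{H_0^1(\Omega)}\le M$. (This is exactly the a priori estimate of Proposition~\ref{lem2} that the theorem refers to.)

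Next, extract a subsequence with $u_n\rightharpoonup u_\infty$ weakly in $H_0^1(\Omega)$, strongly in $L^q(\Omega)$ for every $q<2^\ast$, and a.e.\ in $\Omega$; clearly $u_\infty\ge 0$ a.e. The delicate point — and the main obstacle — is to pass to the limit in the right-hand side $\into F_n(x,u_n)\varphi$, because $F_n(x,u_n)$ may blow up where $u_n\to 0$, and a.e.\ convergence $F_n(x,u_n)\to F_\infty(x,u_\infty)$ (which follows from \eqref{num1} and the a.e.\ convergence of $u_n$, being careful about the set $\{u_\infty=0\}$) is not by itself enough to pass to the limit in the integral. The resolution is the uniform estimate on the singular part near $\{u=0\}$, namely Proposition~\ref{prop3}: for every $v\in H_0^1(\Omega)$, $v\ge 0$, and every $\delta>0$,
\begin{equation*}
\int_{\{u_n\le\delta\}} F_n(x,u_n)\, v \le \omega(\delta),
\end{equation*}
with $\omega(\delta)\to 0$ as $\delta\to 0$, uniformly in $n$. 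This provides the equi-integrability needed: splitting $\into F_n(x,u_n)\varphi$ into the region $\{u_n\le\delta\}$ and $\{u_n>\delta\}$, the first is small uniformly in $n$ by Proposition~\ref{prop3}, while on $\{u_n>\delta\}$ the integrand is dominated (using \eqref{eq0.1}$iv)$) by $h(x)(\delta^{-\gamma}+1)|\varphi|\in L^1(\Omega)$, so dominated convergence applies there. Letting $\delta\to 0$ one concludes $\into F_n(x,u_n)\varphi\to\into F_\infty(x,u_\infty)\varphi<+\infty$ for every $\varphi\in H_0^1(\Omega)$, $\varphi\ge0$, hence $u_\infty$ satisfies \eqref{L} — here one must check that passing to the limit in the linear left-hand side $\into A D u_n D\varphi$ is automatic from weak convergence. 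Thus $u_\infty$ is a solution to \eqref{eqprima}$_\infty$ in the sense of Definition~\ref{sol}.

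Finally, to prove \eqref{num2}, I would test \eqref{eqprima}$_n$ with $u_n$ and \eqref{eqprima}$_\infty$ (written for $u_\infty$) with $u_\infty$, and compare. A cleaner route: test \eqref{eqprima}$_n$ with $u_n$ to get $\into A D u_n D u_n = \into F_n(x,u_n) u_n$, and show $\into F_n(x,u_n)u_n \to \into F_\infty(x,u_\infty)u_\infty = \into A D u_\infty D u_\infty$. The convergence of the right-hand sides is obtained by the very same splitting argument as above, now with the test "function'' $u_n$ itself: on $\{u_n\le\delta\}$ use $\int_{\{u_n\le\delta\}}F_n(x,u_n)u_n\le \delta\int_{\{u_n\le\delta\}}F_n(x,u_n)\cdot\frac{1}{?}$ — more carefully, bound $F_n(x,u_n)u_n\le h(x)(u_n^{1-\gamma}+u_n)$ which is equi-integrable since $u_n$ is bounded in $L^{2^\ast}$ and $h\in L^r$, so one may directly apply Vitali's theorem together with the a.e.\ convergence $F_n(x,u_n)u_n\to F_\infty(x,u_\infty)u_\infty$. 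Consequently $\limsup_n \into A D u_n D u_n = \into A D u_\infty D u_\infty$; combined with weak lower semicontinuity of the coercive quadratic form $w\mapsto \into A Dw Dw$ and weak convergence $D u_n\rightharpoonup D u_\infty$, this forces $\into A D u_n D u_n \to \into A D u_\infty D u_\infty$, and then coercivity gives $\|D u_n - D u_\infty\|_{L^2}^2\le \tfrac1\alpha\into A D(u_n-u_\infty)D(u_n-u_\infty)\to 0$, i.e.\ $u_n\to u_\infty$ strongly in $H_0^1(\Omega)$. The main obstacle throughout is the control of the singular term near $\{u=0\}$, which is precisely why Proposition~\ref{prop3} is the keystone announced in the Introduction.
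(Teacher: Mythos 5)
Your overall strategy is the one the paper follows: the a priori $H_0^1(\Omega)$ bound of Proposition~\ref{lem2}, extraction of a weakly and a.e.\ convergent subsequence, a splitting of $\into F_n(x,u_n)\varphi$ at the level $\delta$ with Proposition~\ref{prop3} controlling $\{u_n\le\delta\}$ and dominated convergence on $\{u_n>\delta\}$, and finally convergence of the energies via Vitali's theorem to upgrade to the strong convergence \eqref{num2}. There is, however, a genuine gap at the identification step. Even carried out correctly, your splitting only yields
\begin{equation*}
\into A(x)Du_\infty D\varphi=\int_{\{u_\infty>0\}}F_\infty(x,u_\infty)\varphi
\qquad\forall\varphi\in H_0^1(\Omega),\ \varphi\ge0,
\end{equation*}
because the dominated-convergence step on $\{u_n>\delta\}$ produces $\int_{\{u_\infty>\delta\}}F_\infty(x,u_\infty)\varphi$ (and only for $\delta$ outside the at most countable set where $\mbox{meas}\{x\in\Omega:u_\infty(x)=\delta\}>0$, a restriction you do not mention), and letting $\delta\to0$ gives the integral over $\{u_\infty>0\}$, not over $\Omega$. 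Definition~\ref{sol} requires the full integral $\into F_\infty(x,u_\infty)\varphi$, and since $F_\infty(x,0)$ may be strictly positive, or even $+\infty$, on $\{u_\infty=0\}$, you must prove that $\int_{\{u_\infty=0\}}F_\infty(x,u_\infty)\varphi=0$; you cannot invoke Proposition~\ref{330} for this, since $u_\infty$ is not yet known to be a solution. The paper devotes its whole third step to exactly this point, applying Fatou's lemma to $\int_{\{u_\infty=0\}}F_n(x,u_n)\chi_{\{u_n\le\delta\}}\varphi\le\int_{\{u_n\le\delta\}}F_n(x,u_n)\varphi$ (see \eqref{74bis}) and combining with \eqref{76} to obtain \eqref{74ter}. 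In your set-up the gap can also be closed by one application of Fatou's lemma to the full integrals, namely $\into F_\infty(x,u_\infty)\varphi\le\liminf_n\into F_n(x,u_n)\varphi=\int_{\{u_\infty>0\}}F_\infty(x,u_\infty)\varphi\le\into F_\infty(x,u_\infty)\varphi$, which forces equality; but some such argument must appear, and it is precisely the heart of the matter.

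A second, more minor, inaccuracy: Proposition~\ref{prop3} does not provide a bound $\int_{\{u_n\le\delta\}}F_n(x,u_n)\,v\le\omega(\delta)$ with $\omega(\delta)\to0$ uniformly in $n$; it gives the $n$-dependent bound $\into A(x)Du_nD v\,Z_\delta(u_n)$ (see \eqref{5701bis}). The correct use, as in the paper, is to pass to the limit in $n$ first (strong $L^2(\Omega)^N$ convergence of $D\varphi\,Z_\delta(u_n)$ against weak convergence of $Du_n$), obtaining $\limsup_n\int_{\{u_n\le\delta\}}F_n(x,u_n)\varphi\le\into A(x)Du_\infty D\varphi\,Z_\delta(u_\infty)$, and then to send $\delta\to0$ using $Du_\infty=0$ a.e.\ on $\{u_\infty=0\}$; this limsup version is all your argument needs. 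With these two points repaired, your proof coincides with the paper's, including the final equi-integrability/Vitali argument which gives $\into A(x)Du_nDu_n\to\into A(x)Du_\infty Du_\infty$ and hence \eqref{num2}.
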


\vfill\eject

In the following Proposition we state a regularity result.

\begin{proposition}{\bf (Boundedness)}.
\label{lem1}
Assume that the matrix $A$ and the function $F$ satisfy \eqref{eq0.0}, \eqref{car} and \eqref{eq0.1}. Assume moreover that the function $h$ which appears in \eqref{eq0.1} satisfies
\begin{equation}
\label{nuch}
h \in L^t(\Omega),\, t>{{N}\over{2}}\,\,\,\text {if}\,\, N\geq 2,\,\,t=1\,\, \text {if} \,\,N=1.
\end{equation}

Then every $u$ solution to problem \eqref{eqprima} in the sense of Definition~\ref{sol} belongs to $L^\infty(\Omega)$ and satisfies the estimate
\begin{equation}
\label{num10}
\|u\|_{L^\infty(\Omega)}\leq 1+\frac{2}{\alpha} \, C(|\Omega|, N, t) \, \|h\|_{L^t(\Omega)},
\end{equation}
where the  constant $C(|\Omega, N, t)$  depends only on $|\Omega|$, $N$ and $t$ and  is nondecreasing in $|\Omega|$.
\end{proposition}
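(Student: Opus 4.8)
The plan is to use a classical Stampacchia-type truncation argument, taking $G_k(u)$ as test function in the weak formulation \eqref{L} for levels $k\geq 1$, and exploiting the fact that on the set $\{u>k\}\subset\{u>1\}$ the singular bound \eqref{eq0.1}\emph{iv}) gives $F(x,u)\leq h(x)\big(\frac{1}{u^\gamma}+1\big)\leq 2h(x)$, since $u>1$ forces $\frac1{u^\gamma}<1$. First I would fix $k\geq 1$ and observe that $\varphi=G_k(u)$ is an admissible nonnegative test function in $H_0^1(\Omega)$ because $u\in H_0^1(\Omega)$ (this uses \eqref{300} and $G_k(s)=0$ for $s\leq k$, $G_k$ Lipschitz). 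Plugging it in and using coercivity \eqref{eq0.0} together with $D G_k(u)=Du\,\chi_{\{u>k\}}$ and $DuDG_k(u)=|DG_k(u)|^2$, one obtains
\begin{equation*}
\alpha\int_\Omega |D G_k(u)|^2 \leq \int_\Omega A(x)Du\, DG_k(u) = \int_\Omega F(x,u)\, G_k(u) \leq 2\int_{\{u>k\}} h(x)\, G_k(u).
\end{equation*}

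Next I would run the standard Stampacchia iteration. Writing $A_k=\{u>k\}$ and using Sobolev's inequality $\|G_k(u)\|_{L^{2^*}(\Omega)}\leq \mathcal{S}_N\|DG_k(u)\|_{L^2(\Omega)^N}$ (for $N\geq 3$; the cases $N=2$ and $N=1$ are handled by the analogous embeddings into every $L^q$ and into $L^\infty$ respectively), together with Hölder's inequality on the right-hand side with the exponents $2^*$, $t$ (recall $t>N/2$), and the remaining conjugate exponent, one bounds $\int_{A_k} h\, G_k(u)$ by $\|h\|_{L^t(\Omega)}\,\|G_k(u)\|_{L^{2^*}(\Omega)}\,|A_k|^{\theta}$ for some $\theta=1-\frac1t-\frac1{2^*}>0$, the positivity of $\theta$ being exactly where the hypothesis $t>N/2$ is used. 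Combining with the energy inequality yields $\|G_k(u)\|_{L^{2^*}(\Omega)}\leq C\,\|h\|_{L^t(\Omega)}\,|A_k|^{\theta}$, and then for $h>k\geq 1$, since $(h-k)|A_h|^{1/2^*}\leq \|G_k(u)\|_{L^{2^*}(\Omega)}$, one gets the recursive inequality
\begin{equation*}
|A_h| \leq \frac{\big(C\,\|h\|_{L^t(\Omega)}\big)^{2^*}}{(h-k)^{2^*}}\, |A_k|^{\theta 2^*},
\end{equation*}
with $\theta 2^*>1$. Stampacchia's lemma (e.g. Lemma 4.1 in Stampacchia's work, or Kinderlehrer–Stampacchia) then gives $|A_{1+d}|=0$ where $d$ is proportional to $C\,\|h\|_{L^t(\Omega)}\,|A_1|^{\theta-1/2^*}$; bounding $|A_1|\leq|\Omega|$ gives $\|u\|_{L^\infty(\Omega)}\leq 1+d$ with $d\leq \frac{2}{\alpha}C(|\Omega|,N,t)\|h\|_{L^t(\Omega)}$, which is \eqref{num10}. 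Tracking the constants carefully — the factor $2$ from $F\leq 2h$ on $\{u>1\}$, the factor $1/\alpha$ from coercivity, and the Sobolev/measure constants absorbed into $C(|\Omega|,N,t)$ — one checks the stated monotonicity of $C$ in $|\Omega|$, since enlarging $\Omega$ only enlarges the relevant measure factors.

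The main obstacle is not conceptual — the argument is textbook Stampacchia — but rather bookkeeping: I must make sure the test function $G_k(u)$ is legitimate (it is, by \eqref{Lbis} and the remark that $\int_\Omega F(x,u)|\varphi|<+\infty$ for $\varphi\in H_0^1(\Omega)$, so the right-hand side integral is finite a priori, which is what makes the first displayed inequality rigorous rather than merely formal), and I must treat the starting level $k=1$ rather than $k=0$, precisely so that the singular term $\frac1{u^\gamma}$ stays bounded by $1$ on the super-level sets; this is the reason the constant $1+\cdots$ rather than a pure multiple of $\|h\|_{L^t}$ appears in \eqref{num10}. The low-dimensional cases $N=1,2$ require the obvious modifications of the Sobolev embedding (in $N=1$, $H_0^1(\Omega)\hookrightarrow L^\infty(\Omega)$ directly, and the conclusion is even more immediate with $t=1$), which I would dispatch briefly at the end.
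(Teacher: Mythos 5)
Your proposal is correct and follows essentially the same route as the paper: test with $G_k(u)$ for levels $k\geq 1$, use that $u>1$ turns the singular bound into $F(x,u)\leq 2h(x)$, and conclude by Stampacchia's truncation/iteration argument under $t>N/2$, which is exactly why the estimate has the form $1+\frac{2}{\alpha}C(|\Omega|,N,t)\|h\|_{L^t(\Omega)}$. The only cosmetic difference is that the paper shifts levels by setting $\overline{u}=G_1(u)$ and cites Stampacchia's lemma for $\overline{u}$, whereas you carry out the level-set iteration explicitly starting at $k=1$.
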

 
  Finally, our uniqueness result is a consequence of the comparison principle stated in Theorem~\ref{prop0} below. Note that these two results are the only results where the ``almost nonincreasing"  character in $s$ of the function $F(x,s)$ is used in the present paper.
 
 \begin{theorem}{\bf(Uniqueness)}. 
 \label{uniqueness}
 Assume that the matrix $A$ and the function $F$ satisfy \eqref{eq0.0}, \eqref{car} and \eqref{eq0.1}.  Assume moreover that the function $F$ also satisfies assumption \eqref{eq0.2}. Then the solution to problem \eqref{eqprima} in the sense of Definition~\ref{sol} is unique.
 
 \end{theorem}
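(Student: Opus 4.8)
The plan is to deduce Theorem~\ref{uniqueness} directly from the comparison principle of Theorem~\ref{prop0}. That comparison principle asserts (under \eqref{eq0.0}, \eqref{car}, \eqref{eq0.1} and \eqref{eq0.2}) that a subsolution of \eqref{eqprima} lies a.e.\ below a supersolution; since every solution to \eqref{eqprima} in the sense of Definition~\ref{sol} is simultaneously a subsolution and a supersolution, it suffices to apply Theorem~\ref{prop0} to two given solutions $u_1$ and $u_2$, once with $(u_1,u_2)$ playing the roles of (subsolution, supersolution) and once with the roles exchanged, to obtain $u_1\le u_2$ a.e.\ and $u_2\le u_1$ a.e., hence $u_1=u_2$. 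So the entire content of the uniqueness statement is carried by Theorem~\ref{prop0}, and the rest of this sketch describes the mechanism behind it.

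The core computation is the following. Given two solutions $u_1,u_2$, note that $(u_1-u_2)^+\in H^1_0(\Omega)$ with $(u_1-u_2)^+\ge 0$, so it is an admissible test function in \eqref{L} for both equations, and the very definition of a solution guarantees that $\int_\Omega F(x,u_i)(u_1-u_2)^+<+\infty$ for $i=1,2$. Subtracting the two weak formulations and using that $D(u_1-u_2)\cdot D(u_1-u_2)^+=|D(u_1-u_2)^+|^2$ a.e.\ (and that the antisymmetric part of $A$ contributes nothing) gives
\begin{equation*}
\int_\Omega {}^s\!A(x)\,D(u_1-u_2)^+\cdot D(u_1-u_2)^+=\int_{\{u_1>u_2\}}\big(F(x,u_1)-F(x,u_2)\big)(u_1-u_2)^+.
\end{equation*}
On $\{u_1>u_2\}$ one has $u_1(x)>0$, so $F(x,u_1)$ is finite a.e.\ there by \eqref{eq0.1}; moreover $\{u_1>u_2\}\cap\{u_2=0\}\subset\{u_2=0\}$, on which Proposition~\ref{330} yields $F(x,0)=0$ a.e., so $F(x,u_2)$ is finite a.e.\ on $\{u_1>u_2\}$ as well. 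Hence assumption \eqref{eq0.2}, applied with $s=u_1(x)\ge t=u_2(x)$, gives $F(x,u_1)-F(x,u_2)\le\lambda(u_1-u_2)$ a.e.\ on $\{u_1>u_2\}$, and the right-hand side of the display is at most $\lambda\int_\Omega|(u_1-u_2)^+|^2$.

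Finally the variational characterization of $\lambda_1$ gives $\int_\Omega {}^s\!A(x)\,Dw\cdot Dw\ge\lambda_1\int_\Omega w^2$ for every $w\in H^1_0(\Omega)$; taking $w=(u_1-u_2)^+$ and combining with the previous bound yields $(\lambda_1-\lambda)\int_\Omega|(u_1-u_2)^+|^2\le 0$, and since $\lambda_1-\lambda>0$ by \eqref{eq0.2} we conclude $(u_1-u_2)^+=0$ a.e., i.e.\ $u_1\le u_2$ a.e.; exchanging $u_1$ and $u_2$ gives $u_1=u_2$. The only delicate point — and the sole place where \eqref{eq0.2} is used — is the manipulation of the singular term $F(x,u_1)-F(x,u_2)$ on $\{u_1>u_2\}$: one must know that both values are finite there so that their difference is meaningful, and that the products $F(x,u_i)(u_1-u_2)^+$ are integrable, which is precisely what Definition~\ref{sol} together with Proposition~\ref{330} supplies. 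As usual in this paper, no use is made of the strong maximum principle or of Proposition~\ref{331}; in the setting of Theorem~\ref{prop0} the same computation is run with $u_1$ a subsolution and $u_2$ a supersolution, which affects only the direction of the inequalities in an inconsequential way.
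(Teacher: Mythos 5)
Your proposal is correct and follows essentially the same route as the paper: uniqueness is reduced to the comparison principle of Theorem~\ref{prop0}, whose proof tests with $(u_1-u_2)^+$, uses the a.e.\ finiteness of $F(x,u_i(x))$ to make the difference of the singular terms meaningful, applies the almost-nonincreasing condition \eqref{eq0.2} on $\{u_1>u_2\}$, and concludes via the variational characterization of the first eigenvalue $\lambda_1$ of $-\div\,{}^s\!A(x)D$ together with $\lambda<\lambda_1$. The only cosmetic differences (invoking Proposition~\ref{330} on $\{u_2=0\}$ instead of \eqref{32bis2}, and subtracting the weak formulations directly rather than first adding $-\lambda\into u_i(u_1-u_2)^+$ to both sides) do not change the argument.
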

 \begin{remark}
 When assumptions \eqref{eq0.0}, \eqref{car}, \eqref{eq0.1} as well as \eqref{eq0.2} hold true, Theorems~\ref{EUS}, \ref{est} and \ref{uniqueness} together  assert that problem \eqref{eqprima} is well posed in the sense of Hadamard in the framework of Definition~\ref{sol}.
 \qed
 \end{remark}

 \section{Statement of the homogenization result  in a domain with many small holes and Dirichlet boundary condition}
 \label{sechom}
 In this Section we deal with the asymptotic behaviour, as $\eps$ tends to zero, of nonnegative solutions to the singular semilinear problem
\begin{equation}
\tag{{\theequation}$^\eps$}
\label{eqprimah}
\begin{cases}
\displaystyle - div \,A(x) D u^\eps  = F(x,u^\eps) & \mbox{in} \; \oeps,\\
u^\eps = 0 & \mbox{on} \; \partial \oeps,\\
\end{cases} 
\end{equation}
where $u^\eps$ satisfies the homogenous Dirichlet boundary condition on the whole of the boundary of $\oeps$, when  $\Omega^\eps$ is a perforated domain obtained by  removing many small holes from a given open bounded set  $\Omega$ in $\mathbb{R}^N, N \geq 2$, with a repartition of those many small holes producing a ``strange term" when $\eps$ tends to $0$.

We begin by describing in Subsection~\ref{51} the geometry of the perforated domains and the framework introduced in \cite{CM} (see also \cite{DG} and \cite{MH}) for this problem when the right-hand side is in $L^2(\Omega)$. We then state in Subsection~\ref{52} the homogenization and corrector results for the singular semilinear problem \eqref{eqprimah}; the proofs of these results will be given in Section~$9$. 

As above we consider in this Section a given matrix $A$ which satisfies \eqref{eq0.0}
and a given function $F$ which satisfies \eqref{car} and \eqref{eq0.1}.
But in this Section, as well as in Section~$9$, we assume that
\begin{equation} \label{N>1}
N \geq 2,
\end{equation}
see Remark~\ref{remN>1} below.

 \subsection {\bf The perforated domains}
\label{51}

 Let $\Omega$ be an open and bounded set of $\mathbb{R}^N$ ($N\geq 2$) and let us perforate it by holes: we obtain an open set $\Omega^\varepsilon$. More precisely, consider for every $\eps$, where $\eps$  takes its values in a sequence of positive numbers which tends to zero, some closed sets $T_i^\eps$ of $\mathbb{R}^N$,  $1\leq i \leq n(\eps)$, which are the holes. The domain $\Omega^\eps$ is defined by removing the holes $T_i^\eps$ from $\Omega$, that is 
\begin{equation*}
\Omega^\eps=\Omega- \bigcup_{i=1}^{n(\eps)}T_i^\eps.
\end{equation*}

 We suppose that the sequence of domains $\oeps$ is such that there exist a sequence of functions $w^\eps$, a distribution $\mu\in\mathcal{D}'(\Omega)$ and two sequences of distributions $\mu^\eps\in\mathcal{D}'(\Omega)$ and $\lambda^\eps\in\mathcal{D}'(\Omega)$ such that
\begin{equation} \label{cond1}
w^\eps\in H^1(\Omega)\cap L^\infty(\Omega),
\end{equation}
\begin{equation} \label{cond1bis}
0\leq w^\eps\leq 1\mbox{ a.e. } x\in\Omega,
\end{equation}
\begin{equation}
\label{cond2}
\forall \psi\in H_0^1(\Omega)\cap L^\infty(\Omega),\,  w^\eps\psi\in H_0^1(\Omega^\eps),
\end{equation}
\begin{equation}
\label{cond3}
w^\eps\rightharpoonup 1\text{  in } H^1(\Omega) \mbox{ weakly, in } L^\infty(\Omega) \mbox{ weakly-star and a.e. in } \Omega, 
\end{equation}
\begin{equation}
\label{cond4}
\mu\in H^{-1}(\Omega),
\end{equation}
\vfill\eject
\begin{align}
\label{cond5}
\begin{cases}
\dys-div \, {}^t\!A(x)Dw^\eps=\mu^\eps-\lambda^\eps\mbox{ in } \mathcal{D}'(\Omega),\\
\mbox{with } \mu^\eps\in H^{-1}(\Omega),\, \lambda^\eps\in H^{-1}(\Omega),\\
\mu^\eps\geq 0 \mbox{ in } \mathcal{D}'(\Omega),\\
\dys \mu^\eps\rightarrow \mu\mbox{ in } H^{-1}(\Omega)\mbox{ strongly},\\
\dys\langle \lambda^\eps,\tilde{z^\eps}\rangle_{H^{-1}(\Omega),H_0^1(\Omega)}=0\,\,\, \forall z^\eps\in H_0^1(\Omega^\eps),
\end{cases}
\end{align}
where, as well as everywhere in the present paper, for every function $z^\eps$ in $L^2(\Omega)$, we define $\tilde{z^\eps}$ as the extension by $0$ of $z^\eps$ to $\Omega$, namely by
\begin{equation}
\label{defex}
\dys\tilde{z^\eps}(x)=\begin{cases}
z^\eps(x) & \text{in}\ \Omega^\eps,\\ 
\dys 0 &\text{in } \dys \bigcup_{i=1}^{n(\eps)}T_i^\eps \, ;
 \end{cases}
\end{equation}
then $\tilde{z^\eps}\in L^2(\Omega)$ and $\|\tilde{z^\eps}\|_{L^2(\Omega)}=\|z^\eps\|_{L^2(\oeps)}$.
Moreover  
\begin{equation}
\begin{cases}
\label{57bis}
\dys\mbox{if } z^\eps\in H_0^1(\oeps),\mbox{ then } \tilde{z^\eps}\in H_0^1(\Omega)\\
\dys\mbox{with } \widetilde{D z^\eps}=D\tilde{z^\eps}\mbox{ and } \|\tilde{z^\eps}\|_{H_0^1(\Omega)}=\|z^\eps\|_{H_0^1(\oeps)}.
\end{cases}
\end{equation}

The meaning of assumption \eqref{cond2} is that
\begin{equation}
\label{57ter}
w^\eps=0 \text{ on } \bigcup_{i=1}^{n(\eps)}T_i^\eps,
\end{equation}
while the meaning of the last statement of \eqref{cond5} is that the distribution $\lambda^\eps$ only acts on the holes $T_i^\eps$, $i=1,\cdots,n(\eps)$, since taking test functions in 
$\mathcal{D}(\oeps)$ in the first statement of \eqref{cond5} implies that
\begin{equation*}
\displaystyle -div\, {}^t\!A(x) D w^\eps  = \mu^\eps\mbox{ in } \mathcal{D}'(\oeps).
\end{equation*}

\bigskip
Taking $z^\eps=w^\eps \phi$, with $\phi\in\mathcal{D}(\Omega)$, $\phi\geq 0$, as test function in \eqref{cond5} we have
$$
\into \phi A(x)Dw^\eps Dw^\eps+\into w^\eps A(x)D w^\eps D\phi=\langle\mu^\eps,w^\eps\phi\rangle_{H^{-1}(\Omega),H_0^1(\Omega)},
$$
from which using \eqref{cond3} and  the fourth statement of \eqref{cond5} we easily deduce that
$$
\into \phi A(x)Dw^\eps Dw^\eps\rightarrow \langle\mu,\phi\rangle_{H^{-1}(\Omega),H_0^1(\Omega)}\,\, \forall\phi\in\mathcal{D}(\Omega),\,\phi\geq0,
$$
and therefore that
\begin{equation*}
\mu\geq 0.
\end{equation*}
The distribution $\mu\in H^{-1}(\Omega)$ is therefore also a nonnegative Radon measure. Moreover, since 
\begin{align*}
\begin{cases}
\forall\phi\in \mathcal{D}(\Omega),\phi\geq 0,\\
\dys\into \phi d\mu=\langle\mu,\phi\rangle_{H^{-1}(\Omega),H_0^1(\Omega)}= \lim_\eps \into \phi A(x)Dw^\eps D w^\eps\leq C\|\phi\|_{L^\infty(\Omega)}, 
\end{cases}
\end{align*}
the measure $\mu$  satisfies $\dys\into d\mu \leq C < + \infty$.
\medskip

It is then (well) known$\;$\footnote{\quad the reader who would not enter in this theory could continue reading the present paper assuming in \eqref{cond4} that $\mu$ is a function of $L^r(\Omega)$ (with $r=(2^*)'$ if $N\geq 3$, $r>1$ if $N=2$, and $r=1$ if $N=1$) and not only an element of $H^{-1}(\Omega)$ \label{footnote1}} 
(see e.g. \cite{DMu} Section~1 and \cite{DMu2} Subsection~2.2 for more details) that if $z\in H_0^1(\Omega)$, then $z$ (or more exactly its quasi-continuous representative for the $H_0^1(\Omega)$ capacity) satisfies
\begin{equation}
\label{57bis2}
z\in L^1(\Omega;d\mu)\,\mbox{ with }\,\langle\mu,z\rangle_{H^{-1}(\Omega),H_0^1(\Omega)}=\into z\,d\mu\,;
\end{equation}
moreover if $z\in H_0^1(\Omega)\cap L^\infty(\Omega)$, then $z$ satisfies
\begin{equation}
\label{58bis}
z\in L^\infty(\Omega;d\mu)\, \mbox{ with }\, \|z\|_{L^\infty(\Omega;d\mu)}=\|z\|_{L^\infty(\Omega)};
\end{equation}
therefore when $z\in H_0^1(\Omega)\cap L^\infty(\Omega)$, then $z$ belongs to $L^1(\Omega;d\mu)\cap L^\infty(\Omega;d\mu)$ and therefore to $L^p(\Omega;d\mu)$ for every $p$, $1\leq p\leq +\infty$.

When one assumes that the holes $T_i^\eps$, $i=1,\cdots,n(\eps)$, are such that the assumptions \eqref{cond1}, \eqref{cond1bis}, \eqref{cond2}, \eqref{cond3}, \eqref{cond4} and \eqref{cond5} hold true, then (see \cite{CM} or \cite{MH}, or \cite{DG} for a more general framework) for every $f\in L^2(\Omega)$, the (unique) solution $y^\eps$ to the linear problem
\begin{equation}
\label{521}
\begin{cases}
y^\eps\in H_0^1(\oeps),\\
-div\, A(x)Dy^\eps=f\mbox{ in } \mathcal{D}'(\oeps),
\end{cases}
\end{equation}
satisfies 
\begin{equation*}
\tilde{y^\eps}\rightharpoonup y^0\mbox{ in } H_0^1(\Omega),
\end{equation*}
where $y^0$ is the (unique) solution to 
\begin{equation*}
\begin{cases}
\dys y^0\in H_0^1(\Omega)\cap L^2(\Omega;d\mu),\\
\dys -div\, A(x)Dy^0+\mu y^0=f \mbox{ in } \mathcal{D}'(\Omega),
\end{cases}
\end{equation*}
or equivalently to
\begin{equation}
\label{523}
\begin{cases}
\dys y^0\in H_0^1(\Omega)\cap L^2(\Omega;d\mu),\\
\dys\into A(x)Dy^0Dz+\into y^0 z\,d\mu=\into fz\,\,\, \forall z\in H_0^1(\Omega)\cap L^2(\Omega;d\mu) ;
\end{cases}
\end{equation}
in \eqref{523} a ``strange term" $\mu y^0$,appears; this term is in some sense the asymptotic memory of the fact that $\tilde{y^\eps}$ was zero on the holes.
\bigskip 

The prototype of the examples where assumptions \eqref{cond1}, \eqref{cond1bis},  \eqref{cond2}, \eqref{cond3}, \eqref{cond4} and \eqref{cond5} are satisfied 
is the case where the matrix $A(x)$ is the identity (and therefore where the operator is $-div\, A(x)D=-\Delta$), where $\Omega\subset\mathbb{R}^N$, $N\geq 2$, where the holes $T_i^\eps$ are balls of radius $r^\eps$ (or more generally sets obtained by an homothety of ratio $r^\eps$ from a given bounded closed set\break $T\subset \mathbb{R}^N$) with $r^\eps$ given by
\begin{equation*}
\begin{cases}
r^\eps=C_0\,  \eps^{N/(N-2)}\,\mbox{ if } N\geq 3,\\
r\dys^\eps=\exp ( -\frac{C_0}{\eps^2}) \, \mbox{ if } N= 2,
\end{cases}
\end{equation*}
which are periodically distributed on a $N$-dimensional  lattice of cubes of size $2\eps$, and where the measure $\mu$ is given by 
\begin{equation*}
\begin{cases}
\mu=\dys\frac{S_{N-1}(N-2)}{2^N}C_0^{N-2}\,\mbox{ if } N\geq 3,\\
\mu=\dys\frac{2\pi}{4}\frac{1}{C_0}\, \mbox{ if } N= 2,
\end{cases}
\end{equation*}
(see e.g. \cite{CM} and \cite{MH} for more details, and for other examples, in particular for the case where the holes are distributed on a manifold).

\begin{remark}\label{remN>1}
In this Remark we  prove that in dimension $N = 1$, 
there is no sequence $w^\eps$ which satisfies  \eqref{cond1}, \eqref{cond2} and \eqref{cond3} 
whenever for every $\eps$ there exists at least one hole $T^\eps_i$ with $T^\eps_i \cap \overline \Omega \neq \emptyset$. 
This is the reason why we assume in this Section, as well as in Section~$9$, that $N \geq 2$ (see \eqref{N>1}).

Indeed let $x$ be any point in $\overline \Omega$, and let $z^\eps$ be any point in $T^\eps_i \cap \overline \Omega$.
Assume that there exists a sequence $w^\eps$ which satisfies  \eqref{cond1}, \eqref{cond2} and \eqref{cond3}, 
and let $M > 0$ be such that $\| w^\eps \|_{H^1 (\Omega)}Ê\leq M$ for every $\eps$. 
Since $N =1$, one has $H^1 (\Omega) \subset C^{0, 1/2} (\overline \Omega)$, and since $w^\eps (z^\eps) = 0$, one has
\begin{equation}
\label{holder}
\begin{cases}
\dys | w^\eps (x) | =  | w^\eps (x) - w^\eps (z^\eps) | \leq \| w^\eps \|_{C^{0, 1/2} (\overline \Omega)} \,Ê| x - z^\eps |^{1/2} \leq \\
\dys \leq C \, \| w^\eps \|_{H^1 (\Omega)} \,Ê| x - z^\eps |^{1/2} \leq C MÊ\, | x - z^\eps |^{1/2}.
\end{cases}
\end{equation}
Since there exists a subsequence, still denoted by $\eps$, such that the points $z^\eps$ converge to some point $z \in \overline \Omega$,
passing to the limit in \eqref{holder} gives in view  of \eqref{cond3}
\begin{equation*}
1  \leq C  MÊ\, | x - z |^{1/2} ,
\end{equation*}
which is a contradiction when $x = z$.
\qed
\end{remark}

\subsection {\bf The homogenization result for the singular semilinear  problem (\ref{eqprimah})}
\label{52}
 
 The existence Theorem~\ref{EUS} asserts that when the matrix $A$ and the function $F$ satisfy assumptions \eqref{eq0.0}, \eqref{car} and \eqref{eq0.1}, then for given $\eps>0$, the singular semilinear problem \eqref{eqprimah} posed on $\oeps$ has at least one solution $u^\eps$ in the sense of Definition~\ref{sol} (this solution is moreover unique if the function $F(x,s)$ also satisfies assumption \eqref{eq0.2}).
 
 The following Theorem asserts that the result of the homogenization process for the singular problem \eqref{eqprimah} is very similar to the homogenization process for the linear problem \eqref{521}.

\begin{theorem}
\label{homogenization}
Assume that $N \geq 2$ and that the matrix $A$ and the function $F$ satisfy \eqref{eq0.0}, \eqref{car} and \eqref{eq0.1}. 
Assume also that the sequence of perforated domains $\Omega^\eps$ is such that  \eqref{cond1}, \eqref{cond1bis}, \eqref{cond2}, \eqref{cond3}, \eqref{cond4} and \eqref{cond5} hold true.
Finally let $u^\eps$ be a solution to problem \eqref{eqprimah} in the sense of Definition~\ref{sol}, namely
\begin{equation}
\label{sol1h}
\begin{cases}
i)\, u^\eps\in H^1_{0}(\oeps),\\
ii)\, u^\eps(x)\geq 0 \,\,\mbox{a.e. } x\in\oeps,\\
\end{cases}
\end{equation}
\begin{align}
\label{Leps}
\begin{cases}
\forall  \varphi^\eps \in H_{0}^{1}(\Omega^\eps) \mbox{ with } \varphi^\eps \geq 0, \mbox{ one has }  \\ \vspace{0.1 cm}
\dys\int_{\oeps} F(x,u^\eps) \varphi^\eps<+\infty,\\
\dys\int_{\oeps} A(x)Du^\eps D\varphi^\eps= \int_{\oeps} F(x,u^\eps) \varphi^\eps.
\end{cases}
\end{align}

Then there exists a subsequence, still labelled by $\eps$, such that for this subsequence 
one has, for $\tilde{u^\eps}$ defined by \eqref{defex},
\begin{equation}
\label{512bis}
 \tilde{u}^\eps\rightharpoonup  u^0 \, \mbox{ in } H^1_0(\Omega)\, \mbox{weakly, }\\
\end{equation}
where $u^0$ is a solution to
\begin{equation}\label{sol1h0}
\begin{cases}
i)\, u^0\in H^1_{0}(\Omega)\cap L^2(\Omega;d\mu),\\
ii)\, u^0(x)\geq 0 \,\,\mbox{a.e. } x\in\Omega,\\
\end{cases}
\end{equation}
\begin{equation}\label{L_0}
\begin{cases}
\forall  z\in H_{0}^{1}(\Omega)\cap L^2(\Omega;d\mu)   \mbox{ with } z \geq 0, \mbox{ one has }  \\ \vspace{0.1 cm}
\dys\int_\Omega F(x,u^0) z<+\infty, \\
\dys\int_{\Omega} A(x)Du^0 Dz+ \int_\Omega u^0 z d\mu= \int_{\Omega} F(x,u^0) \psi.
\end{cases}
\end{equation}

\end{theorem}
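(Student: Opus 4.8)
The strategy is the classical Cioranescu--Murat scheme, combined with the singular a~priori estimates of Section~6, carried out on truncations so as to handle the singularity near $\{u^\eps=0\}$. First I would establish a uniform $H^1_0$ bound: taking $\varphi^\eps=u^\eps$ as test function in \eqref{Leps}, using \eqref{eq0.0}, \eqref{eq0.1} and the fact (from Proposition~\ref{lem2} of Section~6, applied on $\oeps$) that $\|u^\eps\|_{H^1_0(\oeps)}$ is bounded independently of $\eps$ — the point being that the constant in that a~priori estimate depends only on $\alpha$, $\gamma$, $\|h\|_{L^r}$ and $|\Omega|$, hence is uniform in $\eps$ since $\Omega^\eps\subset\Omega$. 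Consequently $\tilde u^\eps$ is bounded in $H^1_0(\Omega)$ by \eqref{57bis}, and along a subsequence $\tilde u^\eps\rightharpoonup u^0$ in $H^1_0(\Omega)$ weakly, a.e. in $\Omega$, and (after a further subsequence and a diagonal argument) $F(x,u^\eps)\to$ some limit in a weak sense on sets where $u^\eps$ is bounded below. I would also record, via Proposition~\ref{prop3}, the uniform smallness estimate for $\int_{\{u^\eps\le\delta\}}F(x,u^\eps)v^\eps$, which is the replacement for the strong maximum principle and is what makes the singular term harmless in the limit.

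Next, to pass to the limit in the equation, the key device is the oscillating test function: for a fixed $z\in H^1_0(\Omega)\cap L^\infty(\Omega)$ with $z\ge0$, the function $w^\eps z$ lies in $H^1_0(\oeps)$ by \eqref{cond2}, $w^\eps z\ge0$, so it is an admissible test function in \eqref{Leps}. Writing out $\int_{\oeps}A Du^\eps D(w^\eps z) = \int_{\oeps}F(x,u^\eps)\,w^\eps z$ and expanding the left-hand side as $\int_\Omega w^\eps A D\tilde u^\eps Dz + \int_\Omega z\,A D\tilde u^\eps Dw^\eps$, one treats the second term by the standard Cioranescu--Murat manipulation: use the adjoint equation \eqref{cond5} for $w^\eps$ (test it with $z\,\tilde u^\eps$, legitimate because $z\tilde u^\eps$ is built from $z^\eps\in H^1_0(\oeps)$ so the $\lambda^\eps$ term vanishes by the last line of \eqref{cond5}) to rewrite $\int_\Omega z A D\tilde u^\eps Dw^\eps$ in terms of $\langle\mu^\eps,\cdot\rangle$ and lower-order terms, then pass to the limit using $w^\eps\rightharpoonup1$ in $H^1$ and weak-$\ast$ in $L^\infty$ (\eqref{cond3}), $\mu^\eps\to\mu$ strongly in $H^{-1}$ (\eqref{cond5}), and the div-curl / compensated-compactness type convergence that gives $\int_\Omega zAD\tilde u^\eps Dw^\eps \to \int_\Omega z\,u^0\,d\mu - (\text{a term that cancels against the first integral's defect})$. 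The outcome is $\int_\Omega AD u^0 Dz + \int_\Omega u^0 z\,d\mu = \lim_\eps\int_{\oeps}F(x,u^\eps)w^\eps z$, exactly the structure of \eqref{L_0}. The right-hand side limit is identified by splitting $\int_{\oeps}F(x,u^\eps)w^\eps z = \int_{\{u^\eps>\delta\}}+\int_{\{u^\eps\le\delta\}}$: on $\{u^\eps>\delta\}$ one has $F(x,u^\eps)\le h(\delta^{-\gamma}+1)\in L^r$ and $u^\eps\to u^0$ a.e. plus $w^\eps\to1$, so dominated/Vitali convergence gives $\int_{\{u^0>\delta\}}F(x,u^0)z$ up to a vanishing error, while the $\{u^\eps\le\delta\}$ piece is $\le$ the uniform Proposition~\ref{prop3} bound which tends to $0$ as $\delta\to0$; letting $\delta\to0$ and invoking \eqref{313bis}/\eqref{314} for $u^0$ identifies the limit as $\int_\Omega F(x,u^0)z$. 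A density argument in $z$ (from $H^1_0\cap L^\infty$ to $H^1_0\cap L^2(\Omega;d\mu)$, using \eqref{57bis2}--\eqref{58bis}) then yields \eqref{L_0} for all admissible test functions, and the finiteness $\int_\Omega F(x,u^0)z<+\infty$ follows from the same splitting.

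Finally one must check \eqref{sol1h0}: $u^0\in H^1_0(\Omega)$ is clear from the weak limit, $u^0\ge0$ a.e. from $\tilde u^\eps\ge0$ and a.e. convergence, and $u^0\in L^2(\Omega;d\mu)$ comes either from testing the limit equation with $u^0$ truncated ($T_k(u^0)$) and letting $k\to\infty$ using Fatou on the $\mu$-term, or from the strong convergence of $\int_\Omega w^\eps AD\tilde u^\eps D\tilde u^\eps$ toward $\int_\Omega ADu^0Du^0 + \int_\Omega (u^0)^2 d\mu$ which is the usual byproduct of the Cioranescu--Murat argument. I expect the main obstacle to be the rigorous justification of the limit in the term $\int_\Omega z\,AD\tilde u^\eps Dw^\eps$: one does not have strong $H^1$ convergence of $\tilde u^\eps$, so the passage to the limit must be done by the compensated-compactness manipulation using the \emph{equation} satisfied by $u^\eps$ (to control $-\mathrm{div}\,A Du^\eps$) together with the equation for $w^\eps$, and one has to be careful that the test function $z\tilde u^\eps$ used against \eqref{cond5} is genuinely in $H^1_0(\oeps)$ (so the $\lambda^\eps$ contribution drops) and that the singular right-hand side $F(x,u^\eps)$, which is only in $L^1_{\mathrm{loc}}$ a~priori, is controlled when paired with $w^\eps z\tilde u^\eps$ — here again the Proposition~\ref{prop3} estimate near $\{u^\eps\le\delta\}$ is what saves the argument. (Note also the evident typo in \eqref{L_0}: the right-hand side should read $\int_\Omega F(x,u^0)z$, not $\int_\Omega F(x,u^0)\psi$.)
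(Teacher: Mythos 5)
Your overall strategy is the paper's: uniform $H^1_0$ bound from Proposition~\ref{lem2}, oscillating test functions $w^\eps z$ in \eqref{Leps}, the Cioranescu--Murat duality with \eqref{cond5} to generate the strange term, a splitting of the singular right-hand side at the level $\{u^\eps\leq\delta\}$, and truncation/density to reach all $z\in H^1_0(\Omega)\cap L^2(\Omega;d\mu)$. But there is a genuine gap precisely at the point you yourself identify as the crux. You claim that the piece on $\{\tilde u^\eps\leq\delta\}$ is controlled by ``the uniform Proposition~\ref{prop3} bound which tends to $0$ as $\delta\to0$''. Applied on $\oeps$ with the ($\eps$-dependent) test function $w^\eps\phi$, Proposition~\ref{prop3} only gives the bound $\int_\Omega A(x)D\tilde u^\eps D(w^\eps\phi)\,Z_\delta(\tilde u^\eps)$, which contains the product $D\tilde u^\eps\cdot Dw^\eps$ of two sequences that converge only weakly in $L^2(\Omega)^N$; unlike in the stability Theorem~\ref{est}, where the test function is fixed and $D\varphi\,Z_\delta(u_n)$ converges strongly, this bound is neither uniform in $\eps$ nor obviously small in $\delta$. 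The paper's proof needs an extra device here: it introduces the primitive $Y_\delta(s)=\int_0^s Z_\delta(\sigma)\,d\sigma$, rewrites $\int_\Omega A(x)D\tilde u^\eps Dw^\eps\,\phi\,Z_\delta(\tilde u^\eps)=\int_\Omega {}^t\!A(x)Dw^\eps D\big(\phi\,Y_\delta(\tilde u^\eps)\big)-\int_\Omega {}^t\!A(x)Dw^\eps D\phi\,Y_\delta(\tilde u^\eps)$, uses \eqref{cond5} to replace the first term by $\langle\mu^\eps,\phi\,Y_\delta(\tilde u^\eps)\rangle$, passes to the limit in $\eps$ \emph{first}, and only then lets $\delta\to0$, using $DY_\delta(u^0)=Z_\delta(u^0)Du^0\to\chi_{\{u^0=0\}}Du^0=0$ strongly in $L^2(\Omega)^N$. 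Without this mechanism (or an equivalent one) your estimate \eqref{home}-type conclusion does not follow, and the identification of the limit of the right-hand side collapses.

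Two further points need repair. First, you test \eqref{cond5} with $z\,\tilde u^\eps$ for $z\in H^1_0(\Omega)\cap L^\infty(\Omega)$; since $\tilde u^\eps$ is not known to be bounded, $\tilde u^\eps Dz$ need not belong to $L^2(\Omega)^N$, so $z\,\tilde u^\eps$ may fail to be in $H^1_0(\Omega)$ and the pairings with $\mu^\eps$, $\lambda^\eps$ are not defined. The paper avoids this by running the whole $\eps$-limit with $\phi\in\mathcal D(\Omega)$ (so that $\tilde u^\eps D\phi\in L^2$) and only afterwards enlarging the class of test functions in two stages: from $\mathcal D(\Omega)$ to $H^1_0\cap L^\infty$ by mollification combined with $\hat\psi_n=\inf\{\psi_n,\psi\}$ and the quasi-continuity facts \eqref{57bis2}--\eqref{58bis}, and then from $H^1_0\cap L^\infty$ to $H^1_0\cap L^2(\Omega;d\mu)$ by truncation $T_n(z)$; your single density step skips the first, and it is exactly the one that makes the $\eps$-limit licit. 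Second, you invoke \eqref{313bis}/\eqref{314} for $u^0$ to discard $\int_{\{u^0=0\}}F(x,u^0)z$; Proposition~\ref{330} is proved only for solutions of \eqref{eqprima} in the sense of Definition~\ref{sol}, whereas $u^0$ solves the problem with the extra term $\mu u^0$ and is not yet known to solve anything at that stage, so this is circular as stated. The paper instead proves $\int_{\{u^0=0\}}F(x,u^0)\phi=0$ directly, by Fatou's Lemma at the $\eps$-level combined with the vanishing of the $\{\tilde u^\eps\leq\delta\}$ contribution. Your steps for $u^0\geq0$, $u^0\in L^2(\Omega;d\mu)$ (via $T_n(u^0)$ and Fatou) and the final passage to $z\in H^1_0\cap L^2(\Omega;d\mu)$ match the paper and are fine.
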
 

\begin{remark}
Requirements \eqref{sol1h0} and \eqref{L_0} are the adaptation of the Definition~\ref{sol} of a solution to problem \eqref{eqprima} to the case of problem 
\begin{equation}
\label{518bis}
\begin{cases}
\displaystyle - div \,A(x) D u^0+\mu u^0  = F(x,u^0) & \mbox{in} \; \Omega,\\
u^0 = 0 & \mbox{on} \; \partial \Omega,\\
\end{cases} 
\end{equation}
 in which there is now a zeroth order term $\mu u^0$, where $\mu$ is a nonnegative measure of $H^{-1}(\Omega)$. Theorem~\ref{homogenization} therefore expresses the fact that, when assumptions \eqref{cond1}, \eqref{cond1bis}, \eqref{cond2}, \eqref{cond3}, \eqref{cond4} and \eqref{cond5} hold true, the result of the homogenization process of the singular semilinear problem \eqref{eqprimah} in $\oeps$ with Dirichlet boundary condition on the whole of the boundary $\partial\oeps$ is the singular semilinear problem \eqref{518bis}, where the ``strange term" $\mu u^0$ appears exactly as in the case of the linear problem \eqref{521}  where the right-hand side belongs to $L^2(\Omega)$.
 
  Note nevertheless that the result was not a priori obvious due to the presence of the term $F(x,u^\eps)$, which is singular (at least) on the boundary $\partial\oeps$ and, in particular, on the boundary of the holes, whose number increases more and more when $\eps$ goes to zero, ``invading" the entire open set $\Omega$.
\qed
\end{remark}
\begin{remark}
If $F(x,s)$ satisfies, in addition to \eqref{car} and \eqref{eq0.1}, the further assumption \eqref{eq0.2}, the solution $u^\eps$ to \eqref{sol1h} and \eqref{Leps} is unique (see Theorem~\ref{uniqueness} above), and the solution $u^0$ to \eqref{sol1h0} and \eqref{L_0} is also unique, as it is easily seen from a proof very similar to the one made in Section~\ref{comparison} below.

Under this further assumption there is therefore no need to extract a subsequence in Theorem~\ref{homogenization}, and the convergence takes place for the whole sequence $\eps$.
\qed
\end{remark}

Further to the homogenization result of Theorem~\ref{homogenization}, we will also prove the following corrector result, which, under the assumptions that $u^0\in L^\infty(\Omega)$ and that the matrix $A$ is symmetric, states that $w^\eps u^0$ is a strong approximation in $H_0^1(\Omega)$ of $\tilde{u^\eps}$.

\begin{theorem}
\label{corrector}
Assume that $N \geq 2$ and that the matrix $A$ and the function $F$ satisfy \eqref{eq0.0}, \eqref{car} and \eqref{eq0.1}. Assume also that the sequence of perforated domains $\Omega^\eps$ is such that  \eqref{cond1}, \eqref{cond1bis}, \eqref{cond2}, \eqref{cond3}, \eqref{cond4} and \eqref{cond5} hold true.
Finally let  $u^\eps$ be the subsequence of solutions to problem \eqref{eqprimah} in the sense of Definition~\ref{sol} (see \eqref{sol1h} and \eqref{Leps})
defined in Theorem~\ref{homogenization},
and let $u^0$ be its limit defined by \eqref{512bis}, \eqref{sol1h0} and \eqref{L_0}.
Assume moreover that
\begin{equation}
\label{517a}
A(x)={}^t\!A(x),
\end{equation}
\begin{equation}
\label{517}
u^0\in L^\infty(\Omega).
\end{equation}

Then further to \eqref{512bis} one has
\begin{equation}
\label{518}
\tilde{u^\eps}=w^\eps u^0 +r^\eps,
\mbox{ where } r^\eps\rightarrow 0 \mbox{ in } H_0^1(\Omega) \mbox{ strongly}.
\end{equation}
\end{theorem}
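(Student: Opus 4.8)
The plan is to imitate the classical corrector argument of \cite{CM}, using the strong $H^1_0(\Omega)$ convergence provided by the stability Theorem~\ref{est} (which is built into the construction of $u^\eps$ and $u^0$ in Theorem~\ref{homogenization}) to control the singular term. First I would set $r^\eps = \widetilde{u^\eps} - w^\eps u^0$, which lies in $H^1_0(\Omega)$ since $u^0 \in H^1_0(\Omega) \cap L^\infty(\Omega)$ and hence, by \eqref{cond2}, $w^\eps u^0 \in H^1_0(\Omega^\eps) \subset H^1_0(\Omega)$ after extension by zero (using \eqref{57bis}). The goal is $\|Dr^\eps\|_{L^2(\Omega)^N} \to 0$, so by coercivity \eqref{eq0.0} it suffices to show $\int_\Omega A(x) Dr^\eps Dr^\eps \to 0$. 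Expanding,
\begin{equation*}
\int_\Omega A Dr^\eps Dr^\eps = \int_\Omega A D\widetilde{u^\eps} D\widetilde{u^\eps} - 2\int_\Omega A D\widetilde{u^\eps} D(w^\eps u^0) + \int_\Omega A D(w^\eps u^0) D(w^\eps u^0),
\end{equation*}
and the strategy is to pass to the limit in each of the three terms and check that the limits cancel.

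For the first term I would test \eqref{Leps} with $\varphi^\eps = u^\eps$ (legitimate since $u^\eps \geq 0$), giving $\int_{\Omega^\eps} A Du^\eps Du^\eps = \int_{\Omega^\eps} F(x,u^\eps) u^\eps$; the a priori estimates of Section~6 (Proposition~\ref{prop3} and the energy bound) plus the strong convergence \eqref{num2}-type result show this converges to $\int_\Omega F(x,u^0) u^0 + \int_\Omega (u^0)^2 d\mu$, which by testing \eqref{L_0} with $z = u^0$ equals $\int_\Omega A Du^0 Du^0 + \int_\Omega (u^0)^2 d\mu$. Wait — more directly, since $\widetilde{u^\eps} \to u^0$ strongly in $H^1_0(\Omega)$ is what Theorem~\ref{homogenization}'s proof actually delivers (via the stability machinery), $\int_\Omega A D\widetilde{u^\eps} D\widetilde{u^\eps} \to \int_\Omega A Du^0 Du^0$ immediately. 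For the third term, using $A = {}^tA$ from \eqref{517a}, I would expand $D(w^\eps u^0) = u^0 Dw^\eps + w^\eps Du^0$ and use \eqref{cond5}: the dominant contribution $\int_\Omega (u^0)^2 A Dw^\eps Dw^\eps$ converges to $\int_\Omega (u^0)^2 d\mu$ by the argument already carried out in Subsection~\ref{51} (extended from $\phi \in \mathcal{D}(\Omega)$ to $(u^0)^2 \in W^{1,1}_0 \cap L^\infty$ by density and the $L^\infty(\Omega;d\mu)$ bound \eqref{58bis}), the cross term $2\int_\Omega u^0 w^\eps A Dw^\eps Du^0$ vanishes because $A Dw^\eps = {}^t(\,{}^tA Dw^\eps)$ and $-div\,{}^tA Dw^\eps = \mu^\eps - \lambda^\eps \to \mu$ while $w^\eps \rightharpoonup 1$, and $\int_\Omega (w^\eps)^2 A Du^0 Du^0 \to \int_\Omega A Du^0 Du^0$ by \eqref{cond3} and dominated convergence.

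The second (cross) term is the crux. I would compute $\int_\Omega A D\widetilde{u^\eps} D(w^\eps u^0) = \int_\Omega w^\eps A D\widetilde{u^\eps} Du^0 + \int_\Omega u^0 A D\widetilde{u^\eps} Dw^\eps$. The first piece tends to $\int_\Omega A Du^0 Du^0$ by strong convergence of $D\widetilde{u^\eps}$ and \eqref{cond3}. For the second piece, symmetry \eqref{517a} lets me write $u^0 A D\widetilde{u^\eps} Dw^\eps = u^0 \,{}^tA Dw^\eps \cdot D\widetilde{u^\eps}$, and I would recognize this as $\langle -div\,{}^tA Dw^\eps, u^0 \widetilde{u^\eps}\rangle$ minus a term $\int_\Omega \widetilde{u^\eps}\, {}^tA Dw^\eps Du^0$; using \eqref{cond5}, the duality against $\lambda^\eps$ kills the holes' contribution (since $u^0 \widetilde{u^\eps}$ is — after the usual truncation/approximation by $H^1_0 \cap L^\infty$ functions — of the admissible form, $\widetilde{u^\eps}$ vanishing on the holes), leaving $\langle \mu^\eps, u^0 \widetilde{u^\eps}\rangle \to \int_\Omega (u^0)^2 d\mu$ and $\int_\Omega \widetilde{u^\eps}\,{}^tA Dw^\eps Du^0 \to 0$ (again by the $\mu^\eps - \lambda^\eps$ structure against $u^0 Du^0 \cdot(\cdots)$, or by weak-strong arguments). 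Collecting: the first term gives $\int A Du^0 Du^0$, twice the second gives $2\int A Du^0 Du^0 + 2\int (u^0)^2 d\mu$, the third gives $\int A Du^0 Du^0 + \int (u^0)^2 d\mu$... I would need to recheck the bookkeeping, but the expected cancellation is $\int A Dr^\eps Dr^\eps \to \int A Du^0 Du^0 + \int(u^0)^2 d\mu - 2(\int A Du^0 Du^0 + \int (u^0)^2 d\mu) + (\int A Du^0 Du^0 + \int (u^0)^2 d\mu) = 0$, wait that leaves $-\int (u^0)^2 d\mu \neq 0$ — so I would instead route the first term through $\int_{\Omega^\eps} F(x,u^\eps) u^\eps \to \int_\Omega F(x,u^0)u^0 + \int (u^0)^2 d\mu$ and combine with the energy identity from \eqref{L_0}. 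The genuine difficulty is precisely this: making the singular right-hand side $F(x,u^\eps)$ cooperate in the passage to the limit, which is why the a priori estimate of $\int_{\{u^\eps \leq \delta\}} F(x,u^\eps) v$ from Proposition~\ref{prop3} — uniform in $\eps$ — is indispensable, allowing one to split $F(x,u^\eps)\widetilde{u^\eps}$ into a part on $\{u^\eps > \delta\}$ where $F$ is dominated and classical convergence applies, and a part on $\{u^\eps \leq \delta\}$ which is $O(\delta)$ uniformly; letting $\delta \to 0$ afterwards closes the estimate and yields \eqref{518}.
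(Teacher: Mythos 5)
Your overall skeleton (expanding $\int_\Omega A\,Dr^\eps Dr^\eps$ into the three terms and passing to the limit in each) is the same as the paper's, but two of your key limit claims are wrong, and the confusion you yourself notice in the bookkeeping comes precisely from them. First, it is false that Theorem~\ref{homogenization} delivers strong $H^1_0(\Omega)$ convergence of $\tilde{u^\eps}$ to $u^0$: the stability Theorem~\ref{est} lives on a fixed domain, while in the perforated setting only the weak convergence \eqref{512bis} holds, and strong convergence would be incompatible with the strange term whenever $\int_\Omega (u^0)^2 d\mu>0$ (indeed, the corrector theorem exists exactly because $\tilde{u^\eps}$ is close to $w^\eps u^0$, not to $u^0$, in the strong topology). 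Consequently the limit of the first term is not $\int_\Omega A\,Du^0Du^0$. Your attempted repair is also incorrect: by the growth condition (\ref{eq0.1} {\it iv}) and an equi-integrability/Vitali argument (the paper's computation analogous to \eqref{836n}), one has $\int_\Omega \widetilde{F(x,u^\eps)}\,\tilde{u^\eps}\to\int_\Omega F(x,u^0)u^0$ with no extra $\mu$ term; the term $\int_\Omega (u^0)^2 d\mu$ enters only through the identity obtained by taking $z=u^0$ in \eqref{L_0}, which converts this limit into $\int_\Omega A\,Du^0Du^0+\int_\Omega (u^0)^2 d\mu$. With that correct value the cancellation $(X+Y)-2(X+Y)+(X+Y)=0$ does work, where $X=\int_\Omega A\,Du^0Du^0$ and $Y=\int_\Omega (u^0)^2d\mu$; your $\delta$-splitting via Proposition~\ref{prop3} is not needed here (and your claim that the contribution on $\{u^\eps\le\delta\}$ is $O(\delta)$ uniformly is dubious when $\gamma=1$).

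The second genuine gap is in the cross term, which is the delicate point of the proof. To pass to the limit in $\int_\Omega u^0\,A\,D\tilde{u^\eps}\,Dw^\eps$ you propose to pair \eqref{cond5} with $u^0\tilde{u^\eps}$ and to discard $\int_\Omega \tilde{u^\eps}\,{}^t\!A\,Dw^\eps Du^0$ by ``weak--strong arguments''. Neither step is justified under the stated hypotheses: $u^0\tilde{u^\eps}$ need not be bounded in $H^1_0(\Omega)$ (the product $\tilde{u^\eps}Du^0$ is only in $L^{(2^*)'}$, not $L^2$), so $\langle\mu^\eps,u^0\tilde{u^\eps}\rangle$ is neither well controlled nor convergent to $\int_\Omega(u^0)^2d\mu$, and $\tilde{u^\eps}Du^0$ is not strongly compact in $L^2(\Omega)^N$, so pairing it with $Dw^\eps\rightharpoonup 0$ gives nothing. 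This only works directly when $\tilde{u^\eps}$ is uniformly bounded in $L^\infty$ (the paper's Remark~\ref{rem9.1}, under the extra assumption \eqref{nuch}). In the general case the paper's missing key idea is to replace $u^0$ by $\phi\in\mathcal{D}(\Omega)$, pass to the limit in $\int_\Omega A\,D\tilde{u^\eps}\,Dw^\eps\,\phi$ via \eqref{cond5}, and control the remainder $\int_\Omega A\,D\tilde{u^\eps}\,Dw^\eps\,(u^0-\phi)$ quantitatively by testing \eqref{cond5} with $w^\eps(u^0-\phi)^2$, which yields the bound $c\,\langle\mu,(u^0-\phi)^2\rangle_{H^{-1}(\Omega),H^1_0(\Omega)}^{1/2}$ of \eqref{946}, and finally to let $\phi\to u^0$ strongly in $H^1_0(\Omega)$ and weakly-star in $L^\infty(\Omega)$. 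Without this approximation-plus-estimate step (or the uniform $L^\infty$ bound), your treatment of the cross term does not close.
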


\begin{remark}
\label{rem55}
If further to assumptions \eqref{car} and \eqref{eq0.1}, the function $F$ is assumed to satisfy the regularity assumption \eqref{nuch}, then in view of Proposition~\ref{lem1} every solution $u^\eps$ to \eqref{eqprimah} in the sense of Definition~\ref{sol} satisfies
\begin{equation*}
\begin{cases}\vspace{0.1cm}
\dys\|\tilde{u^\eps}\|_{L^\infty(\Omega)}=\|u^\eps\|_{L^\infty(\oeps)}\leq\\
\dys\leq 1+\frac{2}{\alpha} \,  C(|\oeps|, N, t) \, \|h\|_{L^t(\oeps)}\leq
 1+\frac{2}{\alpha} \,  C(|\Omega|, N, t) \, \|h\|_{L^t(\Omega)}.
\end{cases}
\end{equation*}
In such a case, the limit $u^0$ satisfies assumption \eqref{517} with
$$
\|u^0\|_{L^\infty(\Omega)}\leq 1+\frac{2}{\alpha} \,  C(|\Omega|, N, t) \, \|h\|_{L^t(\Omega)}.
$$

\qed
\end{remark}

 \section{A  priori estimates}

\begin{proposition}{\bf ($H_0^1(\Omega)$ a priori estimate)}.
\label{lem2}
Assume that the matrix $A$ and the function $F$ satisfy \eqref{eq0.0}, \eqref{car} and \eqref{eq0.1}. Then every $u$ solution to problem \eqref{eqprima} in the sense of Definition~\ref{sol} satisfies
\begin{equation}
\label{num11bis}
\|u\|_{H_0^1(\Omega)} \leq C(|\Omega|,N,\alpha,\gamma, r)\,  \big( \|h\|_{L^{r}(\Omega)} + \|h\|_{L^{1}(\Omega)}^{1/2} \big),
\end{equation}
where the constant $C(|\Omega|,N,\alpha,\gamma, r)$ 
depends only on $|\Omega|$, $N$, $\alpha$, $\gamma$ and  $r$ 
and is a nondecreasing function of $|\Omega|$.
\end{proposition}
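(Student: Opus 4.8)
The plan is to test the weak formulation \eqref{L} with $\varphi = u$ itself. This is admissible since $u \in H_0^1(\Omega)$ and $u \geq 0$ by \eqref{300}--\eqref{3000}, and Definition~\ref{sol} then guarantees both that $\into F(x,u)\,u < +\infty$ and that $\into A(x)DuDu = \into F(x,u)\,u$. Using the coercivity \eqref{eq0.0} of $A$ on the left-hand side we obtain
\[
\alpha \, \|u\|_{H_0^1(\Omega)}^2 \leq \into F(x,u)\,u .
\]
The point of this choice of test function is that it exactly tames the singularity of $F$ at $u=0$: by Proposition~\ref{330} the integral is carried by the set $\{u>0\}$, and there the growth bound \eqref{eq0.1} gives $F(x,u)\,u \leq h(x)\,(u^{1-\gamma} + u)$, where the formerly singular factor $u^{-\gamma}$ has turned into the nonnegative power $u^{1-\gamma}$ precisely because $0 < \gamma \leq 1$.

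Next I would bound $u^{1-\gamma} \leq 1 + u$ pointwise (valid since $0 \leq 1-\gamma \leq 1$), so that
\[
\into F(x,u)\,u \leq \|h\|_{L^1(\Omega)} + 2\into h\,u ,
\]
and estimate the last term by H\"older's inequality with exponents $r$ and its conjugate $r'$, followed by a Sobolev--Poincar\'e embedding. For $N \geq 3$ one has $r = \tfrac{2N}{N+2}$, hence $r' = 2^*$, and $\|u\|_{L^{2^*}(\Omega)} \leq C_{\mathrm S}(N)\,\|u\|_{H_0^1(\Omega)}$; for $N = 2$ one has $r' < +\infty$ and $H_0^1(\Omega) \hookrightarrow L^{r'}(\Omega)$ with a constant depending on $|\Omega|$, $N$, $r$; for $N = 1$ one has $r' = +\infty$ and $H_0^1(\Omega) \hookrightarrow L^\infty(\Omega)$ with a constant depending on $|\Omega|$. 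In every case $\into h\,u \leq C\,\|h\|_{L^r(\Omega)}\,\|u\|_{H_0^1(\Omega)}$ with $C$ of the claimed nature and nondecreasing in $|\Omega|$.

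Collecting these bounds yields the quadratic inequality $\alpha\,X^2 \leq 2C\,\|h\|_{L^r(\Omega)}\,X + \|h\|_{L^1(\Omega)}$ with $X := \|u\|_{H_0^1(\Omega)}$, whence
\[
X \leq \frac{1}{\alpha}\Big( C\,\|h\|_{L^r(\Omega)} + \sqrt{C^2\,\|h\|_{L^r(\Omega)}^2 + \alpha\,\|h\|_{L^1(\Omega)}}\,\Big) \leq \frac{2C}{\alpha}\,\|h\|_{L^r(\Omega)} + \frac{1}{\sqrt{\alpha}}\,\|h\|_{L^1(\Omega)}^{1/2} ,
\]
using $\sqrt{a+b} \leq \sqrt{a} + \sqrt{b}$; this is precisely \eqref{num11bis}, and the resulting constant is nondecreasing in $|\Omega|$ since the embedding constants used above are (for $N \leq 2$; for $N \geq 3$ the relevant constant does not depend on $|\Omega|$). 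There is no genuinely hard step in this argument: the only delicate point is the very first one, namely that $u$ is a legitimate test function and that this converts the singular nonlinearity into the integrable quantity $h\,u^{1-\gamma}$. This is exactly where the hypothesis $\gamma \leq 1$ is used, and it is what makes a global energy estimate available, in contrast with the case $\gamma > 1$ mentioned in the Introduction; the dependence of the constant on $\gamma$ enters only, and harmlessly, through this step.
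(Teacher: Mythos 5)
Your proof is correct and follows essentially the same route as the paper: take $\varphi=u$ in \eqref{L}, use the growth condition (\ref{eq0.1} {\it iv}) with $0<\gamma\leq 1$ to reduce the singular term to $h\,(u^{1-\gamma}+u)$, then H\"older plus the Sobolev (or Poincar\'e-type) embedding in the three cases $N\geq 3$, $N=2$, $N=1$, and close the estimate. The only differences are cosmetic: you bound $u^{1-\gamma}\leq 1+u$ where the paper uses Young's inequality $u^{1-\gamma}\leq(1-\gamma)u+\gamma$, and you solve the resulting quadratic inequality in $\|u\|_{H_0^1(\Omega)}$ where the paper absorbs $\tfrac{\alpha}{2}\|Du\|_{L^2(\Omega)^N}^2$ into the left-hand side.
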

\begin{proof}
We take $\varphi=u$ as test function in \eqref{L}. 
Using (\ref{eq0.1} {\it iv}) 
and Young's inequality with $1/p=1-\gamma$ and $1/p' = \gamma$ when $0<\gamma<1$, which implies that 
\begin{equation}
\label{Younggamma}
u^{1-\gamma} \leq \frac{1}{p} u^{(1-\gamma)p}+\frac{1}{p'} = (1-\gamma) u+\gamma,
\end{equation}
we obtain 
\begin{equation}
\begin{cases}
\label{61bis}
\vspace{0.1 cm}
\dys \int_\Omega A(x) DuDu=\int_\Omega F(x,u) u\leq \into h(x) \big(\frac{1}{u^\gamma}+1\big) u \leq \\ 
\dys \leq \into h(x) \, \big((1 - \gamma ) u + \gamma + u \big) = \into h(x) \, \big((2 - \gamma ) u + \gamma \big).
\end{cases}
\end{equation}

When $N\geq 3$, we use Sobolev's embedding Theorem $H_0^1(\Omega)\subset L^{2^*}(\Omega)$, with $2^*$ defined by $\frac{1}{2^*}=\frac 12-\frac 1N$, and the Sobolev's inequality 
\begin{equation}
\label{Sob}
\|v\|_{L^{2^*}(\Omega)}\leq C_N \|Dv\|_{L^2(\Omega)} \quad \forall v\in H_0^1(\Omega) ;
\end{equation}
note that $(2^*)'=2N/(N+2) = r$ since $N \geq 3$.

Using in \eqref{61bis} the coercivity \eqref{eq0.0}, H\"older's inequality, Sobolev's inequality  \eqref{Sob} and finally Young's inequality, we get
\begin{equation}
\label{6.47new}
\begin{cases}
\dys \alpha\into |Du|^2\leq (2 - \gamma) \|h\|_{L^{r}(\Omega)} \|u\|_{L^{2^*}(\Omega)} + \gamma \|h\|_{L^{1}(\Omega)} \leq
\\
\dys \leq (2 - \gamma) C_N \|h\|_{L^{r}(\Omega)} \|Du\|_{L^{2}(\Omega)^N} + \gamma \|h\|_{L^{1}(\Omega)} \leq
\\
\dys \leq  {\frac{\alpha}{2}} \|Du\|_{L^{2}(\Omega)^N}^2 + {\frac{1}{2\alpha}} (2 -\gamma)^2 C_N^2 \|h\|_{L^{r}(\Omega)}^2 + \gamma \|h\|_{L^{1}(\Omega)}, 
\end{cases}
\end{equation}
which yields 
\begin{equation*}
 \|Du\|_{L^{2}(\Omega)^N}^2 \leq \Big( {\frac{(2 - \gamma)\,  C_N}{\alpha}}\Big)^2 \, \|h\|_{L^{r}(\Omega)}^2 +  {\frac{2 \gamma}{\alpha}} \, \|h\|_{L^{1}(\Omega)},
 \end{equation*}
 which finally implies
\begin{equation*}
\|Du\|_{L^{2}(\Omega)^N} \leq {\frac{(2 - \gamma) \, C_N}{\alpha}}  \|h\|_{L^{r}(\Omega)} + \Big( {\frac{2 \gamma}{\alpha}} \Big)^{1/2} \, \|h\|_{L^{1}(\Omega)}^{1/2} \, ,
\end{equation*}
namely estimate \eqref{num11bis} with a constant which depends only on $N$, $\alpha$ and $\gamma$.

The proof is similar when $N=1$ and $N=2$, but Sobolev's inequality \eqref{Sob} has now to be replaced by 
\begin{equation*}
\dys \|v\|_{L^{\infty}(\Omega)}\leq | \Omega |^{1/2}  \,  \|Dv\|_{L^2(\Omega)} \quad \forall v\in H_0^1(\Omega) \;\;  \mbox{ when } N= 1,
\end{equation*}
and by
\begin{equation*}
\dys \forall r' > 1, \quad \|v\|_{L^{r'}(\Omega)}\leq C(| \Omega |, r)  \, \|Dv\|_{L^2(\Omega)} \quad \forall v\in H_0^1(\Omega) \;\; \mbox{ when } N= 2,
\end{equation*}
where the constant $C(| \Omega |, r)$ is a nondecreasing function of $| \Omega | $. 

This completes the proof of estimate \eqref{num11bis}.
\end{proof}

In the following Proposition we give an estimate of $F(x,u)\varphi$ near the singular set $\{u=0\}$.
To this aim we introduce for $\delta>0$  the  function $Z_\delta:[0,+\infty[\rightarrow[0,+\infty[$ defined by 
\begin{equation}
\label{num23bis}
Z_\delta(s)=\begin{cases}
1, & \mbox{if } 0\leq s\leq \delta, \\
 -\frac{s}{\delta}+2, & \mbox{if }  \delta\leq s\leq 2\delta ,\\
  0, & \mbox{if } 2\delta\leq s.
\end{cases}
\end{equation}

\begin{proposition}{\bf(Control of $\dys\int_{\{u\leq \delta\}}F(x,u)v$ when $\delta$ is small)}.
\label{prop3}
Assume that the matrix $A$ and the function $F$ satisfy \eqref{eq0.0}, \eqref{car} and \eqref{eq0.1}. Then every $u$ solution to problem \eqref{eqprima} in the sense of Definition~\ref{sol} satisfies

\begin{align}
\label{5701bis}
\begin{cases}
\forall \varphi \in H^1_0(\Omega), \,\, \varphi \geq 0, \,\, \forall \delta>0, \\
\dys 0 \leq \int_{\{u\leq\delta\}} F(x,u)\varphi\leq
 \into A(x) Du D\varphi Z_\delta(u).
\end{cases}
\end{align}

\end{proposition}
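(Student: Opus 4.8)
The plan is to test the weak formulation \eqref{L} with the function $\varphi\, Z_\delta(u)$, where $Z_\delta$ is the cutoff introduced in \eqref{num23bis}. First I would check that this is a legitimate test function: since $Z_\delta$ is Lipschitz continuous and bounded with $0\leq Z_\delta\leq 1$, the composition $Z_\delta(u)$ lies in $H^1(\Omega)\cap L^\infty(\Omega)$ with $0\leq Z_\delta(u)\leq 1$, so $\varphi\, Z_\delta(u)\in H^1_0(\Omega)$ and $\varphi\, Z_\delta(u)\geq 0$; moreover $0\leq F(x,u)\,\varphi\, Z_\delta(u)\leq F(x,u)\,\varphi$, which is integrable by Definition~\ref{sol}. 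Hence the hypotheses needed to use $\varphi\, Z_\delta(u)$ in \eqref{L} are met.

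Next I would apply the chain rule for Sobolev functions (Stampacchia), namely $D(Z_\delta(u))=Z_\delta'(u)\,Du$ a.e.\ in $\Omega$, with $Z_\delta'(s)=-1/\delta$ on $(\delta,2\delta)$ and $Z_\delta'(s)=0$ otherwise (the values of $Z_\delta'$ at $s=\delta$ and $s=2\delta$ being irrelevant because $Du=0$ a.e.\ on the level sets $\{u=\delta\}$ and $\{u=2\delta\}$). Since $D(\varphi\, Z_\delta(u))=Z_\delta(u)\,D\varphi+\varphi\, Z_\delta'(u)\,Du$, plugging $\varphi\, Z_\delta(u)$ into \eqref{L} gives
\begin{equation*}
\into A(x)Du\, D\varphi\; Z_\delta(u)+\into A(x)Du\, Du\; \varphi\, Z_\delta'(u)=\into F(x,u)\,\varphi\, Z_\delta(u).
\end{equation*}

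The core of the argument is then a sign consideration: by the coercivity \eqref{eq0.0} one has $A(x)Du\, Du\geq\alpha|Du|^2\geq 0$ a.e., while $\varphi\geq 0$ and $Z_\delta'(u)\leq 0$, so the second term in the left-hand side is $\leq 0$. This yields $\into F(x,u)\,\varphi\, Z_\delta(u)\leq\into A(x)Du\, D\varphi\; Z_\delta(u)$. Finally, since $Z_\delta\equiv 1$ on $[0,\delta]$ and $F(x,u)\,\varphi\, Z_\delta(u)\geq 0$ everywhere, one has $\int_{\{u\leq\delta\}}F(x,u)\,\varphi=\int_{\{u\leq\delta\}}F(x,u)\,\varphi\, Z_\delta(u)\leq\into F(x,u)\,\varphi\, Z_\delta(u)$, and chaining this with the previous inequality gives the upper bound in \eqref{5701bis}; the lower bound is immediate from $F\geq 0$, $\varphi\geq 0$.

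I do not expect a serious obstacle here. The only points deserving a word of care are the admissibility of the test function $\varphi\, Z_\delta(u)$ (product of an $H^1_0(\Omega)$ function with a bounded $H^1(\Omega)$ function, hence in $H^1_0(\Omega)$) and the use of the chain rule together with the vanishing of $Du$ a.e.\ on the level sets where $Z_\delta$ fails to be differentiable — both entirely standard.
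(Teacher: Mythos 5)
Your overall strategy --- test with a cutoff built from $Z_\delta(u)$, drop the nonpositive term produced by $Z_\delta'(u)\leq 0$ via the coercivity of $A$, then restrict to $\{u\leq\delta\}$ where $Z_\delta(u)=1$ --- is exactly the paper's. The gap is in the admissibility of your test function $\varphi\,Z_\delta(u)$. Your justification, ``product of an $H^1_0(\Omega)$ function with a bounded $H^1(\Omega)$ function, hence in $H^1_0(\Omega)$'', is false when $\varphi$ is unbounded, and Definition~\ref{sol} allows $\varphi$ to be an arbitrary nonnegative element of $H^1_0(\Omega)$. Indeed $D(\varphi\,Z_\delta(u))=Z_\delta(u)\,D\varphi+\varphi\,Z_\delta'(u)\,Du$, and the second term, $-\frac{1}{\delta}\,\varphi\,Du\,\chi_{\{\delta<u<2\delta\}}$, is only controlled in $L^{N/(N-1)}(\Omega)^N$ by H\"older's inequality (with $\varphi\in L^{2^*}(\Omega)$, $Du\in L^2(\Omega)^N$); there is no reason for it to belong to $L^2(\Omega)^N$, so $\varphi\,Z_\delta(u)$ need not lie in $H^1(\Omega)$ at all (except when $N=1$, where $H^1_0(\Omega)\subset L^\infty(\Omega)$). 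Since \eqref{L} only admits test functions in $H^1_0(\Omega)$, this step as written is not justified.

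The repair is precisely what the paper does: take $T_k(\varphi)\,Z_\delta(u)$ as test function, which does belong to $H^1_0(\Omega)$ because $T_k(\varphi)\in H^1_0(\Omega)\cap L^\infty(\Omega)$ and $Z_\delta(u)\in H^1(\Omega)\cap L^\infty(\Omega)$. Your sign argument then gives, for every $k>0$ and $\delta>0$,
$0\leq \int_{\{u\leq\delta\}}F(x,u)\,T_k(\varphi)\leq \int_\Omega A(x)\,Du\,DT_k(\varphi)\,Z_\delta(u)$.
Finally let $k\to\infty$: on the left-hand side use Fatou's Lemma (or monotone convergence, since $T_k(\varphi)\uparrow\varphi$), and on the right-hand side use the strong convergence of $DT_k(\varphi)$ to $D\varphi$ in $L^2(\Omega)^N$ together with $0\leq Z_\delta(u)\leq 1$ and $A(x)Du\in L^2(\Omega)^N$. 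This yields \eqref{5701bis}. Everything else in your argument (the chain rule for $Z_\delta(u)$, the sign of $Z_\delta'$, the coercivity, the restriction to $\{u\leq\delta\}$, and the trivial lower bound) matches the paper and is correct.
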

\begin{proof}
The proof consists in taking $T_k(\varphi)Z_\delta(u)$, $\varphi \in H^1_0(\Omega), \varphi \geq 0$ as test function in \eqref{L}. This function belongs to $H_0^1(\Omega)$ and we get
\begin{equation*}
\begin{cases}
\dys \into A(x) Du DT_k(\varphi) Z_\delta(u)=\\\dys  ={{1}\over{\delta}}\int_{\delta<u<2\delta}A(x) Du Du T_k(\varphi) + \into F(x,u) T_k(\varphi)Z_\delta(u).  
\end{cases}
\end{equation*}
Since $Z_\delta (u) = 1$ on $\{u\leq\delta\}$,
this implies that
\begin{align}
\label{5701bis2}
\begin{cases}
\forall \varphi \in H^1_0(\Omega),\,\,\varphi \geq 0,\,\, \forall k>0, \,\, \forall \delta>0, \\
\dys 0 \leq \int_{\{u\leq\delta\}} F(x,u)T_k(\varphi)\leq
 \into A(x) Du DT_k(\varphi) Z_\delta(u).
\end{cases}
\end{align}

We now pass to the limit in \eqref{5701bis2} as $k$ tends to infinity, using the strong convergence of $D T_k(u)$ to $Du$ in $L^2(\Omega)^N$ in the right-hand side and Fatou's Lemma for on the left-hand side.
This gives \eqref{5701bis}.
\end{proof}

As a consequence of Proposition~\ref{prop3} we have:
\begin{proposition}
\label{prop69}
Assume that the matrix $A$ and the function $F$ satisfy \eqref{eq0.0}, \eqref{car} and \eqref{eq0.1}. Then every $u$ which is solution to problem \eqref{eqprima} in the sense of Definition~\ref{sol} satisfies
\begin{equation}\label{5800}
\int_{\{u=0\}}F(x,u)\varphi=0\quad \forall \varphi \in H^1_0(\Omega),\,\varphi\geq 0.
\end{equation}
\end{proposition}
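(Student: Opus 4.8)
The plan is to deduce Proposition~\ref{prop69} from Proposition~\ref{prop3} by letting $\delta$ tend to zero in the estimate \eqref{5701bis}. First I would fix $\varphi\in H^1_0(\Omega)$ with $\varphi\geq0$ and observe that for every $\delta>0$ the set $\{u=0\}$ is contained in $\{u\leq\delta\}$, and on $\{u=0\}$ one has $Z_\delta(u)=1$; hence the left-hand side of \eqref{5701bis} dominates $\int_{\{u=0\}}F(x,u)\varphi$, which is therefore a nonnegative number bounded above, for \emph{every} $\delta>0$, by $\int_\Omega A(x)Du\,D\varphi\,Z_\delta(u)$. Since $\int_\Omega F(x,u)\varphi<+\infty$ by \eqref{L}, all the quantities in play are finite, so it suffices to show that the right-hand side tends to $0$ as $\delta\to0^+$.

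Next I would analyze $\lim_{\delta\to0^+}\int_\Omega A(x)Du\,D\varphi\,Z_\delta(u)$. Pointwise a.e.\ in $\Omega$ one has $Z_\delta(u(x))\to \mathbf 1_{\{u=0\}}(x)$ as $\delta\to0^+$ (indeed $Z_\delta(0)=1$ for all $\delta$, while if $u(x)>0$ then $Z_\delta(u(x))=0$ as soon as $2\delta<u(x)$). Moreover $|A(x)Du\,D\varphi\,Z_\delta(u)|\leq \|A\|_{L^\infty}|Du||D\varphi|\in L^1(\Omega)$ uniformly in $\delta$ since $0\leq Z_\delta\leq1$. By Lebesgue's dominated convergence theorem,
\begin{equation*}
\int_\Omega A(x)Du\,D\varphi\,Z_\delta(u)\longrightarrow \int_{\{u=0\}} A(x)Du\,D\varphi.
\end{equation*}
But $Du=0$ almost everywhere on the level set $\{u=0\}$ (a standard property of Sobolev functions: the gradient of $u\in H^1_0(\Omega)$ vanishes a.e.\ on $\{u=c\}$ for any constant $c$), so this limit is $0$.

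Combining the two steps: for every $\delta>0$,
\begin{equation*}
0\leq \int_{\{u=0\}}F(x,u)\varphi\leq \int_\Omega A(x)Du\,D\varphi\,Z_\delta(u),
\end{equation*}
and the right-hand side goes to $0$ as $\delta\to0^+$, whence $\int_{\{u=0\}}F(x,u)\varphi=0$. Since $\varphi\in H^1_0(\Omega)$, $\varphi\geq0$, was arbitrary, this is exactly \eqref{5800}.

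I do not expect a serious obstacle here; the only points requiring a word of justification are the a.e.\ pointwise convergence $Z_\delta(u)\to\mathbf 1_{\{u=0\}}$ (elementary, from the definition \eqref{num23bis}) and the fact that $Du=0$ a.e.\ on $\{u=0\}$ (a classical lemma on Sobolev functions, e.g.\ Stampacchia's theorem). Everything else is dominated convergence applied to the already-established inequality of Proposition~\ref{prop3}.
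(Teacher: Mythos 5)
Your proposal is correct and follows essentially the same route as the paper: you pass to the limit $\delta\to0^+$ in the estimate \eqref{5701bis} of Proposition~\ref{prop3}, using $\{u=0\}\subset\{u\leq\delta\}$, the a.e.\ convergence $Z_\delta(u)\to\chi_{\{u=0\}}$, dominated convergence with the $L^1$ majorant $A(x)Du\,D\varphi$, and the fact that $Du=0$ a.e.\ on $\{u=0\}$ for $u\in H^1_0(\Omega)$. This is exactly the paper's argument.
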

\begin{proof}
Since $\{u=0\}\subset \{u\leq \delta\}$ for every $\delta>0$, inequality \eqref{5701bis} implies that 
\begin{align*}
\label{64bis}
\begin{cases}
\forall \varphi \in H^1_0(\Omega),\,\,\varphi \geq 0,\,\, \forall \delta>0, \\
\dys 0\leq \int_{\{u=0\}} F(x,u)\varphi\leq
 \into A(x) Du D\varphi Z_\delta(u).
\end{cases}
\end{align*}
When $\delta$ tends to zero,
$$
Z_\delta(u)\rightarrow \chi_{_{\{u=0\}}}\,\,\mbox{a.e. in } \Omega, 
$$
but since $u\in H^1_{0}(\Omega)$, one has 
$$
Du=0\,\,\mbox{a.e. on } \{u=0\},
$$
and therefore, since $A(x) Du D\varphi \in L^1(\Omega)$,
$$
\into A(x) Du D\varphi Z_\delta(u)\rightarrow 0\mbox{ as } \delta\to 0.
$$

This proves \eqref{5800}.
\end{proof}

\section{Proofs of the stability, existence  and regularity results\break (TheoremsÊ \ref{est} and \ref{EUS} and Proposition \ref{lem1})}
\label{proofexistence}

\begin{proof}[{\bf Proof of the stability Theorem~\ref{est}}] $\mbox{}$
\medskip

\noindent {\bf First step}

Since all the functions $F_n(x,s)$ satisfy assumptions \eqref{car} and \eqref{eq0.1} for the same $\gamma$ and $h$, every solution $u_n$ to problem \eqref{eqprima}$_n$ in the sense of Definition~\ref{sol} satisfies the a priori estimates \eqref{num11bis} and \eqref{5701bis} of Propositions~\ref{lem2} and \ref{prop3}.

Therefore there exist a subsequence, still labelled by $n$, and a function $u_\infty$ such that
\begin{equation}
\label{70}
u_n \rightharpoonup u_\infty \mbox{ in } H_0^1(\Omega) \mbox{ weakly and a.e. in } \Omega. 
\end{equation}
Since $u_n\geq 0$, we have also $u_\infty\geq 0$.  

Since $u_n$ satisfies \eqref{L}$_n$, we have
\begin{equation}
\label{72}
\int_\Omega A(x) Du_nD\varphi=\int_\Omega F_n(x,u_n) \varphi \,\,\, \forall \varphi\in H_0^1(\Omega),\,\, \varphi\geq 0.
\end{equation}

Using in the left-hand side the weak convergence \eqref{70}, and in the right-hand side
the almost everywhere convergence \eqref{70} of $u_n$ to $u_\infty$, assumption \eqref{num1} on the functions $F_n$ and Fatou's Lemma, one obtains 
 \begin{equation}
 \label{71bis}
\into F_\infty(x,u_\infty)\varphi\leq  \int_\Omega A(x) Du_{\infty}D\varphi <+\infty \,\,\, \forall \varphi\in H_0^1(\Omega),\,\, \varphi\geq 0,
\end{equation}
which implies the first assertion of \eqref{L}$_\infty$.

It remains to prove the second assertion of \eqref{L}$_\infty$. 

\bigskip
\noindent {\bf Second step}

We fix a function $\varphi \in H^1_0(\Omega), \; \varphi \geq 0$, and we
write \eqref{72} as 
\begin{equation}
\label{73}
\dys \int_\Omega A(x) Du_nD\varphi=
\displaystyle \int_{\{u_n\leq\delta\}} F_n(x,u_n) \varphi+\int_{\{u_n>\delta\}} F_n(x,u_n) \varphi.
\end{equation}

We pass now to the limit as $n$ tends to infinity for $\delta>0$ fixed in \eqref{73}. In the left-hand side we get (as before)

\begin{equation}
\label{74}
\int_\Omega A(x) Du_nD\varphi \to \int_\Omega A(x) Du_\infty D\varphi.
\end{equation}

For what concerns the first term of the right-hand side of \eqref{73} we use the a priori estimate \eqref{5701bis}. Since $D\varphi Z_\delta(u_n)$ tends to $D\varphi Z_\delta(u_\infty)$ strongly in $L^2(\Omega)^N$ while $A(x)Du_n $ tends to $A(x)Du_\infty \,$ weakly in $L^2(\Omega)^N$, we obtain
\begin{equation}
\label{75}
\dys\forall\delta>0,\,\,\limsup_{n} \int_{\{u_n\leq\delta\}} F_n(x,u_n)\varphi\leq
 \into A(x) Du_\infty D\varphi Z_\delta(u_\infty). 
\end{equation}
 
  Since
 $$
Z_\delta(u_\infty)\rightarrow \chi_{_{\{u_\infty=0\}}}\,\,\mbox{a.e. in } \Omega,\mbox{ as } \delta\to 0,
$$
and since $u_{\infty}\in H^1_{0}(\Omega)$ implies that $D u_\infty=0$ almost everywhere on the set  $\{x\in\Omega:u_\infty(x)=0\}$, the right-hand side of \eqref{75} tends to~$0$ when $\delta$ tends to~$0$.

 We have proved that 
\begin{equation}
\label{76}
\limsup_{n} \int_{\{u_n\leq\delta\}} F_n(x,u_n)\varphi \rightarrow 0 \mbox{ as } \delta \rightarrow 0.
\end{equation}

\bigskip
\noindent {\bf Third step}

Let us now observe that for every $\delta>0$ 
\begin{equation}
\label{74bis}
\int_{\{u_\infty=0\}} F_n(x,u_n)\chi_{_{\{u_n\leq\delta\}}}\varphi\leq\int_{\{u_n\leq\delta\}} F_n(x,u_n)\varphi.
\end{equation} 
Since $u_n$ converges almost everywhere to $u_\infty$, one has, for every $\delta>0$,
$$
\chi_{_{\{u_n\leq\delta\}}}\rightarrow \chi_{_{\{u_\infty\leq\delta\}}} \mbox{ a.e. on } \{x\in\Omega: u_\infty(x)\not=\delta\},
$$
and therefore
$$
\chi_{_{\{u_n\leq\delta\}}}\rightarrow 1\mbox{ a.e. on } \{x\in\Omega:u_\infty(x)=0\},
$$
while in view of assumption \eqref{num1}, one has
$$
F_n(x,u_n(x))\rightarrow F_\infty(x,u_\infty(x)) \mbox{ a.e.  } x\in\Omega.
$$
Applying Fatou's Lemma to the left-hand side of \eqref{74bis},  we obtain 
\begin{align*}
\dys \forall \delta > 0, \; \; \int_{\{u_\infty=0\}} F_\infty(x,u_\infty)\varphi\leq\limsup_{n}\int_{\{u_n\leq\delta\}} F_n(x,u_n)\varphi,
\end{align*}
which in view of \eqref{76} implies that
\begin{equation}
\label{74ter}
\int_{\{u_\infty=0\}}F_\infty(x,u_\infty)\varphi=0.
\end{equation}

\bigskip
\noindent {\bf Fourth step}

 Let us finally pass to the limit in $n$ for $\delta>0$ fixed in the second term of the right-hand side of \eqref{73}, namely in
\begin{align*}
\int_{\{u_n>\delta\}} F_n(x,u_n)\varphi=\into F_n(x,u_n)\chi_{_{\{u_n>\delta\}}}\varphi.
\end{align*}
Since in view of (\ref{eq0.1} {\it iv})
$$
 0\leq F_n(x,u_n)\chi_{_{\{u_n>\delta\}}}\varphi \leq \ h(x)\left(\frac{1}{\delta^\gamma}+1\right)\varphi\,\, \mbox{ a.e. }x\in\Omega,
$$
 since $h \varphi \in L^{1} (\Omega)$,
 since in view of assumption \eqref{num1} and of the almost everywhere convergence \eqref{70} of $u_n$ to $u_\infty$ one has
$$
F_n(x,u_n)\varphi \rightarrow F_\infty(x,u_\infty)\varphi\,\, \mbox{a.e. on } \Omega,
$$
 and finally since
$$
\chi_{_{\{u_n>\delta\}}}\rightarrow \chi_{_{\{u_\infty>\delta\}}} \mbox{ a.e. on } \{x\in\Omega:u_\infty(x)\not=\delta\},
$$
defining the set $\mathcal{C} \subset [0, + \infty [$ by
$$
\mathcal{C} = \{ \delta > 0,  \,    \mbox{ meas}\{x\in\Omega:u_\infty(x)=\delta\}>0 \,  \}
$$
(note that this set is at most countable),
and choosing  $\delta \not\in \mathcal{C}$, Lebesgue's dominated convergence Theorem implies that
\begin{align}
\label{lebs}
\int_{\{u_n>\delta\}} F_n(x,u_n)\varphi\rightarrow \int_{\{u_\infty>\delta\}} F_\infty(x,u_\infty)\varphi\,\mbox{ as } n\to+\infty,\,\,\forall \delta\not\in \mathcal{C}.
\end{align}

Since the set $ \mathcal{C}$ is at most a countable, choosing $\delta$ outside of the set $ \mathcal{C}$ and using the fact that the set $\{x\in\Omega:u_\infty(x)>\delta\}$ monotonically shrinks to the set $\{x\in\Omega:u_\infty(x)>0\}$ as $\delta$ tends to $0$, the fact that $F_\infty(x,u_\infty)\varphi$ belongs to $L^1(\Omega)$ (see \eqref{71bis}), and finally \eqref{74ter}, we have proved that 
\begin{align}
\label{712}
\begin{cases}
\dys\int_{\{u_\infty>\delta\}}F_\infty(x,u_\infty)\varphi\rightarrow\int_{\{u_\infty>0\}}F_\infty(x,u_\infty)\varphi=\into F_\infty(x,u_\infty)\varphi,\\
\mbox{as  } \delta\to 0, \delta\not\in  \mathcal{C}.
\end{cases}
\end{align}

\bigskip
\noindent {\bf Fifth step}

Passing to the limit in each term of \eqref{73}, first in $n$ for $\delta>0$ fixed with $\delta\not\in  \mathcal{C}$, and then for $\delta\not\in  \mathcal{C}$ which tends to $0$, and collecting the results obtained in \eqref{74}, \eqref{76}, \eqref{lebs} and \eqref{712}, we have proved that
\begin{align*}
\int_\Omega A(x) Du_\infty D\varphi= \into F_\infty(x,u_\infty) \varphi \,\,\,\, \forall \varphi\in H^1_0(\Omega),\, \varphi\geq 0,
\end{align*}
which is nothing but the second assertion in \eqref{L}$_\infty$.

We have proved a weaker version of Theorem~\ref{est}, where the strong $H_0^1(\Omega)$ convergence \eqref{num2} is replaced by the weak $H_0^1(\Omega)$ convergence \eqref{70}.

\bigskip
\noindent {\bf Sixth step}

Let us now prove that  \eqref{num2} (namely the strong $H_0^1(\Omega)$ convergence) holds true. 
Indeed, taking $u_n$ as test function in \eqref{L}$_n$, we have
$$
\into A(x) Du_n Du_n=\into F_n(x,u_n)u_n dx.
$$

Observe that in view of hypothesis \eqref{num1} and of convergence \eqref{70} we have
$$
F_n(x,u_n)u_n\rightarrow F_\infty(x,u_\infty)u_\infty \mbox{ a.e. } x\in\Omega,
$$
and that the functions $F_n(x,u_n)$ are equi-integrable: indeed for every\break measurable set $E\subset \Omega$, we have, using (\ref{eq0.1} {\it iv}), \eqref{Younggamma},   H\"older's and Sobolev's inequalities (see the proof of Theorem~\ref{EUS} above)
\begin{equation}
\begin{cases}
\label{836n}\vspace{0.1cm}
\dys 0\leq \int_E F_n(x,u_n) u_n\leq \int_E h(x)\left(\frac{1}{u_n^\gamma}+1\right)u_n\leq\\
\dys \leq \int_E h(x) \, \big((1 - \gamma ) u_n + \gamma + u_n \big) = \int_E h(x) \, \big((2 - \gamma ) u_n + \gamma \big) \leq \\
\dys \leq (2 - \gamma) \|h\|_{L^{r}(E)} \, C(|\Omega|, N, r) \, \|Du_n\|_{L^2(\Omega)^N} +\gamma \|h\|_{L^{1}(E)} \leq\\
\dys\leq c ( \|h\|_{L^{r}(E)} +  \|h\|_{L^{1}(E)} ),
\end{cases}
\end{equation}
where $c$ is a constant which does not depend on $E$ nor on $n$.
Therefore by Vitali's Theorem 
$$
F_n(x,u_n)u_n\rightarrow F_\infty(x,u_\infty)u_\infty \mbox{ in } L^1(\Omega) \mbox{ strongly}.
$$
On the other hand, taking $u_\infty$ as test function in \eqref{L}$_\infty$, we have
$$
\into A(x) Du_\infty Du_\infty=\into F_\infty(x,u_\infty)u_\infty dx.
$$

Therefore 
$$
\into A(x) Du_n Du_n\rightarrow \into A(x) Du_\infty Du_\infty.
$$
Together with \eqref{70}, this implies the strong convergence \eqref{num2}.

This completes the proof of the stability Theorem~\ref{est}.
\end{proof}

\begin{proof}[{\bf Proof of  the existence Theorem~\ref{EUS}.}]$\mbox{  }$

\indent Let $u_n$ be a solution to 
 \begin{equation}
 \label{81}
\begin{cases}
u_n\in H_0^1(\Omega),\\
\displaystyle - div\, A(x) D u_n  =T_n(F(x,u_n^+)) & \mbox{in} \, \mathcal{D}'(\Omega),
\end{cases} 
\end{equation}
where $T_n$ is the truncation at height $n$.

Since $T_n(F(x,s^+))$ is a bounded Carath\'eodory function defined on\break $\Omega\times \mathbb{R}$, Schauder's fixed point theorem implies that problem \eqref{81} has at least one solution. Since $F(x,s^+)\geq 0$, this solution is nonnegative by the weak maximum principle, and therefore $u_n^+=u_n$.

It is then clear that $u_n$ is a solution to problem \eqref{eqprima}$_n$ in the sense of Definition~\ref{sol},  where \eqref{eqprima}$_n$ is the problem \eqref{eqprima} with $F(x,u)$ replaced by $F_n(x,u_n)=T_n(F(x,u_n))$.

Moreover it is easy to see, considering the cases where $s_\infty>0$ and where $s_\infty=0$, that the functions $F_n(x,s)$ satisfy assumption \eqref{num1} with $$F_{\infty}(x,s)=F(x,s).$$ 
The stability Theorem~\ref{est} then implies that there exists a subsequence of $u_n$ whose limit $u_\infty$ is a solution to problem \eqref{eqprima} in the sense of Definition~\ref{sol}. 

This proves the existence Theorem~\ref{EUS}.
\end{proof}

\begin{proof}[{\bf Proof of  the regularity Proposition~\ref{lem1}.}]$\mbox{  }$

Using $G_k(u)$, $k>0$ as test function in \eqref{L}, we get
$$
\into A(x) DG_k(u)DG_k(u)=\into F(x,u) G_k(u)\quad \forall k>0.
$$
Setting $k=j+1$ with $j\geq 0$, this implies, using the coercivity \eqref{eq0.0} and the growth condition (\ref{eq0.1} {\it iv}), that
\begin{equation}
\label{num11}
\begin{cases}
\dys\alpha\into |DG_{j+1}(u)|^2&\leq\dys \into h(x)\left({{1}\over{u^\gamma}}+1\right)G_{j+1}(u)\leq \\&\dys\leq \int_{\{u>1\}} h(x)\left({{1}\over{u^\gamma}}+1\right)G_{j+1}(u)\leq \\&\dys\leq2\into h(x)\,G_{j+1}(u), \,\, \forall j\geq0.
\end{cases}
\end{equation}

Since $$G_{j+1}(s)=G_j(G_1(s))\quad \forall s\in \mathbb{R}, \, \forall j\geq0,$$
and since $G_1(u)\in H_0^1(\Omega)$,
setting $$\overline{u}=G_1(u),$$
we deduce from \eqref{num11} that $\overline{u}$ satisfies
\begin{equation*}
\begin{cases}
\overline{u}\in H_0^1(\Omega),\\
\displaystyle \into |DG_j(\overline{u})|^2\leq \frac{2}{\alpha}  \into h(x)G_j(\overline{u})\quad \forall j\geq0.
\end{cases}
\end{equation*}

A result of G. Stampacchia (see the proof of Lemma~5.1 and Lemma 4.1 in \cite{S}) (see also Section 5 in \cite{GMM3}) then implies that when $h\in L^t(\Omega)$ (hypothesis \eqref{nuch}), 
the function $\overline{u}$ belongs to $L^\infty (\Omega)$, 
and that there exists a constant $C(|\Omega|, N, t)$ which is nondecreasing in $|\Omega|$ such that
\begin{equation*}
\|\overline{u}\|_{L^\infty(\Omega)}\leq \frac{2}{\alpha} \,  C(|\Omega|, N, t) \, \|h\|_{L^t(\Omega)}.
\end{equation*}

Combined with 
$$
u=T_1(u)+G_1(u)=T_1(u)+\overline{u},
$$
this result implies that 
\begin{equation*}
\|u\|_{L^\infty(\Omega)}\leq 1+ \frac{2}{\alpha} \, C(|\Omega|, N, t) \, \|h\|_{L^t(\Omega)},
\end{equation*}
which proves Proposition~\ref{lem1}.
\end{proof}

\section{Comparison principle and proof of the uniqueness Theorem~\ref{uniqueness}}
\label{comparison}
 In this Section we prove a comparison result, assuming the ``almost nonincreasing monotonicity" condition \eqref{eq0.2} of $F(x,s)$ with respect to $s$. 
\begin{theorem}[\bf Comparison principle]
\label{prop0} 
Assume that the matrix $A$ satisfies \eqref{eq0.0}. Let $F_1(x,s)$ and $F_2(x,s)$ be two functions satisfying \eqref{car} and \eqref{eq0.1}  
(for the same or for different $\gamma$ and $h$). Assume moreover that 
\begin{equation}
\label{80}
\mbox{either } F_1(x,s) \mbox{ or } F_2(x,s) \mbox{ satisfies } \eqref{eq0.2},
\end{equation}
 and that  
\begin{equation}
                              \label{condfc}
F_1(x,s)\leq F_2(x,s)\,\,\, \mbox{a.e. } x\in\Omega,\quad \forall s\geq 0.
\end{equation}

 Let $u_1$ and $u_2$ be solutions in  the sense of Definition~\ref{sol} to problem \eqref{eqprima}$_1$ and \eqref{eqprima}$_2$, where \eqref{eqprima}$_1$ and \eqref{eqprima}$_2$ stand for \eqref{eqprima} with $F(x,u)$ replaced by $F_1(x,u_1)$ and $F_2(x,u_2)$. Then 
\begin{equation}
                               \label{condfc2}
u_1(x)\leq u_2(x) \mbox{ a.e. } x\in\Omega.
\end{equation}
\end{theorem}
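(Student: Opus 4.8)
The plan is to test the two equations against each other with a carefully chosen nonnegative test function that detects the region $\{u_1 > u_2\}$, and to exploit the ``almost nonincreasing'' condition \eqref{eq0.2} together with the first eigenvalue $\lambda_1$ of $-\mathrm{div}\,{}^s\!A(x)D$. The natural candidate is $\varphi = (u_1 - u_2)^+$, which belongs to $H_0^1(\Omega)$ and is nonnegative, hence admissible as a test function in \eqref{L} for both $u_1$ and $u_2$ (using the equivalent formulation \eqref{Lbis}). Subtracting the two weak formulations tested against $\varphi$, one gets
\begin{equation*}
\into A(x)D(u_1-u_2)\,D\varphi = \into \big(F_1(x,u_1)-F_2(x,u_2)\big)\varphi.
\end{equation*}
On the set $\{u_1 > u_2\}$ one has $D\varphi = D(u_1-u_2)$, so the left-hand side is $\int_{\{u_1>u_2\}} A(x)D(u_1-u_2)D(u_1-u_2) \geq \alpha \|D\varphi\|_{L^2(\Omega)^N}^2$ by coercivity \eqref{eq0.0} (after symmetrizing, since $A$ and ${}^s\!A$ give the same quadratic form). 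The crux is to bound the right-hand side from above by something strictly smaller than $\alpha\|D\varphi\|^2$, which will force $\varphi \equiv 0$.

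For the right-hand side, suppose first that $F_2$ satisfies \eqref{eq0.2}. On $\{u_1 > u_2\}$, write
\begin{equation*}
F_1(x,u_1)-F_2(x,u_2) \leq F_2(x,u_1)-F_2(x,u_2) = \big(F_2(x,u_1)-\lambda u_1\big)-\big(F_2(x,u_2)-\lambda u_2\big) + \lambda(u_1-u_2),
\end{equation*}
where the first inequality uses \eqref{condfc} and the second step uses \eqref{eq0.2} with $t=u_2 \leq s=u_1$, which makes the bracketed difference $\leq 0$. Hence on $\{u_1 > u_2\}$ one has $F_1(x,u_1)-F_2(x,u_2) \leq \lambda(u_1-u_2) = \lambda\varphi$, and therefore
\begin{equation*}
\into \big(F_1(x,u_1)-F_2(x,u_2)\big)\varphi \leq \lambda \into \varphi^2.
\end{equation*}
(If instead $F_1$ satisfies \eqref{eq0.2}, one estimates $F_1(x,u_1)-F_2(x,u_2)\leq F_1(x,u_1)-F_1(x,u_2)\leq \lambda\varphi$ in the same way, using \eqref{condfc} on the $F_2$ term.) Combining, $\alpha\|D\varphi\|_{L^2(\Omega)^N}^2 \leq \lambda\|\varphi\|_{L^2(\Omega)}^2$. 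By the variational characterization of $\lambda_1$ as the symmetrized-operator Rayleigh quotient, $\into {}^s\!A(x)D\varphi D\varphi \geq \lambda_1 \into \varphi^2$ for all $\varphi \in H_0^1(\Omega)$; since $\into A(x)D\varphi D\varphi = \into {}^s\!A(x)D\varphi D\varphi$, we get $\lambda_1 \|\varphi\|_{L^2(\Omega)}^2 \leq \into A(x)D\varphi D\varphi \leq \lambda \|\varphi\|_{L^2(\Omega)}^2$. Since $\lambda < \lambda_1$, this forces $\|\varphi\|_{L^2(\Omega)} = 0$, i.e. $(u_1-u_2)^+ = 0$ a.e., which is exactly \eqref{condfc2}.

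The main obstacle I anticipate is a technical one rather than a conceptual one: justifying that $(u_1-u_2)^+$ is genuinely an admissible test function and that all integrals split cleanly over $\{u_1 > u_2\}$. In particular, the quantity $\into F_i(x,u_i)\varphi$ is a priori only known to be finite for each individual solution (that is part of Definition~\ref{sol} via \eqref{Lbis}), so one must check that the \emph{difference} $\big(F_1(x,u_1)-F_2(x,u_2)\big)\varphi$ is integrable and that the subtraction of the two weak formulations is legitimate; this is fine because each term $F_i(x,u_i)\varphi \in L^1(\Omega)$ by Remark~\ref{32bis}. One should also note that the singular values of $F_i(x,\cdot)$ occur only on $\{u_i = 0\}$, and on $\{u_1 > u_2\}$ the possibly-infinite value $F_2(x,u_2)$ with $u_2 = 0$ is controlled by Proposition~\ref{330} (which gives $F_2(x,0) = 0$ a.e. on $\{u_2 = 0\}$), so no genuine singularity survives in the estimate. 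Once these integrability points are in place, the argument above is the whole proof, and the uniqueness Theorem~\ref{uniqueness} follows immediately by applying the comparison principle twice with $F_1 = F_2 = F$ to two solutions $u_1, u_2$, obtaining $u_1 \leq u_2$ and $u_2 \leq u_1$.
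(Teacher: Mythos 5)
Your proposal is correct and follows essentially the same route as the paper's proof: test both formulations with $(u_1-u_2)^+$, use \eqref{condfc} plus the ``almost nonincreasing'' hypothesis \eqref{eq0.2} (inserting the cross term $F_2(x,u_1)$ or $F_1(x,u_2)$ according to which $F_i$ satisfies it) to bound the right-hand side by $\lambda\int |(u_1-u_2)^+|^2$, and conclude via the Rayleigh-quotient characterization of $\lambda_1$ for ${}^s\!A$, with the same finiteness checks (via \eqref{32bis2} and \eqref{condfc}) to legitimize the algebraic manipulations. The only difference is cosmetic: the paper first moves $\lambda u_i(u_1-u_2)^+$ to the left and shows the resulting integrand is nonpositive a.e., whereas you bound $F_1(x,u_1)-F_2(x,u_2)\leq\lambda(u_1-u_2)$ directly on $\{u_1>u_2\}$.
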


\begin{proof}[{\bf Proof of the uniqueness Theorem~\ref{uniqueness}.}]$\mbox{  }$

Applying this comparison principle to the case where $F_1(x,s)=F_2(x,s)=$ $=F(x,s)$, with $F(x,s)$ satisfying \eqref{eq0.2},  immediately proves the uniqueness\break Theorem~\ref{uniqueness}.
\end{proof}

\begin{proof}[\bf Proof of Theorem~\ref{prop0}.]$\mbox{  }$

Since $(u_1-u_2)^+\in H^1_0(\Omega)$, we can take it as test function in \eqref{L}$_1$  and add to both sides  of \eqref{L}$_1$ the finite term $\displaystyle -\lambda \into  u_1(u_1-u_2)^+$. The same holds for \eqref{L}$_2$. This gives 
\begin{align*}
\begin{cases}
\dys\int_\Omega A(x) Du_i D (u_1-u_2)^+ -\lambda \into  u_i(u_1-u_2)^+=\\
\dys= \into (F_i(x,u_i)-\lambda u_i)(u_1-u_2)^+, \,\,\, i=1,2.
\end{cases}
\end{align*}
Taking the difference between these two equations it follows that
\begin{align*}
\begin{cases}
\dys\int_\Omega A(x) D(u_1-u_2)^+D (u_1-u_2)^+-\lambda \into  |(u_1-u_2)^+|^2=\\ 
\dys= \into \big( (F_1(x,u_1)-\lambda u_1)- (F_2(x,u_2)-\lambda u_2) \big) (u_1-u_2)^+ .
\end{cases}
\end{align*}
Using the coercivity \eqref{eq0.0} and the characterization of the first eigenvalue $\lambda_1$ of the operator $-div\,{}^s\! A(x)D$ in $H_0^1(\Omega)$, we get
\begin{align}
\label{rg}
\begin{cases}
(\lambda_1-\lambda) \dys\int_\Omega |(u_1-u_2)^+|^2\leq\\ 
\dys\leq \into \big( (F_1(x,u_1)-\lambda u_1)-(F_2(x,u_2)-\lambda u_2) \big) (u_1-u_2)^+ .
\end{cases}
\end{align}

\bigskip

Let us prove that 
\begin{equation}
\label{734}
\big( (F_1(x,u_1)-\lambda u_1)- (F_2(x,u_2)-\lambda u_2 )\big) (u_1-u_2)^+\leq0 \,\,\mbox{a.e. } x\in\Omega, 
\end{equation}
or equivalently that
\begin{equation}
\label{85bis}
(F_1(x,u_1)-\lambda u_1)- (F_2(x,u_2)-\lambda u_2)\leq0 \,\,\mbox{a.e. } x\in\{u_1>u_2\}.
\end{equation}

We first observe that since $u_1$ and $u_2$ are solutions to \eqref{eqprima}$_1$ and \eqref{eqprima}$_2$ in the sense of Definition~\ref{sol}, one has 
(see \eqref{32bis2})
\begin{equation}
\label{736}
F_1(x,u_1) \mbox{ and } F_2(x,u_2) \mbox{ are nonnegative and finite a.e. } x\in\Omega.
\end{equation}

\bigskip

In order to prove \eqref{85bis}, let us first consider the case where $F_1$ satisfies \eqref{eq0.2}. In this case we have
\begin{equation}
\label{835}
F_1(x,u_1)-\lambda u_1\leq F_1(x,u_2)-\lambda u_2 \,\,\mbox{a.e. } x\in\{u_1>u_2\}.
\end{equation}
We observe that hypothesis \eqref{condfc} implies that 
$$
F_1(x,u_2)\leq F_2(x,u_2)\mbox{ a.e. } x\in\Omega,
$$
and therefore, using \eqref{736}, that 
$$
F_1(x,u_2) \mbox{ is nonnegative and finite a.e. } x\in\Omega.
$$

It is therefore licit to write that 
\begin{equation}
\label{836}
\begin{cases}
(F_1(x,u_1)-\lambda u_1)-(F_2(x,u_2)-\lambda u_2)=\\
= (F_1(x,u_1)-\lambda u_1)-(F_1(x,u_2)-\lambda u_2)\,+\\
+ (F_1(x,u_2)-\lambda u_2)-(F_2(x,u_2)-\lambda u_2) \mbox{ a.e. } x\in\Omega.
\end{cases}
\end{equation}
Since the first line of the right-hand side of \eqref{836} is nonpositive on $\{u_1>u_2\}$ by \eqref{835}, and since the second line of this right-hand side, namely $F_1(x,u_2)-F_2(x,u_2)$, is nonpositive by \eqref{condfc}, we have proved \eqref{85bis}.

\bigskip

Let us now consider the case where $F_2$ satisfies \eqref{eq0.2}. In this case we have 
\begin{equation}
\label{837}
F_2(x,u_1)-\lambda u_1\leq F_2(x,u_2)-\lambda u_2 \,\,\mbox{a.e. } x\in\{u_1>u_2\}.
\end{equation}
We observe that, together with the fact that $F_2(x,u_2)$ is finite almost everywhere on $\Omega$ (see \eqref{736}), this result implies that 
$$
F_2(x,u_1) \mbox{ is nonnegative and finite a.e. } x\in\{u_1>u_2\}.
$$

It is therefore licit to write that 
\begin{equation}
\label{838}
\begin{cases}
(F_1(x,u_1)-\lambda u_1)-(F_2(x,u_2)-\lambda u_2)=\\
= (F_1(x,u_1)-\lambda u_1)-(F_2(x,u_1)-\lambda u_1)\,+\\
+ (F_2(x,u_1)-\lambda u_1)-(F_2(x,u_2)-\lambda u_2) \mbox{ a.e. }x\in\{u_1>u_2\}.
\end{cases}
\end{equation}
Since the second line of the right-hand side of \eqref{838} is nonpositive on $\{u_1>u_2\}$ by \eqref{837}, and since the first line of this right-hand side, namely $F_1(x,u_1)-F_2(x,u_1)$, is nonpositive by \eqref{condfc}, we have again proved \eqref{85bis}.

\bigskip

In both cases we have proved that the right-hand side of \eqref{rg} is nonpositive when assumptions \eqref{80} and \eqref{condfc} are assumed to hold true. Since $\lambda_1-\lambda>0$ by hypothesis \eqref{eq0.2}, this implies that  $(u_1-u_2)^+=0$. 

This proves \eqref{condfc2}.
\end{proof}

\begin{remark}
\label{rem82}
Consider the case where the matrix $A$ satisfies \eqref{eq0.0} and is symmetric and where the function $F$ is defined by
\begin{equation}
\label{unf2}
F(x,s)= \lambda_1 T_k(s) \quad \forall s \geq 0,
\end{equation}
where $T_k$ is the truncation at height $k>0$, for some $k$ fixed, and where $\lambda_1$ and $\phi_1$ are the first eigenvalue and eigenvector of the operator  $-div \,A(x) D$ in $H_0^1(\Omega)$, namely
\begin{equation}
\label{unf3}
\begin{cases}
\dys\phi_1\in H_0^1(\Omega),\, \phi_1\geq 0, \, \into |\phi_1|^2=1,\\
\dys -div\, A(x)D\phi_1=\lambda_1\phi_1 \;\;\;\mbox{  in } \mathcal{D'}(\Omega).
\end{cases}
\end{equation}
The function $F$ defined by \eqref{unf2} satisfies assumptions \eqref{car}, \eqref{eq0.1} and \eqref{unf}, but does not satisfy \eqref{eq0.2}.

Recall that $\phi_1$, the solution to \eqref{unf3}, belongs to $L^\infty(\Omega)$. Then for every $t$ with 
$0\leq t \leq  {k} / {\|\phi_1\|_{L^\infty(\Omega)}}$,
the function 
$$
u=t\phi_1
$$
is a solution to \eqref{eqprima} in the classical weak sense, and therefore in the sense of Definition~\ref{sol}. 

This proves that uniqueness does not hold if assumption \eqref{eq0.2} is replaced by the weaker assumption \eqref{unf}.
\qed

\end{remark}

\section{Proofs of the homogenization Theorem~\ref{homogenization} and of the corrector Theorem~\ref{corrector}}

\begin{proof}[{\bf Proof of the homogenization Theorem~\ref{homogenization}}] $\mbox{}$
\medskip

\noindent {\bf First step}

Theorem~\ref{EUS} asserts that  for every $\eps>0$ there exists at least one solution to problem \eqref{eqprimah} in the sense of Definition~\ref{sol}, namely a least one $u^\eps$ which satisfies \eqref{sol1h} and \eqref{Leps}.

Proposition~\ref{lem2} implies that every such $u^\eps$ satisfies
\begin{equation}
\label{551}
\begin{cases}
\|\tilde u^\eps\|_{H_0^1(\Omega)}=\|u^\eps\|_{H_0^1(\Omega^\eps)} \leq\\
\leq C(|\Omega^\eps |,N,\alpha,\gamma, r) \, \big( \|h\|_{L^{r}(\Omega^\eps)} + \|h\|_{L^{1}(\Omega^\eps)}^{1/2} \big)  \leq\\
\leq C(|\Omega|,N,\alpha,\gamma, r) \, \big( \|h\|_{L^{r}(\Omega)} + \|h\|_{L^{1}(\Omega)}^{1/2} \big).
\end{cases}
\end{equation}

Estimate \eqref{551} implies that there exists a function $u^0$, and  a subsequence $\tilde{u^\eps}$, still labelled by $\eps$, which  satisfies 
\begin{equation}
\label{ae}
\tilde{u^\eps}\rightharpoonup u^0  \text{  in } H_0^1(\Omega)  \text{  weakly  and a.e. in } \Omega.
\end{equation}

Observe that $u^0(x)\geq 0$ a.e. $x\in\Omega$.

\bigskip
\noindent {\bf Second step}

In view of assumptions \eqref{cond1}, \eqref{cond1bis} and \eqref{cond2}, one has
$$
w^\eps \psi \in H_0^1(\oeps)\cap L^\infty(\oeps) \quad \forall\psi \in H_0^1(\Omega)\cap L^\infty(\Omega),
$$
and
\begin{align*}
\begin{cases}
\|w^\eps \psi  \|_{H_0^1(\oeps)}=\|w^\eps \psi  \|_{H_0^1(\Omega)}\leq\\
\leq \|w^\eps\|_{L^\infty(\Omega)}\|D\psi\|_{L^2(\Omega)^N}+\|\psi\|_{L^\infty(\Omega)}\|Dw^\eps\|_{L^2(\Omega)^N}\leq\\
\leq C^*(\|D\psi\|_{L^2(\Omega)^N}+\|\psi\|_{L^\infty(\Omega)}),
\end{cases}
\end{align*}
where
 \begin{equation*}
 C^*=\max_\eps \{1,\|Dw^\eps\|_{L^2(\Omega)^N}\}.
 \end{equation*} 
 
We now fix $\psi\in H^1_0(\Omega)\cap L^\infty(\Omega),\,\, \psi \geq 0$, and we use $\varphi^\eps=w^\eps \psi \in H^1_0(\Omega^\eps)$, $w^\eps \psi\geq 0$, as test function in \eqref{Leps}. We obtain
\begin{equation*}
\int_{\oeps} A(x)D u^\eps D\psi \,w^\eps +\int_{\oeps} A(x)D u^\eps Dw^\eps\, \psi=\int_{\oeps} F(x,u^\eps) w^\eps \psi,
\end{equation*}
which using \eqref{57bis} implies that
\begin{equation}\label {aa}
\int_{\Omega} A(x)D \tilde{u^\eps} D\psi \,w^\eps +\int_{\Omega} A(x)D \tilde{u^\eps} Dw^\eps\, \psi=\int_{\Omega} \widetilde{F(x,u^\eps)} w^\eps \psi.
\end{equation}

Equation \eqref{aa} in particular implies by \eqref{551} and \eqref{cond3} that 
\begin{equation}\label {bound}
\int_{\Omega}\widetilde{F(x,u^\eps) }w^\eps \psi\leq C
\end{equation}
where $C$ is independent of $\eps$. 

We now claim that for a subsequence, still labelled by $\eps$,
\begin{equation}\label {concar}
\chi_{_{\Omega^{\eps}}}\rightarrow 1\,\,\, \mbox {a.e. in} \,\,\,\Omega;
\end{equation}
indeed, from $w^\eps \chi_{_{\Omega^{\eps}}}=w^\eps\,\, \mbox {a.e. in} \,\,\,\Omega$, which results from \eqref{cond2} (see also  \eqref{57ter}), and from \eqref{cond3} we  get
\begin{align*}
\begin{cases}
\dys\chi_{_{\Omega^{\eps}}}=\chi_{_{\Omega^{\eps}}}w^\eps+\chi_{_{\Omega^{\eps}}}(1-w^\eps)=w^\eps+\chi_{_{\Omega^{\eps}}}(1-w^\eps) \rightharpoonup  1 \\
\dys\mbox{in } L^{\infty}(\Omega) \mbox{ weakly-star}, 
\end{cases}
\end{align*}
which implies that
$$
\into |\chi_{_{\Omega^{\eps}}}-1|=\into (1-\chi_{_{\Omega^{\eps}}})  \rightarrow 0,
$$
which implies \eqref{concar} (for a subsequence).

We deduce from \eqref{concar} that for almost every $x_0$ fixed in $\Omega$ there exists $\eps_0(x_0)$ such that $ \chi_{_{\Omega^{\eps}}}(x_0)=1$ for every $\eps \leq \eps_0(x_0)$, which means that $x_0 \in \Omega^{\eps}$ for every $\eps \leq \eps_0(x_0).$ This implies that  
$$
\widetilde{F(x,u^\eps)}(x_0)=F(x,u^\eps)(x_0)=F(x,\tilde{u^\eps})(x_0)\quad  \forall \eps \leq \eps_0(x_0).
$$
Therefore, using   \eqref{ae}, we get 
\begin{equation*}
\widetilde{F(x,u^\eps)}(x_0)=F(x,\tilde{u^\eps}(x_0))\rightarrow F(x,u^0 (x_0)) \mbox{  as } \eps \rightarrow 0,
\end{equation*}   
or in other terms 
\begin{equation}
\label{good}
\widetilde{F(x,u^\eps)}\rightarrow F(x,u^0)  \mbox{ a.e. } x\in\Omega.
\end{equation}

Using \eqref{bound}, \eqref{cond3} and \eqref{good}  and applying Fatou's Lemma implies that
\begin{equation}\label{stimal1}
\int_{\Omega}F(x,u^0)  \psi<+\infty\,\,\,\forall \psi\in H_0^1(\Omega)\cap L^\infty(\Omega),\,\,\psi\geq0.
\end{equation}

\bigskip
\noindent {\bf Third step}

Let us now fix $\phi \in \mathcal{D}(\Omega), \phi \geq 0$,
and take $\psi = \phi$ in \eqref{aa}.
Since in view of \eqref{cond5} one has
\begin{equation*}
\begin{cases}
\dys\int_{\Omega} A(x) D\tilde{u^\eps} D w^\eps\, \phi=\int_{\Omega} {}^t\!A(x)D{w^\eps} D(\phi \tilde{u^\eps})-\int_{\Omega} {}^t\!A(x)D{w^\eps} D\phi\, \tilde{u^\eps}=\\\dys=\langle \mu^\eps,\phi \tilde{u^\eps}\rangle_{H^{-1}(\Omega),H_0^1(\Omega)}-\int_{\Omega} {}^t\!A(x)D{w^\eps} D\phi\, \tilde{u^\eps},
\end{cases}
\end{equation*}
equation \eqref{aa} implies that  
\begin{align}
\label{esth5}
\begin{cases}
\dys\int_{\Omega} A(x)D \tilde{u^\eps} D\phi\, w^\eps+\langle \mu^\eps,\phi \tilde{u^\eps} \rangle_{H^{-1}(\Omega),H_0^1(\Omega)}
\dys-\int_{\Omega} {}^t\!A(x)D{w^\eps} D\phi \,\tilde{u^\eps}=\\\dys=\int_{\Omega}\widetilde{F(x,u^\eps)} w^\eps \phi \quad
\forall \phi\in \mathcal{D}(\Omega), \phi \geq 0.
\end{cases}
\end{align}

 Using \eqref{ae}, \eqref{cond2}, \eqref{cond3} and \eqref{cond5}, we can easily pass to the limit in the left-hand side of \eqref{esth5}, and we obtain 
\begin{align}
\label{zv2}
\begin{cases}
\dys \int_{\Omega} A(x)D \tilde{u^\eps} D\phi\, w^\eps+\langle \mu^\eps, \phi\tilde{u^\eps} \rangle_{H^{-1}(\Omega),H_0^1(\Omega)}
\dys-\int_{\Omega} {}^t\!A(x)D{w^\eps} D\phi\, \tilde{u^\eps}\rightarrow
\\\dys\rightarrow\int_{\Omega} A(x)D u^0 D\phi+\langle \mu,\phi u^0\rangle_{H^{-1}(\Omega),H_0^1(\Omega)}.
\end{cases}
\end{align}

As far as the right-hand side of \eqref{esth5} is concerned we split it for every $\delta>0$ as
\begin{equation}\label{rr}
\int_{\Omega} \widetilde {F(x,u^\eps)}w^\eps \phi=\int_\Omega \widetilde{F(x,u^\eps)}w^\eps \phi \chi_{_{\{0\leq \tilde{u^\eps} \leq \delta\}}}  +\int_\Omega \widetilde{F(x,u^\eps)}w^\eps \phi \chi_{_{\{  \tilde{u^\eps} > \delta\}}}.
\end{equation}

\bigskip
\noindent {\bf Fourth step}

We now use $\phi^\eps= w^\eps \phi Z_{\delta}(u^\eps)$ as test function in \eqref{Leps}, where the function $Z_{\delta}$ is defined by \eqref{num23bis} and where $\phi \in \mathcal{D}(\Omega)$, $\phi\geq 0$. Note that $\phi^\eps\in H_0^1(\oeps) \cap $ $\cap L^\infty (\oeps)$, $\phi^\eps\geq 0$ in view of \eqref{cond2}. We get
\begin{align*}
\begin{cases}
\dys\int_{\oeps} F(x,u^\eps) w^\eps \phi Z_{\delta}(u^\eps)=\\\vspace{0.1cm}
\dys=\int_{\oeps} A(x) Du^\eps D w^\eps\, \phi Z_\delta(u^\eps) + \int_{\oeps}A(x)D u^\eps D\phi\, w^\eps Z_{\delta}(u^\eps) \, + \\
\dys+\int_{\oeps} A(x) Du^\eps D u^\eps\, Z'_\delta(u^\eps)w^\eps\phi,
 \end{cases}
\end{align*}
which implies, since $Z_\delta(s)=1$ for $0\leq s\leq \delta$ and  since $Z_\delta$ is nonincreasing, that
\begin{align*}
\begin{cases}
\dys\int_{\oeps} F(x,u^\eps) w^\eps \phi \chi_{_{\{0\leq u^\eps \leq \delta\}}} \leq\\
 \dys\leq \int_{\oeps} A(x) Du^\eps D w^\eps\, \phi Z_\delta(u^\eps)+\int_{\oeps} A(x)D u^\eps D\phi \,w^\eps Z_{\delta}(u^\eps).
 \end{cases}
\end{align*}
In view of the definition \eqref{defex} of the extension by zero and of \eqref{57bis}, we get
\begin{align}
\label{912}
\begin{cases}
\dys\int_{\Omega} \widetilde{F(x,u^\eps)} w^\eps \phi \chi_{_{\{0\leq \tilde{u^\eps} \leq \delta\}}} \leq\\
 \dys\leq \int_{\Omega} A(x) D\tilde{u^\eps} D w^\eps\, \phi Z_\delta(\tilde{u^\eps})+\int_{\Omega} A(x)D \tilde{u^\eps} D\phi \,w^\eps Z_{\delta}(\tilde{u^\eps}).
 \end{cases}
\end{align}

Let us define the function $Y_\delta : [0,+\infty[\rightarrow[0,+\infty[$ by
$$
Y_\delta(s)=\int_0^s Z_\delta(\sigma) d\sigma,\quad \forall s\geq 0,
$$
and observe that  $Y_\delta(u^\eps)\in H_0^1(\oeps)$ and $\widetilde{Y_\delta(u^\eps)}=Y_\delta(\tilde{u^\eps})$. Using \eqref{cond5}, we have
\begin{align} 
\label{zv1}
\begin{cases}
\dys  \into A(x) D\tilde{u^\eps} D w^\eps \phi Z_\delta(\tilde{u^\eps}) = \into{}^t\! A(x) D w^\eps  DY_\delta(\tilde{u^\eps})\phi=\\
\dys= \into{}^t\! A(x) D w^\eps  D(\phi Y_\delta(\tilde{u^\eps}))-\into{}^t\! A(x) D w^\eps  D\phi\, Y_\delta(\tilde{u^\eps})=
\\ \dys= \langle \mu^\eps,\phi Y_{\delta}(\tilde{u^\eps}) \rangle_{H^{-1}(\Omega),H_0^1(\Omega)}- \into {}^t\!A(x) Dw^\eps D\phi \,Y_\delta(\tilde{u^\eps}).
\end{cases}
\end{align}

Using now \eqref{cond5}, \eqref{ae}, the fact that
$$
Y_\delta(\tilde{u^\eps})\rightharpoonup Y_\delta(u^0)\mbox{ in } H_0^1(\Omega) \mbox{ weakly and } L^2 (\Omega) \mbox{ strongly},
$$
and \eqref{cond3} proves that the right-hand side of \eqref{zv1} tends to
$$
\langle\mu, \phi Y_\delta(u^0)\rangle_{H^{-1}(\Omega), H_0^1(\Omega)}
$$ 
as $\eps$ tends to zero for $\delta>0$ fixed.

Turning back to \eqref{912}, using \eqref{zv1} and the latest result, and passing to the limit in the last term of \eqref{912}, 
we have proved that for every fixed $\delta>0$ 
\begin{align} 
\label{esth4}
\begin{cases}
\dys\limsup_{\eps}\int_{\Omega} \widetilde{F(x,u^\eps)} w^\eps \phi \chi_{_{\{0\leq \tilde{u^\eps} \leq \delta\}}} \leq
\\\dys\leq \langle \mu,\phi Y_{\delta}(u^0) \rangle_{H^{-1}(\Omega),H_0^1(\Omega)}+\int_{\Omega} A(x)Du^0D\phi\, Z_{\delta}(u).
\end{cases}
\end{align}

We now pass to the limit  in \eqref{esth4} as $\delta$ tends to zero.

For the first term of the right-hand side of \eqref{esth4}, 
we use the fact that
\begin{equation}
\label{9.200}
0\leq Z_\delta(u^0)\leq 1,\quad Z_\delta(u^0)\rightarrow \chi_{\{u^0=0\}} \,\, \mbox{ a.e. in } \Omega  \quad \mbox{ as } \delta\to 0,
\end{equation}
and
\begin{equation}
\label{9.201}
Du^0=0   \,\,\mbox{Ê a.e. }  \,\, x \in \{ x \in \Omega:u^0(x)=0\} \,\,\mbox{Ê since } \,\,  u^0\in H^1_0(\Omega),
\end{equation}
imply that 
\begin{equation*}
D Y_\delta (u^0) =  Z_\delta (u^0) D u^0 \rightarrow \chi_{ \{u^0=0\} } D u^0 = 0  \;\; \mbox{ strongly in }  L^2 (\Omega)^N ; 
\end{equation*}
this implies the strong $H^1_0 (\Omega)$ convergence of $Y_\delta (u^0)$ to $0$, and therefore that
\begin{align*}
\langle \mu,\phi Y_\delta(u^0)\rangle_{H^{-1}(\Omega),H_0^1(\Omega)} \rightarrow 0 \quad \mbox{ as }  \delta\to 0.
\end{align*}

For the second term of the right-hand side of \eqref{esth4} we have, using again \eqref{9.200} and \eqref{9.201},

$$
\int_{\Omega} A(x)Du^0D\phi \,  Z_{\delta}(u^0)\rightarrow \int_{\Omega} A(x)Du^0D\phi  \, \chi_{\{u^0=0\}}=0  \mbox{ as }  \delta\to 0.
$$

As far as the first term of the right-hand side of  \eqref{rr} is concerned, we have we proved that
\begin{equation}
\label{home}
\limsup_{\eps} \int_\Omega \widetilde {F(x,{u^\eps})} w^\eps \phi\chi_{_{\{0\leq \tilde{u^\eps} \leq \delta\}}} \rightarrow 0 \quad \mbox{ as }  \delta\to 0.
\end{equation}

\bigskip
\noindent {\bf Fifth step}

Let us now pass to the limit in the second term of the right-hand side of  \eqref{rr}.

Observe that there is at most a countable set $\mathcal{C}^0$ of values of $\delta > 0$ such that 
$$
\mbox{meas}\{x\in\Omega:u^0(x)=\delta\}>0 \mbox{ if } \delta\in \mathcal{C}^0.
$$

From now on we will often choose $\delta > 0$ outside of  this set  $\mathcal{C}^0$.

Using \eqref{good}, \eqref{cond3}, \eqref{ae}, the fact that
$$
\forall\delta>0,\, \chi_{_{\{ \tilde{u^\eps} > \delta\}}}\rightarrow \chi_{_{\{u^0 > \delta\}}}\mbox{ a.e. } x\not\in\{u^0=\delta\},
$$
and therefore that
$$
\forall\delta\not\in \mathcal{C}^0,\,  \chi_{_{\{ \tilde{u^\eps} > \delta\}}}\rightarrow \chi_{_{\{u^0 > \delta\}}}\mbox{ a.e. } x\in\Omega,
$$
and the estimate (see (\ref{eq0.1} {\it iv})), which yields
\begin{align*}
\begin{cases}
\dys0\leq\widetilde{F(x,u^\eps)}w^\eps\phi(x) \chi_{_{\{ \tilde{u^\eps} > \delta\}}}\leq\\
\dys \leq h(x)\left(\frac{1}{(\tilde{u^\eps})^\gamma}+1\right)\phi(x)\chi_{_{\{ \tilde{u^\eps} > \delta\}}}\leq h(x)\left(\frac{1}{\delta^\gamma}+1\right)\phi(x)\mbox{ a.e. }x\in\Omega,
\end{cases}
\end{align*}
Lebesgue's dominated convergence Theorem implies that
\begin{equation*}
\lim_{\eps}\int_\Omega \widetilde{F(x,u^\eps)}w^\eps \phi \chi_{_{\{ \tilde{u^\eps} > \delta\}}} =\int_\Omega F(x,u^0)\phi \chi_{_{\{u^0> \delta\}}} \, \forall\delta\not\in \mathcal{C}^0.
\end{equation*}
Using \eqref{stimal1}  and Lebesgue's dominated convergence Theorem, we  pass to the limit in this equality when $\delta\not\in \mathcal{C}^0$ tends to zero. We obtain 
\begin{equation}
\label{convf2}
\lim_{\delta\not\in \mathcal{C}^0}\lim_{\eps}\int_\Omega \widetilde{F(x,u^\eps)}w^\eps \phi\chi_{_{\{ \tilde{u^\eps} > \delta\}}} =\into F(x,u^0)\phi \chi_{_{\{ u^0> 0\}}} .
\end{equation}

We now want to prove that 
\begin{equation}\label{subfinal} 
\int_{_{\{u^0=0\}}} F(x,u^0) \phi=0.
\end{equation}

Since $\tilde{u^\eps}$ converges almost everywhere to $u^0$, 
one has,  for almost every $x_0\in\{x\in\Omega:u^0(x)=0\}$,
 $$
 \tilde{u^\eps}(x_0)\to 0\mbox{ as } \eps\to 0,
 $$ 
 and therefore $\tilde{u^\eps}(x_0)<\delta$ for every $\eps<\eps_0(x_0)$. This implies that for every $\delta > 0 $
$$
\chi_{\{0\leq \tilde{u^\eps} \leq \delta\}}\rightarrow 1 \mbox{ a.e. } x\in \{x\in\Omega:u^0(x)=0\}.
$$

Using this fact, \eqref{good}, \eqref{cond3} and Fatou's Lemma
 for $\delta>0$ fixed we get
$$
\int_{_{\{u^0=0\}}} F(x,u^0) \phi \leq \liminf_{\eps} \int_{\{u^0=0\}} \widetilde {F(x,{u^\eps})} w^\eps \phi\chi_{_{\{0\leq \tilde{u^\eps} \leq \delta\}}} \,\forall\delta>0,
$$
which, passing to the limit with $\delta$ which tends to zero and using \eqref{home} gives \eqref{subfinal}. This implies that
\begin{equation}\label{final} 
\into F(x,u^0)\phi\chi_{_{\{ u^0> 0\}}} =\int_\Omega F(x,u^0)\phi.
\end{equation}

\bigskip
\noindent {\bf Sixth step}

We come back to \eqref{esth5}. Collecting together \eqref{zv2}, \eqref{rr}, \eqref{home}, \eqref{convf2}  and \eqref{final} we have proved that
\begin{equation}
\label{antefinalreg}
\begin{cases}
\forall \phi \in \mathcal{D}(\Omega),\, \phi \geq 0,\\
\dys\int_{\Omega} A(x)Du^0 D\phi+  \langle \mu, u^0 \phi \rangle_{H^{-1}(\Omega),H_0^1(\Omega)}= \int_{\Omega} F(x,u^0) \phi.
\end{cases}
\end{equation}
Using \eqref{57bis2} (see footnote$\;^{\ref{footnote1}}$), this is equivalent to
\begin{equation}
\label{antefinalregbis}
\begin{cases}
\forall \phi \in \mathcal{D}(\Omega),\, \phi \geq 0,\\
\dys\int_{\Omega} A(x)Du^0 D\phi+ \int_\Omega u^0 \phi d\mu= \int_{\Omega} F(x,u^0) \phi.
\end{cases}
\end{equation}

\bigskip
\noindent {\bf Seventh step}

Let us now take $\psi\in H_0^1(\Omega)\cap L^\infty (\Omega)$, $\psi\geq0$. 

Consider a sequence $\psi_n$ such that
\begin{align*}
\begin{cases}
\dys\psi_n\in\mathcal{D}(\Omega),\, \psi_n\geq 0, \,\|\psi_n\|_{L^\infty(\Omega)}\leq C,\\
\dys \psi_n\rightarrow \psi \, H_0^1(\Omega) \mbox{ strongly, a.e. } x\in \Omega,
\end{cases}
\end{align*}
and define  
$$
\hat\psi_n=\inf\{\psi_n, \psi\};
$$
then
\begin{align*}
\begin{cases}
\dys\hat\psi_n\in H_0^1(\Omega)\cap L^\infty (\Omega),\, \hat\psi_n\geq 0, \,\|\hat\psi_n\|_{L^\infty(\Omega)}\leq C,\\
\dys \supp\, \hat\psi_n\subset \supp\, \psi_n\subset\subset \Omega,\\
\dys \hat\psi_n \rightarrow \psi \mbox{ in } H_0^1(\Omega) \mbox{ strongly and (for a susequence)  a.e. } x\in \Omega. 
\end{cases}
\end{align*}

For the moment let $n$ be fixed and let $\rho_\eta$ be a sequence of mollifiers. For $\eta$ sufficiently small the support of $\hat\psi_n\star \rho_\eta$ is included in a fixed compact $K_n$ of $\Omega$, and $\hat\psi_n\star \rho_\eta\in\mathcal{D}(\Omega)$, $\hat\psi_n\star \rho_\eta\geq 0$. We can therefore use $\phi=\hat\psi_n\star \rho_\eta$ as test function in \eqref{antefinalregbis}. We get
$$
\into A(x)Du^0 D(\hat\psi_n\star \rho_\eta)+\into u^0(\hat\psi_n\star \rho_\eta)d\mu=\into F(x,u^0)(\hat\psi_n\star \rho_\eta).
$$
 Let us pass to the limit in each term of this equation for $n$ fixed as $\eta$ tends to zero. In the right-hand side we use the facts that $F(x,u^0)\in L^1_{\mbox{{\tiny loc}}}(\Omega)$ (see \eqref{stimal1}), that $\supp \, (\hat\psi_n\star \rho_\eta)\subset K_n$, that $\|\hat\psi_n\star \rho_\eta\|_{{L^\infty}(\Omega)}
 \leq \|\hat\psi_n\|_{{L^\infty}(\Omega)}$ and the almost convergence of $\hat\psi_n\star \rho_\eta$ to $\hat\psi_n$,
 and we apply Lebesgue's dominated convergence Theorem. In the first term of the left-hand side we use the strong convergence of $\hat\psi_n\star \rho_\eta$ to $\hat\psi_n$ in $H_0^1(\Omega)$. This strong convergence also implies (for a subsequence) the quasi-everywhere convergence for the $H_0^1(\Omega)$ capacity and therefore the $\mu$-almost everywhere convergence of $\hat\psi_n\star \rho_\eta$ to $\hat\psi_n$; we use again Lebesgue's dominated convergence Theorem, this time in $L^1(\Omega;d\mu)$, together with the facts that (see \eqref{58bis})
$$
0\leq u^0(\hat\psi_n\star \rho_\eta)\leq u^0 \|\hat\psi_n\|_{L^\infty(\Omega;d\mu)}=u^0 \|\hat\psi_n\|_{L^\infty(\Omega)} \quad \mu\mbox{-a.e. } x\in\Omega
$$
and that (see \eqref{57bis2}) $u^0 \in L^1(\Omega;d\mu)$ in order
to pass to the limit in the second term of the left-hand side. We have proved that
\begin{equation}
\label{subfinalreg*}
\dys\int_{\Omega} A(x)Du^0 D\hat\psi_n+ \int_\Omega u^0 \hat\psi_n d\mu= \int_{\Omega} F(x,u^0)\hat\psi_n.
\end{equation}

We now pass to the limit in each term of \eqref{subfinalreg*} as $n$ tends to infinity. This is easy in the right-hand side by Lebesgue's dominated convergence Theorem since (for a susequence) $\hat\psi_n$ tends almost everywhere to $\psi$, since by the definition of $\hat\psi_n$
$$
0\leq F(x,u^0)\hat\psi_n\leq F(x,u^0)\psi \quad \mbox{ a.e. } x\in\Omega,
$$
and since the latest function belongs to $L^1(\Omega)$ (see \eqref{stimal1}).
This is also easy in the first term of the left-hand side of \eqref{subfinalreg*} since $\hat\psi_n$ tends to $\psi$ strongly in $H_0^1(\Omega)$. 
This strong convergence also implies (for a subsequence) the quasi-everywhere convergence for the $H_0^1(\Omega)$ capacity and therefore the $\mu$-almost everywhere convergence of $\hat\psi_n$ to $\psi$; we use again Lebesgue's dominated convergence Theorem in $L^1(\Omega;d\mu)$,
 together with the facts that (see \eqref{58bis})
$$
0\leq u^0 \hat\psi_n\leq u^0 \psi \leq u^0 \|\psi\|_{L^\infty(\Omega;d\mu)}=u^0 \|\psi\|_{L^\infty(\Omega)}\,\,\, \mu\mbox{-a.e. } x\in\Omega
$$
and that (see \eqref{57bis2}) $u_0 \in L^1(\Omega;d\mu)$ in order
to pass to the limit in the second term of the left-hand side.
We have proved that
\begin{equation}\label{subfinalreg}
\begin{cases}
\forall \psi \in H_0^1(\Omega)\cap L^\infty (\Omega) ,\, \psi \geq 0,\\
\dys\int_{\Omega} A(x)Du^0 D\psi+ \int_\Omega u^0 \psi d\mu= \int_{\Omega} F(x,u^0) \psi.
\end{cases}
\end{equation}

\bigskip
\noindent {\bf Eighth step}

Let us finally prove that $u^0\in L^2(\Omega;d\mu)$ and that \eqref{L_0} holds true.

Taking $\psi=T_n(u^0)\in H_0^1(\Omega)\cap L^\infty(\Omega)$ in \eqref{subfinalreg} we obtain 
\begin{equation*}
\dys\int_{\Omega} A(x)Du^0 DT_n(u^0)+ \int_\Omega u^0 T_n(u^0) d\mu= \int_{\Omega} F(x,u^0) T_n(u^0),
\end{equation*}
in which using the coercitivity \eqref{eq0.0} of $A$ and the growth condition (\ref{eq0.1} {\it iv}) on the function $F$, we obtain
\begin{equation}
\begin{cases}
\dys\into |T_n(u^0)|^2 d\mu\leq \into F(x,u^0)T_n(u^0)\leq \\
\dys\leq \into h(x)\big(\frac{1}{(u^0)^\gamma } + 1 \big)u^0 = 
\into h(x) ((u^0)^{(1 - \gamma)}+ u^0 )<+\infty,
\end{cases}
\end{equation}
which using Fatou's Lemma implies that
$$
u^0\in L^2(\Omega;d\mu).
$$

Fix now  $z\in H_0^1(\Omega)\cap L^2(\Omega;d\mu)$, $z\geq 0$. Taking $\psi = T_n(z)\in H_0^1(\Omega)\cap$ $\cap L^\infty(\Omega)$ as test function in \eqref{subfinalreg} we have
\begin{equation}
\label{921}
\into A(x)Du^0 DT_n(z)+\into u^0 T_n(z)d\mu=\into F(x,u^0)T_n(z).
\end{equation}

It is easy to pass to the limit in each term of the left-hand side of \eqref{921}, since $T_n(z)$ tends to $z$ in $H_0^1(\Omega)\cap L^2(\Omega;d\mu)$ 
and since $u^0\in L^2(\Omega;d\mu)$. 

Applying Fatou's Lemma to the right-hand side of \eqref{921}, we obtain
$$
\into F(x,u^0)z\leq \into A(x)Du^0Dz+\into u^0 zd\mu<+\infty,
$$
which is the first statement of \eqref{L_0}. 

But since $$0\leq F(x,u^0)T_n(z)\leq F(x,u^0)z,$$ and since the latest function belongs to $L^1(\Omega)$, Lebesgue's dominated convergence Theorem implies that
$$
\into F(x,u^0)T_n(z)\rightarrow \into F(x,u^0) z,
$$
which passing to the limit in \eqref{921} completes the proof the second statement of \eqref{L_0}.
\bigskip

The proof of Theorem~\ref{homogenization} is now complete.
\end{proof}

\bigskip

\begin{proof}[{\bf Proof of the corrector Theorem~\ref{corrector}}] $\mbox{}$
\medskip

\noindent {\bf First step}

In view of hypothesis \eqref{517} and of \eqref{cond2}, the function $w^\eps u^0$ belongs to $H_0^1(\Omega)\cap L^\infty(\Omega)$, and therefore the function $r^\eps$ defined by \eqref{518} belongs to $H_0^1(\Omega)$. By the coercivity assumption \eqref{eq0.0} and by the symmetry assumption \eqref{517a} on the matrix $A$, we have
\begin{align}
\label{920bis}
\begin{cases}\vspace{0.1cm}
\dys \alpha \into |D r^\eps|^2\leq \into A(x)Dr^\eps Dr^\eps=\\\vspace{0.1cm}
\dys= \into A(x) (D\tilde{u^\eps}-D(w^\eps u^0)) (D\tilde{u^\eps}-D(w^\eps u^0))=\\\vspace{0.1cm}
\dys = \into A(x) D\tilde{u^\eps} D\tilde{u^\eps} -2\into A(x)D\tilde{u^\eps} D(w^\eps u^0)\,+\\
\dys + \into A(x) D(w^\eps u^0)D(w^\eps u^0).
\end{cases}
\end{align}

We will pass to the limit in each term of the right-hand side of \eqref{920bis}.
\bigskip

\bigskip
\noindent {\bf Second step}

As far as the first term of the right-hand side of \eqref{920bis} is concerned, taking $u^\eps\in H_0^1(\oeps)$ as test function in \eqref{Leps} and extending $u^\eps$ and $F(x,u^\eps)$ by zero into $\tilde{u^\eps}$ and $ \widetilde{F(x,u^\eps)}$ (see \eqref{57bis} and \eqref{defex}), we get

\begin{eqnarray}
\label{c1}
\int_{\Omega}  A(x)D \tilde{u^\eps} D \tilde{u^\eps} = \into  \widetilde{F(x,u^\eps)}\tilde{u^\eps}. 
\end{eqnarray}

Let us pass to the limit in the right-hand side of \eqref{c1}.
By  (\ref{eq0.1} {\it iv})  we have
$$
0\leq F(x,u^\eps)u^\eps\leq h(x)\left(\frac{1}{(u^\eps)^\gamma}+1\right)u^\eps,
$$
which by \eqref{551} and by a  computation similar to the one made in \eqref{836n} implies that for every measurable set $E \subset \Omega$ one has
\begin{align*}
\dys 0\leq \int_E \widetilde{F(x,u^\eps)}\tilde{u^\eps}  
\leq \int_E h(x) \left(\frac{1}{(\tilde{u^\eps})^\gamma}+1\right) \tilde{u^\eps} 
\leq  c \, ( \|h\|_{L^{r}(E)} +  \|h\|_{L^{1}(E)} ),
\end{align*}
where $c$ is a constant which does not depend on $E$ nor on $\eps$, 
which in turn implies  the equi-integrability of $\widetilde{F(x,u^\eps)}\tilde{u^\eps}$.
Then convergences \eqref{ae} and \eqref{good} and Vitali's Theorem imply that
 \begin{equation}
 \label{c2}
  \int_{\Omega}  \widetilde{F(x,u^\eps)}\tilde{u^\eps}  \rightarrow \int_{\Omega} F(x,u^0) u^0.
\end{equation}

On the other hand, taking $z=u^0$ as test function in \eqref{L_0} it follows that
\begin{equation*}
\dys\into  A(x) Du^0 Du^0 + \into (u^0)^2 d\mu=  \int_{\Omega}F(x,u^0)u^0.
\end{equation*}
By \eqref{c1}, \eqref{c2} and the previous equality we have, using \eqref{57bis2} which holds true since $(u^0)^2\in H_0^1(\Omega)$ when $u^0\in L^\infty(\Omega)$,
 \begin{equation}
\label{c3}
\begin{cases}
\dys\int_{\oeps} A(x)D \tilde{u^\eps} D \tilde{u^\eps}\rightarrow \into A(x)  Du^0 D u^0 + \into (u^0)^2 d\mu=\\
\dys= \into A(x)  Du^0 D u^0 + \langle\mu,(u^0)^2\rangle_{H^{-1}(\Omega),H_0^1(\Omega)}.
\end{cases}
\end{equation}

\bigskip
\noindent {\bf Third step}

Let us now pass to the limit in the third term of the right-hand side of \eqref{920bis}. Using \eqref{cond5} we obtain 
\begin{align*}
\begin{cases}\vspace{0.1cm}
\dys\into A(x)D(w^\eps u^0)D(w^\eps u^0)=\\\vspace{0.1cm}
\dys =\into A(x)D(w^\eps u^0) Dw^\eps u^0+\into A(x) D(w^\eps u^0) Du^0w^\eps=\\\vspace{0.1cm}
\dys = \into {}^t\!A(x)Dw^\eps D(w^\eps (u^0)^2) -\into {}^t\!A(x)Dw^\eps Du^0 w^\eps u^0 \, + \\\vspace{0.1cm}
\dys +\into A(x)D(w^\eps u^0) Du^0 w^\eps=\\\vspace{0.1cm}
\dys= \langle\mu^\eps,w^\eps(u^0)^2\rangle_{H^{-1}(\Omega),H_0^1(\Omega)} -\into {}^t\!A(x)Dw^\eps Du^0 w^\eps u^0 \,+\\
\dys +\into A(x) D(w^\eps u^0) Du^0w^\eps,
\end{cases}
\end{align*}
in which it is easy to pass to the limit in each term, obtaining 
\begin{equation}
\label{942}
\begin{cases}
\dys \into A(x)D(w^\eps u^0)D(w^\eps u^0)\rightarrow\\
\dys \rightarrow\langle \mu,(u^0)^2\rangle_{H^{-1}(\Omega),H_0^1(\Omega)}+\into A(x) Du^0 Du^0.
\end{cases}
\end{equation}

\bigskip
\noindent {\bf Fourth step}

Passing to the limit in the second term of the right-hand side of \eqref{920bis} is a little bit more delicate 
(except in the case where the regularity hypothesis \eqref{nuch} is made on the function $F$, see Remark~\ref{rem9.1} below). 

Fix $\phi \in \mathcal{D}(\Omega)$ and write
\begin{equation}
\label{943}
\begin{cases}
\dys\into A(x) D\tilde{u^\eps} D(w^\eps u^0)=\into A(x)D\tilde{u^\eps}Du^0w^\eps+\\
\dys +\into A(x) D\tilde{u^\eps}D w^\eps\phi+\into A(x)D\tilde{u^\eps}Dw^\eps (u^0-\phi).
\end{cases}
\end{equation}

It is easy to pass to the limit in the first term of the right-hand side of \eqref{943}, obtaining 
\begin{equation}
\label{944}
\into A(x) D\tilde{u^\eps}Du^0 w^\eps\rightarrow \into A(x)Du^0Du^0.
\end{equation}

For what concerned the second term of the right-hand side of \eqref{943}, we have in view of \eqref{cond5}
\begin{align}
\label{944bis}
\begin{cases}
\dys\into A(x)D\tilde{u^\eps}Dw^\eps\phi=\into {}^t\!A(x)Dw^\eps D \tilde{u^\eps} \phi=\\
\dys = \into {}^t\!A(x)Dw^\eps D(\tilde{u^\eps}\phi)-\into {}^t\!A(x)Dw^\eps D\phi\, \tilde{u^\eps}=\\
\dys = \langle\mu^\eps,\tilde{u^\eps}\phi\rangle_{H^{-1}(\Omega),H_0^1(\Omega)}-\into {}^t\!A(x)Dw^\eps D\phi\, \tilde{u^\eps},
\end{cases}
\end{align} 
and therefore
\begin{equation}
\label{945}
\begin{cases}
\dys\into A(x) D\tilde{u^\eps}Dw^\eps\phi\rightarrow \langle \mu,u^0\phi\rangle_{H^{-1}(\Omega),H_0^1(\Omega)}=\\
\dys =\langle \mu,(u^0)^2\rangle_{H^{-1}(\Omega),H_0^1(\Omega)}+\langle \mu,u^0(\phi-u^0)\rangle_{H^{-1}(\Omega),H_0^1(\Omega)}.
\end{cases}
\end{equation}

\bigskip
\noindent {\bf Fifth step}

We now use $w^\eps(u^0-\phi)^2$ as test function in \eqref{cond5}. This gives
\begin{align*}
\begin{cases}
 \dys\into {}^t\!A(x)Dw^\eps Dw^\eps (u^0-\phi)^2+2 \into {}^t\!A(x)Dw^\eps D(u^0-\phi)(u^0-\phi)w^\eps=\\
 \dys =  \langle\mu^\eps,w^\eps(u^0-\phi)^2\rangle_{H^{-1}(\Omega),H_0^1(\Omega)},
\end{cases}
\end{align*}
which implies that
$$
 \into {}^t\!A(x)Dw^\eps Dw^\eps (u^0-\phi)^2\rightarrow \langle\mu,(u^0-\phi)^2\rangle_{H^{-1}(\Omega),H_0^1(\Omega)}.
$$
By the coercivity \eqref{eq0.0}, this implies that, for every $\phi\in\mathcal{D}(\Omega)$,
$$
\limsup_{\eps}\, \alpha \into|Dw^\eps|^2  |u^0-\phi|^2\leq \langle\mu,(u^0-\phi)^2\rangle_{H^{-1}(\Omega),H_0^1(\Omega)}.
$$

This result together with H\"older's inequality and 
the bound \eqref{551} on $\|\tilde{u^\eps}\|_{H_0^1(\Omega)}$ implies that for every $\phi\in\mathcal{D}(\Omega)$ 
\begin{align}
\label{946}
\begin{cases}
\dys\limsup_{\eps}\left|\into A(x)D\tilde{u^\eps}Dw^\eps(u^0-\phi)\right|\leq\\
\dys\leq \|A\|_{L^\infty(\Omega)^{N\times N}}\|\tilde{u^\eps}\|_{H_0^1(\Omega)}
\left( \limsup_{\eps} \into|Dw^\eps|^2  |u^0-\phi|^2 \right)^{1/2} \leq \\
\dys\leq \|A\|_{L^\infty(\Omega)^{N\times N}} \, C(|\Omega|,N,\alpha,\gamma, r)\,  \big( \|h\|_{L^{r}(\Omega)} + \|h\|_{L^{1}(\Omega)}^{1/2} \big) \\
\dys \hskip 4.5cm \left( \frac{1}{\alpha} \langle\mu,(u^0-\phi)^2\rangle_{H^{-1}(\Omega),H_0^1(\Omega)} \right)^{1/2} \leq \\
\dys \leq c \left(\langle\mu,(u^0-\phi)^2\rangle_{H^{-1}(\Omega),H_0^1(\Omega)}\right)^{1/2},
\end{cases}
\end{align}
where $C(|\Omega|,N,\alpha,\gamma, r)$ is the constant which appears in \eqref{num11bis}, 
and where the constant $c$ depends only on 
$|\Omega|$, $N$, $\alpha$, $ \|A\|_{L^\infty(\Omega)^{N\times N}}$, $ \gamma$, $ r$, $ \|h\|_{L^{r}(\Omega)}$ and $\|h\|_{L^{1}(\Omega)}$.


%
\bigskip
\noindent {\bf Sixth step}

Using in \eqref{920bis} the results obtained in \eqref{c3}, \eqref{942}, \eqref{943}, \eqref{944}, \eqref{945} and \eqref{946}, we have proved that for every $\phi\in\mathcal{D}(\Omega)$ one has 
\begin{align}
\label{948}
\begin{cases}
\dys\limsup_{\eps} \, \alpha \, \|r^\eps\|^2_{H_0^1(\Omega)}\leq \\
\dys \leq -2 \langle\mu,u^0(\phi-u^0)\rangle_{H^{-1}(\Omega),H_0^1(\Omega)}
+ \limsup_{\eps} \left( -2 \into A(x)D\tilde{u^\eps}Dw^\eps(u^0-\phi) \right) \leq \\
\dys \leq -2 \langle\mu,u^0(\phi-u^0)\rangle_{H^{-1}(\Omega),H_0^1(\Omega)} 
+ 2c \left( \langle\mu,(u^0-\phi)^2\rangle_{H^{-1}(\Omega),H_0^1(\Omega)} \right)^{1/2}.
\end{cases}
\end{align}

Since the sequence $z_n^2$ converges to $0$ strongly in $H_0^1(\Omega)$ when $z_n$ converges to $0$ strongly in $H_0^1(\Omega)$ and weakly-star in $L^\infty(\Omega)$,
approximating $u^0\in H_0^1(\Omega)\cap L^\infty(\Omega)$ by a sequence of functions $\phi\in\mathcal{D}(\Omega)$ which converges to $u^0$ strongly in $H_0^1(\Omega)$ and weakly-star in $L^\infty(\Omega)$ proves that 
$$
r^\eps\rightarrow 0 \mbox{ in } H_0^1(\Omega) \mbox{ strongly},
$$
i.e. \eqref{518}. 
\bigskip

Theorem~\ref{corrector} is proved.
\end{proof}

\begin{remark}\label{rem9.1}
The above proof of Theorem~\ref{corrector} has been made  assuming that  \eqref{517} holds true, namely that $u^0 \in L^\infty (\Omega)$. 
If we assume that  the function~$F$, in addition to assumption \eqref{car} and \eqref{eq0.1}, verifies the regularity condition \eqref{nuch}, the proof of the corrector 
Theorem~\ref{corrector} becomes simpler.

Indeed under this hypothesis, the solutions $\tilde u^\eps$ are bounded in $L^\infty (\Omega)$ (see Remark~\ref{rem55}) (a result which, by the way, implies \eqref{517}).
We claim that this $L^\infty (\Omega)$ bound on $\tilde u^\eps$ allows us to perform the computation in the fourth step above when we replace $\phi$ by $u^0$: 
indeed in this case the third term of the right-hand side of \eqref{943} vanishes
(and therefore the fifth step  becomes useless), and \eqref{944bis} becomes
\begin{align}
\label{944ter}
\begin{cases} \vspace{0.1cm}
\dys\into A(x)D\tilde{u^\eps}Dw^\eps \, u^0=\into {}^t\!A(x)Dw^\eps D \tilde{u^\eps} \,u^0=\\ \vspace{0.1cm}
\dys = \into {}^t\!A(x)Dw^\eps D(\tilde{u^\eps} u^0)-\into {}^t\!A(x)Dw^\eps Du^0 \, \tilde{u^\eps}=\\ \vspace{0.1cm}
\dys = \langle\mu^\eps,\tilde{u^\eps} u^0\rangle_{H^{-1}(\Omega),H_0^1(\Omega)}-\into {}^t\!A(x)Dw^\eps Du^0\, \tilde{u^\eps},
\end{cases}
\end{align}
where each term has a meaning since now $\tilde u^\eps \in L^\infty (\Omega)$. 
Using the fact that $\tilde u^\eps$ is bounded in $L^\infty (\Omega)$ allows us to pass to the limit in \eqref{944ter}, obtaining
$$
\dys\into A(x) D\tilde{u^\eps}Dw^\eps \, u^0 \rightarrow \langle \mu, (u^0)^2\rangle_{H^{-1}(\Omega),H_0^1(\Omega)}.
$$
Then \eqref{948} becomes
$$
\limsup_{\eps} \, \alpha\,  \|r^\eps\|^2_{H_0^1(\Omega)}\leq 0,
$$
which is nothing but the desired result \eqref{518}. 
\qed
\end{remark}

\vskip2cm

\bigskip
 \noindent \textbf{Acknowledgments}. The authors would like to thank their institutions
 for providing the support of reciprocal visits which allowed them to perform the present work. 
The work of Pedro J. Mart\'inez-Aparicio has been partially supported by the MINECO grant MTM2015-68210-P, by the grant FQM-116 of the Junta de Andaluc\'ia,  and by the Programa de Apoyo a la Investigaci\'on 19461/PI/14 of the Fundaci\'on S\'eneca-Agencia de Ciencia y Tecnolog\'{\i}a de la Regi\'on de Murcia.

\vskip 2cm
 
 \noindent {\bf References}
 
 \medskip

\end{document}